\documentclass[11pt]{amsproc}
\usepackage[mathcal]{euscript}
\usepackage{amsmath,amssymb,amsfonts,amsthm}
\usepackage{latexsym,bm,graphicx}
\usepackage{mathrsfs}
\usepackage[all]{xy}
\usepackage{extarrows}
\usepackage{enumitem}
\usepackage{setspace}
\usepackage[verbose,colorlinks=true,linktocpage=true,linkcolor=blue,citecolor=blue]{hyperref}
\usepackage{footnote}
\usepackage{tikz}
\usepackage[title]{appendix}
\usepackage{indentfirst}
\usepackage{tikz-cd}

\newtheorem{theorem}{Theorem}[section]
\newtheorem{lemm}[theorem]{Lemma}
\newtheorem{prop}[theorem]{Proposition}
\theoremstyle{definition}
\newtheorem{defi}[theorem]{Definition}

\newtheorem{coro}[theorem]{Corollary}
\theoremstyle{remark}
\newtheorem{remark}[theorem]{Remark}
\newtheorem*{clm*}{Claim}

\numberwithin{equation}{section}

\usetikzlibrary{graphs,matrix}
\usepgflibrary{arrows}

\begin{document}

\title[Affine Yangians as Limits of Quantum Toroidal Algebras]
{Affine Yangians as Limits of Quantum Toroidal Algebras}

\author[L. Bezerra, I. Kashuba, and H. Lin]{Luan Bezerra,  Iryna Kashuba, and Hongda Lin}

\address[bezerra.luan@gmail.com ]{\textsc{Luan Pereira Bezerra}: Instituto de Ci{\^e}ncias Exatas, Universidade Federal de Minas Gerais, Belo Horizonte, Brazil}

\address[kashuba@sustech.edu.cn]{\textsc{Iryna Kashuba}: Shenzhen International Center for Mathematics, Southern University of Science and Technology, Shenzhen, P.R. China}

\address[linhd@sustech.edu.cn]{\textsc{Hongda Lin}: Shenzhen International Center for Mathematics, Southern University of Science and Technology, Shenzhen, P.R. China}

\subjclass[2020]{17B37, 17B65, 17B67}

\keywords{Kac-Moody Lie algebras; Quantum toroidal algebras; Affine Yangians; PBW basis}
\begin{abstract}
We establish a degeneration isomorphism between quantum toroidal algebras
and untwisted affine Yangians, valid for all untwisted affine Kac-Moody Lie algebras.
Specifically, we prove that the affine Yangian $Y_\hbar(\mathfrak{g})$ is isomorphic, as a
$\mathbb{C}[\hbar]$-algebra, to the associated graded algebra of the quantum toroidal algebra
$U_\hbar(\mathfrak{g}^{\mathrm{tor}})$ with respect to a canonical filtration. This result
constitutes the affine analogue of Drinfeld's conjecture on the relationship between Yangians
and quantum loop algebras, previously established in the finite-dimensional setting by Gautam--Toledano Laredo
and by Guay--Ma.
As principal applications of this isomorphism, we derive two fundamental structural properties
of affine Yangians: a Poincar\'e--Birkhoff--Witt (PBW) basis for $Y_\hbar(\mathfrak{g})$ in all
untwisted affine types, and the identification of its classical limit as the universal enveloping
algebra $U(\mathfrak{g}[u])$ of the polynomial current Lie algebra. A key ingredient of independent
interest is our construction of a PBW basis for $U_\hbar(\mathfrak{g}^{\mathrm{tor}})$ itself,
which relies on a new torsion-freeness argument for the quantum toroidal algebra
and the topological Nakayama lemma.
\end{abstract}
\maketitle

\section{Introduction}

\subsection*{Background and motivation}

Over the past four decades, quantum groups have emerged as one of the central structures
in modern mathematics, with deep connections to integrable systems, low-dimensional topology,
geometric representation theory, and mathematical physics. Among the most prominent families
of quantum groups are \emph{Yangians} and \emph{quantum Kac-Moody algebras}, both of which
arise as $\hbar$-adic deformations of universal enveloping algebras of classical Lie algebras.
Their precise relationship, particularly the sense in which Yangians appear as ``classical
limits'' of quantum affine algebras, has been a fundamental question in the field since
Drinfeld's foundational works \cite{Dr86, Dr87}.

In his 1987 ICM address \cite{Dr86}, Drinfeld announced two conjectures.
The first asserts the existence of a ``Drinfeld realization'' for both Yangians and quantum
affine algebras in terms of current generators, equivalent to their original Drinfeld--Jimbo
presentations \cite{Dr85, J85}. The second states that the Yangian $Y_\hbar(\mathfrak{g})$
of a finite-dimensional simple Lie algebra $\mathfrak{g}$ arises as the associated graded
algebra of the corresponding quantum loop algebra $U_\hbar(L\mathfrak{g})$ with respect to
a natural filtration induced by setting the loop parameter to $1$. Both conjectures have since
been proved: the former by Beck \cite{Be94} and Damiani \cite{Da15} (for untwisted and twisted
types, respectively) using Lusztig's braid group action \cite{L89}, and independently by
Jing \cite{Ji98-2} via $q$-brackets; the latter by Gautam--Toledano Laredo \cite{GT13} and,
independently and by different methods, by Guay--Ma \cite{GM12}. 

Quantum toroidal algebras, introduced by Jing \cite{Ji98-1} as quantum affinizations of
quantum affine algebras, form the natural ``double-affine'' counterparts of quantum loop
algebras. They have attracted considerable attention due to their rich representation theory
\cite{FJW00,He05,He09,Xi20} and their deep connections to double affine Hecke algebras
\cite{La24,Mi99,VV96}. The corresponding ``double-affine'' counterpart of Yangians,
known as \emph{affine Yangians}, was introduced and studied by Guay \cite{Gu07} for type
$A$, and subsequently developed in full generality by Guay--Nakajima--Wendlandt \cite{GNW18}.
Affine Yangians share many structural features with quantum toroidal algebras, including
PBW bases \cite{Ne24,YZ20}, classical limits \cite{Ts17}, and braid group actions
\cite{GRW19,Ko19}. However, the precise relationship between the two families, beyond the
well-understood type $A$ case, has remained an open problem.

\subsection*{Main results}

The central result of this paper resolves this problem by establishing the affine analogue
of Drinfeld's second conjecture:

\begin{theorem}[Theorem \ref{thm:main}]
Let $\mathfrak{g}$ be an untwisted affine Kac-Moody Lie algebra. There exists an explicit
isomorphism of $\mathbb{C}[\hbar]$-algebras
\[
\Pi \colon Y_\hbar(\mathfrak{g}) \xrightarrow{\;\sim\;} \mathrm{gr}_K\, U_\hbar(\mathfrak{g}^{\mathrm{tor}}),
\]
where $K$ is the kernel of the composite homomorphism
\[
U_\hbar(\mathfrak{g}^{\mathrm{tor}}) \twoheadrightarrow
U_\hbar(\mathfrak{g}^{\mathrm{tor}})/\hbar U_\hbar(\mathfrak{g}^{\mathrm{tor}})
\xrightarrow{\;\sim\;} U(\mathfrak{g}^{\mathrm{tor}})
\xrightarrow{\;t \mapsto 1\;} U(\mathfrak{g}),
\]
and $\mathrm{gr}_K$ denotes the associated graded algebra for the $K$-adic filtration.
\end{theorem}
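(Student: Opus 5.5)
The plan follows the strategy of Gautam--Toledano Laredo and Guay--Ma, adapted to the toroidal setting. There are three stages: construct $\Pi$ on generators and check the relations, show $\Pi$ is surjective, and then prove injectivity by comparing PBW-type spanning sets.

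\emph{Constructing $\Pi$.} Write $\mathcal{U}=U_\hbar(\mathfrak{g}^{\mathrm{tor}})$ in its Drinfeld (current) presentation with generators $x^\pm_{i,k},h_{i,r},k_i^{\pm1}$, and let $K\subset\mathcal{U}$ be the kernel in the statement. In $\mathrm{gr}_K\mathcal{U}=\bigoplus_n K^n/K^{n+1}$, the element $\hbar$ lies in $K$ and so has degree one. The elements $x^\pm_{i,1}-x^\pm_{i,0}$ and $\hbar^{-1}$-normalized versions of $h_{i,1}$ also lie in $K$. I define $\Pi$ on the Drinfeld generators of $Y_\hbar(\mathfrak{g})$ (those of Guay--Nakajima--Wendlandt) by
\[
\Pi(x^\pm_{i,r})=\overline{(x^\pm_{i,1}-x^\pm_{i,0})^{r}\,x^\pm_{i,0}}\,,\qquad
\Pi(\xi_{i,r})=\text{class of a suitable combination of } h_{i,s},\ s\le r+1,
\]
taken in $K^r/K^{r+1}$. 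More precisely, I would use the GT-style exponential change of variables $x^\pm_{i}(e^{u})$ evaluated at $t=1$, so that $\Pi$ sends $x^\pm_{i,r}$ to the leading symbol of $\sum_k\binom{r}{k}(-1)^{r-k}x^\pm_{i,k}$.

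I then verify the affine Yangian relations in $\mathrm{gr}_K\mathcal{U}$. The relations $[\xi,\xi]$, $[\xi,x^\pm]$, $[x^+,x^-]$ and Serre follow by reducing the $q$-commutator relations of $\mathcal{U}$ modulo higher filtration terms, with $q-q^{-1}\equiv 2\hbar$ and the $q$-numbers becoming ordinary numbers. This is a finite leading-order computation, rank one or rank two at a time, so I would reuse the finite-type computations of Guay--Ma verbatim. The only new case is the affine node together with the central and degree elements, where the cubic correction terms in the GNW relations have to be matched.

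\emph{Surjectivity.} $K$ is generated by $\hbar$ together with the images of the generators of the ideal $(t-1)$ in $U(\mathfrak{g}^{\mathrm{tor}})$, so each graded piece $K^n/K^{n+1}$ is spanned by products of the symbols above. These are exactly the images of the Yangian generators, which gives surjectivity.

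\emph{Injectivity.} This is the main obstacle. The plan is to establish a PBW basis of $\mathcal{U}$ as an $\hbar$-adically complete (or flat) $\mathbb{C}[[\hbar]]$-module. First, show that $\mathcal{U}$ is torsion-free, so that $\mathcal{U}/\hbar\mathcal{U}\cong U(\mathfrak{g}^{\mathrm{tor}})$. Second, lift a PBW basis of $U(\mathfrak{g}^{\mathrm{tor}})$, obtained from root vectors of the toroidal Lie algebra including imaginary ones, to ordered monomials in $\mathcal{U}$. The topological Nakayama lemma then shows that these monomials span, and torsion-freeness shows that they are independent. From this I get that $\mathrm{gr}_K\mathcal{U}$ is a free $\mathbb{C}[\hbar]$-module whose reduction mod $\hbar$ is $\mathrm{gr}_{(t-1)}U(\mathfrak{g}^{\mathrm{tor}})\cong U(\mathfrak{g}[u])$.

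On the Yangian side, the ordered monomials in lifted root vectors span $Y_\hbar(\mathfrak{g})$ by a standard filtration argument. Their images under $\Pi$ are the PBW monomials of $\mathrm{gr}_K\mathcal{U}$, which are linearly independent. Hence $\Pi$ is injective, and as by-products $Y_\hbar(\mathfrak{g})$ has a PBW basis and $Y_\hbar/\hbar Y_\hbar\cong U(\mathfrak{g}[u])$.

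The hardest step is the torsion-freeness of $\mathcal{U}$, because the toroidal algebra lacks a triangular decomposition with known PBW on each piece. I would try first to construct a faithful enough family of representations, for example vertex or shuffle-type representations, that are flat over $\mathbb{C}[[\hbar]]$ and detect the classical PBW monomials. Failing that, I would fall back on the Lusztig braid group action on the horizontal quantum affine subalgebra to produce real root vectors and control them.
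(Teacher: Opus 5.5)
\textbf{Gap: the $K$-adic and $\hbar$-adic filtrations.} Much of your outline matches the paper: the alternating sums $X^{(\pm,m)}_{i,0}$, $H^{(m)}_{i,0}$ as images, the leading-order check of relations, and a Yangian spanning set whose image is shown independent. Your surjectivity argument is sound. $\kappa$ is generated as a two-sided ideal by $e_i\otimes(t-1)$ and $f_i\otimes(t-1)$. Hence $K$ is $\hbar\,\mathcal U$ plus the ideal generated by their lifts, and $\mathrm{gr}_K\mathcal U$ is generated by its degree-$0$ part, $\bar\hbar$, and the classes of $X^{(\pm,1)}_{i,0}$. (Discard your first formula $\overline{(x^\pm_{i,1}-x^\pm_{i,0})^r x^\pm_{i,0}}$: it is a product of $r+1$ factors, not a lift of $e_i\otimes(t-1)^r$.)

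The genuine gap is the sentence ``From this I get that $\mathrm{gr}_K\mathcal U$ is a free $\mathbb{C}[\hbar]$-module whose reduction mod $\hbar$ is $\mathrm{gr}_{(t-1)}U(\mathfrak g^{\mathrm{tor}})$.'' Flatness of $\mathcal U$ does not imply this. What you need is the compatibility $K^p\cap\hbar\,\mathcal U=\hbar K^{p-1}$ for all $p\ge 1$. That is what makes $\bar\hbar$ a non-zero-divisor and identifies the kernel of $\mathrm{gr}_K\mathcal U\to\mathrm{gr}_\kappa U(\mathfrak g^{\mathrm{tor}})$ with $\bar\hbar\,\mathrm{gr}_K\mathcal U$.

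This compatibility fails for general flat deformations. Take the Weyl algebra $\mathbb{C}[x,y][[\hbar]]$ with $xy-yx=\hbar$ and $\kappa=(x,y)$. The monomials $x^ay^b$ form a basis, yet $\hbar=xy-yx\in K^2$, so $\bar\hbar=0$ in $K/K^2$.

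The missing idea is to control how the $\hbar$-corrections of the multiplication lower the $\kappa$-adic order. Transport the product to $U(\mathfrak g^{\mathrm{tor}})[[\hbar]]$ and write it as $a*b=ab+\sum_{k\ge 1}\hbar^k\Omega_k(a,b)$. You need $\Omega_k(\kappa^s,\kappa^t)\subseteq\kappa^{\max(s+t-k,0)}$. In the paper this is Lemma~\ref{lem:bk-stability}. It yields $K^p=\Phi^{-1}(\mathcal F^p)$ (Proposition~\ref{prop:cF-multiplicative}), hence Corollary~\ref{cor:hbar-nzd}. Only then does independence modulo $\bar\hbar$ (detected in $U(\mathfrak g[u])$ via Proposition~\ref{prop:non-quantum}) lift to $\mathbb{C}[\hbar]$-independence, as in Stage~2 of Step~(VI).

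\textbf{Gap: torsion-freeness.} The flatness input itself is not supplied. You identify torsion-freeness of $\mathcal U$ as the hardest step and name two possible routes, but carry out neither. The braid group action by itself gives root vectors and spanning, not independence. The paper's route is the saturation property $\mathcal I_\hbar\cap\hbar\mathcal A=\hbar\mathcal I_\hbar$ combined with Lemma~\ref{lem:top-nakayama}. Note, however, that the base step of its Claim passes from $\hbar a\in\mathcal I_\hbar$ to $\bar a\in\bar{\mathcal I}$, which is essentially the statement being proved. So your caution is justified, but your proposal still owes an actual argument here.

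\textbf{A smaller point.} The ``new case'' you anticipate does not arise. Relations \eqref{eq:Y1}--\eqref{eq:Y6} have the same form for all $i,j\in I$, with no degree element and no cubic corrections, and the paper verifies them uniformly in Steps~(I)--(IV).
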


This isomorphism realizes the affine Yangian as a limit form of the quantum toroidal
algebra, in analogy with the finite-dimensional case of Guay--Ma \cite{GM12}. The crucial
challenge in the affine setting, absent in the finite-dimensional case, is that the quantum
toroidal algebra is not known a priori to be a flat deformation of its classical limit; this
flatness, which is essential for the injectivity of $\Pi$ must be established as part of
the proof (Corollary \ref{cor:flat}).

As immediate applications of Theorem \ref{thm:main}, we derive the following:

\begin{coro}[Corollary \ref{cor:PBW-yangian}]
The set $B_\hbar(Y, \prec)$ of ordered monomials in the generators
$\{x^\pm_{\alpha,m},\, h_{i,m} \mid \alpha \in R^+,\, i \in I,\, m \in \mathbb{Z}_+\}$
of $Y_\hbar(\mathfrak{g})$ forms a $\mathbb{C}[\hbar]$-basis of $Y_\hbar(\mathfrak{g})$,
for any untwisted affine type.
\end{coro}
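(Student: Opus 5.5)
The plan is to deduce the PBW property of $Y_\hbar(\mathfrak{g})$ from the isomorphism $\Pi$ of Theorem \ref{thm:main}, combined with a PBW basis of the quantum toroidal algebra $U_\hbar(\mathfrak{g}^{\mathrm{tor}})$ that is adapted to the $K$-adic filtration. We split the claim into spanning and linear independence.

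\emph{Spanning.} We show directly from the defining relations of $Y_\hbar(\mathfrak{g})$ that ordered monomials span. Define the root vectors $x^\pm_{\alpha,m}$ for $\alpha\in R^+$ by iterated commutators of the $x^\pm_{i,m}$, including imaginary root vectors with multiplicity. Filter $Y_\hbar(\mathfrak{g})$ by total degree in the generators. By the commutation relations, a commutator of two generators equals a linear combination of generators plus $\hbar$ times terms of lower filtration degree; here the spectral degree $m$ is additive modulo lower terms. A standard straightening induction on the length of monomials and the number of inversions with respect to $\prec$ then shows that ordered monomials span. Alternatively, spanning can be obtained by pulling back through $\Pi$ once independence is known and the graded pieces are compared.

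\emph{Independence.} Apply $\Pi$. Under $\Pi$ the generator $x^\pm_{\alpha,m}$ is sent to the class in $K^m/K^{m+1}$ of an element of the form $(t-1)^m$-type twisted root vectors $X^\pm_{\alpha,m}$ in $U_\hbar(\mathfrak{g}^{\mathrm{tor}})$. Likewise $h_{i,m}$ is sent to suitable combinations of the $H_{i,k}$. We use the PBW basis of $U_\hbar(\mathfrak{g}^{\mathrm{tor}})$ established in the paper, which rests on the torsion-freeness of $U_\hbar(\mathfrak{g}^{\mathrm{tor}})$ and the topological Nakayama lemma. This basis can be rechosen via the triangular change of variables $t^k=\sum_m\binom{k}{m}(t-1)^m$ so that it consists of ordered monomials in elements of $K$-degree $m$. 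Flatness (Corollary \ref{cor:flat}) gives $\mathrm{gr}_K U_\hbar(\mathfrak{g}^{\mathrm{tor}})$ as a free $\mathbb{C}[\hbar]$-module whose basis is the images of these monomials, with filtration degree equal to the sum of the $m$'s. This parallels the classical fact that $\mathrm{gr}$ of $U(\mathfrak{g}[t^{\pm1}])$ with respect to the $(t-1)$-adic filtration is $U(\mathfrak{g}[u])$. Hence $\Pi$ maps $B_\hbar(Y,\prec)$ bijectively onto a basis, so the set is linearly independent, and spanning also follows since $\Pi$ is an isomorphism.

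\emph{Main obstacle.} The hardest step is verifying that the leading $K$-adic symbols of the rechosen PBW monomials in $U_\hbar(\mathfrak{g}^{\mathrm{tor}})$ are linearly independent in $\mathrm{gr}_K$. In particular, we must check that no cancellation drops them into $K^{n+1}$. I would first reduce modulo $\hbar$, where $\mathrm{gr}_K$ becomes $U(\mathfrak{g}[u])$ and classical PBW applies. I would then lift to $\mathbb{C}[\hbar]$ using $\hbar$-torsion-freeness of $\mathrm{gr}_K$, which comes from flatness, together with a graded Nakayama argument. The imaginary root vectors need care, since their multiplicities must match those of $\mathfrak{g}$.
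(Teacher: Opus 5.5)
\textbf{There is a genuine gap in the lifting step of your independence argument.}

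Your outline has the same architecture as the paper's proof. Spanning is as in Proposition~\ref{prop:spanning}. Independence goes by applying $\Pi$, reducing modulo $\bar\hbar$ to $\mathrm{gr}_\kappa U(\mathfrak{g}^{\mathrm{tor}})\cong U(\mathfrak{g}[u])$, and then lifting.

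The failing step is your claim that $\bar\hbar$-torsion-freeness of $\mathrm{gr}_K U_\hbar(\mathfrak{g}^{\mathrm{tor}})$ comes from flatness (Corollary~\ref{cor:flat}). The same goes for its freeness over $\mathbb{C}[\hbar]$ on the expected monomials.
\begin{itemize}
\item Flatness only says that $\hbar$ is a non-zero-divisor on $U_\hbar(\mathfrak{g}^{\mathrm{tor}})$ itself.
\item What you need is the filtered statement: $x\in K^p$ and $\hbar x\in K^{p+2}$ force $x\in K^{p+1}$.
\item That depends on how the quantum corrections to the product interact with the $\kappa$-adic filtration, and flatness says nothing about this.
\end{itemize}
A counterexample: the $\hbar$-adic Weyl algebra $A=\mathbb{C}[x,y][[\hbar]]$ with $xy-yx=\hbar$ is flat. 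Take $K=\ker(A\to\mathbb{C}[x,y]\to\mathbb{C})$, evaluation at $x=y=0$. Then $\hbar=xy-yx\in K^2$, so $\bar\hbar=0$ in $\mathrm{gr}_K A$.

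The missing idea is the one the paper supplies. It is the bound $\Omega_k(\kappa^s,\kappa^t)\subseteq\kappa^{\max(s+t-k,0)}$ on the $\hbar^k$-corrections of the transported product (Lemma~\ref{lem:bk-stability}). This bound fails in the Weyl example: ordering $y$ before $x$, one gets $\Omega_1(x,y)=1\notin\kappa=(x,y)$.
\begin{itemize}
\item The bound makes $(\mathcal{F}^p)$ multiplicative and gives the explicit description $K^p=\Phi^{-1}(\mathcal{F}^p)$ (Proposition~\ref{prop:cF-multiplicative}).
\item From this description the non-zero-divisor property (Corollary~\ref{cor:hbar-nzd}) is a one-line check.
\item Stage~2 of Step~(VI) then uses it to cancel $\bar\hbar^T$.
\end{itemize}
Your ``graded Nakayama argument'' cannot replace this step. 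Nakayama lifts spanning from the reduction modulo $\bar\hbar$, not linear independence.

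\textbf{A secondary problem: the re-chosen basis of $U_\hbar(\mathfrak{g}^{\mathrm{tor}})$.} Re-choosing the PBW basis via $t^k=\sum_m\binom{k}{m}(t-1)^m$ does not work as stated. For $k<0$ the expansion is infinite and carries no powers of $\hbar$, so it does not converge $\hbar$-adically. Hence ordered monomials in the $X^{(\pm,m)}_{\beta,0}$ and $H^{(m)}_{i,0}$ with $m\ge 0$ cannot form a basis of $U_\hbar(\mathfrak{g}^{\mathrm{tor}})$. Already classically, the $(t-1)^m$ with $m\ge 0$ span only $\mathbb{C}[t]$, not $\mathbb{C}[t^{\pm1}]$.

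This step is also unnecessary. The paper never uses a filtered basis of $U_\hbar(\mathfrak{g}^{\mathrm{tor}})$. It only needs two things:
\begin{itemize}
\item the images of the ordered monomials under $\bar\psi\colon\mathrm{gr}_K U_\hbar(\mathfrak{g}^{\mathrm{tor}})\to\mathrm{gr}_\kappa U(\mathfrak{g}^{\mathrm{tor}})\cong U(\mathfrak{g}[u])$ are distinct PBW monomials up to nonzero scalars (Stage~1);
\item the non-zero-divisor property of $\bar\hbar$ discussed above.
\end{itemize}
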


\begin{coro}[Corollary \ref{cor:classical-limit}]
There is an isomorphism $\mathrm{gr}\, Y_\hbar(\mathfrak{g}) \cong U(\mathfrak{g}[u])$,
and $Y_\hbar(\mathfrak{g}) \cong U(\mathfrak{g}[u])[\hbar]$ as $\mathbb{C}[\hbar]$-modules.
\end{coro}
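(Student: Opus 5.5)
We deduce the statement from the main theorem together with the PBW basis for $U_\hbar(\mathfrak{g}^{\mathrm{tor}})$ and its flatness (Corollary \ref{cor:flat}). The isomorphism $Y_\hbar(\mathfrak{g}) \cong \mathrm{gr}_K U_\hbar(\mathfrak{g}^{\mathrm{tor}})$ is one of $\mathbb{C}[\hbar]$-algebras, so it suffices to understand $\mathrm{gr}_K U_\hbar(\mathfrak{g}^{\mathrm{tor}})$ as a graded $\mathbb{C}[\hbar]$-module.

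\textbf{Step 1: associated graded of the quantum toroidal algebra.} The degree-one part of the filtration is $K=\hbar U_\hbar(\mathfrak{g}^{\mathrm{tor}})+\langle$lifts of $(t-1)\mathfrak{g}[t^{\pm1}]\rangle$. The toroidal PBW basis consists of ordered monomials in root vectors $x^\pm_{\alpha,k}$ and $h_{i,k}$, $k\in\mathbb{Z}$. We reparametrize it by the elements $X^\pm_{\alpha,m}:=\sum_{j}\binom{m}{j}(-1)^{m-j}x^\pm_{\alpha,j}$, with an analogous definition for $h$. These are lifts of $(t-1)^m x_\alpha$, and they lie in $K^m$. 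A change of basis of the loop variable, from $\{t^k\}$ to $\{(t-1)^m t^{\ell}\}$, and then after completion to $\{(t-1)^m\}$, shows that ordered monomials in the $X$'s, weighted by $\hbar^a$, give a basis adapted to the $K$-adic filtration. To prove that their symbols are linearly independent in $\mathrm{gr}_K$, we reduce mod $\hbar$ using flatness. Modulo $\hbar$ the filtration becomes the $(t-1)$-adic filtration of $U(\mathfrak{g}[t^{\pm1}])$, and its associated graded is $U(\mathfrak{g}[u])$ by the classical PBW theorem, with $u$ the symbol of $t-1$. We then lift back along $\hbar$, which is a regular element of homogeneous degree $1$.

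\textbf{Step 2: transfer the basis.} Under $\Pi$ the generators $x^\pm_{\alpha,m}$ and $h_{i,m}$ of $Y_\hbar(\mathfrak{g})$ map to the symbols of $X^\pm_{\alpha,m}$ and $H_{i,m}$, up to lower-order corrections. These corrections must be handled for the real root vectors $x_\alpha$ with $\alpha\in R^+$ defined by brackets. We handle them by a triangularity argument with respect to the order $\prec$ and the filtration degree. This shows that $\Pi(B_\hbar(Y,\prec))$ is a basis of $\mathrm{gr}_K$, which is Corollary \ref{cor:PBW-yangian}.

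\textbf{Step 3: classical limit.} Consider the filtration of $Y_\hbar(\mathfrak{g})$ by degree with $\deg x_{\alpha,m}=\deg h_{i,m}=m$ and $\deg \hbar=1$, or alternatively the quotient $Y_\hbar/\hbar Y_\hbar$. The defining relations reduce at $\hbar=0$ to those of $U(\mathfrak{g}[u])$, which gives a surjection $U(\mathfrak{g}[u])\twoheadrightarrow Y_\hbar/\hbar Y_\hbar$. The PBW basis shows that the images of a PBW basis of $U(\mathfrak{g}[u])$ are independent, so the surjection is an isomorphism. Since the PBW basis is free over $\mathbb{C}[\hbar]$, we get $Y_\hbar(\mathfrak{g})\cong U(\mathfrak{g}[u])[\hbar]$ as $\mathbb{C}[\hbar]$-modules.

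\textbf{Main obstacle.} The delicate point is Step 1. We must show that the $K$-adic filtration on the toroidal PBW basis is exactly the filtration by total degree in $\hbar$ and $(t-1)$. This requires care for the imaginary root vectors, and it requires that commutators of lifts do not drop filtration degree unexpectedly. We would first try to verify this via the mod-$\hbar$ reduction combined with torsion-freeness of $\mathrm{gr}_K$ in $\hbar$, which follows from flatness.
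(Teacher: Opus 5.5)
Your Step~3 is essentially the paper's proof. The paper takes the PBW basis of Corollary~\ref{cor:PBW-yangian}, notes that it consists of monomials of definite degree, and reads off $\mathrm{gr}\,Y_\hbar(\mathfrak{g})$ and the module isomorphism.

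\textbf{The gap in Steps~1--2.} These steps re-derive Corollary~\ref{cor:PBW-yangian}, which you could simply cite. As written, they break exactly at the point you call the main obstacle. You propose to get regularity of $\bar\hbar$ on $\mathrm{gr}_K U_\hbar(\mathfrak{g}^{\mathrm{tor}})$ from flatness, and this does not follow. Flatness (Corollary~\ref{cor:flat}) says only that $\hbar$ is a non-zero-divisor on $U_\hbar(\mathfrak{g}^{\mathrm{tor}})$. Regularity of $\bar\hbar\in K/K^2$ says that $x\in K^p$ and $\hbar x\in K^{p+2}$ force $x\in K^{p+1}$. That is a compatibility between the $\hbar$-adic and $K$-adic filtrations, and it depends on how products of elements of $K$ produce multiples of $\hbar$.

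Here is a flat example where it fails. Take $\mathbb{C}\langle x,y\rangle[[\hbar]]$ modulo the closed ideal generated by $xy-yx-\hbar$ (the completed Weyl algebra). It is topologically free with basis $\{x^ay^b\}$. Let $K$ be the kernel of $x,y,\hbar\mapsto 0$. Then $\hbar=[x,y]\in K^2$, so $\bar\hbar\cdot\bar 1=0$ in $\mathrm{gr}_K$ although $\bar 1\neq 0$.

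In the paper, the statement you need ("the $K$-adic filtration is the filtration by total degree in $\hbar$ and $t-1$") is Proposition~\ref{prop:cF-multiplicative}. It rests on a separate estimate, Lemma~\ref{lem:bk-stability}: the $\hbar^k$-coefficient $\Omega_k$ of the star product lowers $\kappa$-adic order by at most $k$. Flatness is used only to set up the star product. Corollary~\ref{cor:hbar-nzd} then lifts linear independence from $\mathrm{gr}_\kappa$ back to $\mathrm{gr}_K$ (Stage~2 of Step~(VI)). So either invoke Corollary~\ref{cor:hbar-nzd} at that point, or drop Steps~1--2 and cite Corollary~\ref{cor:PBW-yangian} directly.

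\textbf{Step~3.} Your two alternatives are not interchangeable.
\begin{itemize}
\item With $\deg\hbar=1$, every relation \eqref{eq:Y1}--\eqref{eq:Y6} is homogeneous. So $Y_\hbar(\mathfrak{g})$ is graded and its associated graded is $Y_\hbar(\mathfrak{g})$ itself, not $U(\mathfrak{g}[u])$.
\item The paper's $\mathrm{gr}$ (Section~\ref{sec:yangian-def}) puts $\hbar$ in degree $0$, since the $Y^p_\hbar$ are $\mathbb{C}[\hbar]$-spans. Then the right-hand sides of \eqref{eq:Y4}--\eqref{eq:Y5} drop out. A $\mathbb{C}[\hbar]$-basis of monomials of definite degree gives $\mathrm{gr}$ as the statement read over $\mathbb{C}[\hbar]$, namely $U(\mathfrak{g}[u])\otimes\mathbb{C}[\hbar]$.
\item Your quotient $Y_\hbar/\hbar Y_\hbar$ gives $U(\mathfrak{g}[u])$ on the nose once the PBW basis is known. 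It identifies the quotient rather than $\mathrm{gr}$, so you still need the one-line remark above.
\end{itemize}

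\textbf{Direction of the map.} A map $U(\mathfrak{g}[u])\to Y_\hbar/\hbar Y_\hbar$ exists only if the $\hbar=0$ relations actually \emph{present} $\mathfrak{g}[u]$. For affine $\mathfrak{g}$ this is not automatic, since a priori they present a central extension; the paper assumes it through $\varpi$. The map you get for free goes the other way, $Y_\hbar/\hbar Y_\hbar\twoheadrightarrow U(\mathfrak{g}[u])$. To show that one is an isomorphism you need spanning of the source by ordered monomials (Proposition~\ref{prop:spanning} or Corollary~\ref{cor:PBW-yangian}) together with the PBW theorem for $U(\mathfrak{g}[u])$, not independence of images in the target.
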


Before this work, both results were known only in special cases: the PBW theorem 
in type $A$ \cite{Gu07, YZ20} and for simply-laced affine types \cite{GRW19, YZ20}, 
and the identification of the classical limit only in type $A$ \cite{YZ20}. The 
identification with $U(\mathfrak{g}[u])$, conjectured in \cite{GNW18}, is established 
here for the first time in all non-type-$A$ untwisted affine types, and the PBW 
theorem for $Y_\hbar(\mathfrak{g})$ in full generality is new.

An essential prerequisite of independent interest is the following.

\begin{theorem}[Theorem \ref{thm:PBW-toroidal}]
The set $B_\hbar(U, \prec)$ of ordered monomials in the quantum toroidal root vectors
$\{X^\pm_{\beta,k},\, H_{i,k} \mid \beta \in R^+,\, i \in I,\, k \in \mathbb{Z}\}$
forms a $\mathbb{C}[[\hbar]]$-basis of $U_\hbar(\mathfrak{g}^{\mathrm{tor}})$.
\end{theorem}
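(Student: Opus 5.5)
We prove that $B_\hbar(U,\prec)$ is a topological $\mathbb{C}[[\hbar]]$-basis in two halves: spanning, via the topological Nakayama lemma, and linear independence, via torsion-freeness together with the classical PBW theorem for $U(\mathfrak{g}^{\mathrm{tor}})$. The abstract describes exactly this strategy.

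\textbf{Setup.} Fix the quantum toroidal root vectors $X^\pm_{\beta,k}$ for $\beta\in R^+$ (real and imaginary positive roots of $\mathfrak{g}$) and the elements $H_{i,k}$. The $X^\pm_{\beta,k}$ are built from the Drinfeld generators by iterated $q$-commutators along a convex order, or by a Lusztig-type braid group action in the horizontal direction. We check that modulo $\hbar$ each of them reduces to a root vector of the toroidal Lie algebra $\mathfrak{g}^{\mathrm{tor}}=\mathfrak{g}\otimes\mathbb{C}[t^{\pm1}]$ (together with its central part), namely $x^\pm_\beta\otimes t^k$ and $h_i\otimes t^k$. The images of these elements form a basis of $\mathfrak{g}^{\mathrm{tor}}$ (or of the relevant quotient identified in the composite map defining $K$). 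By the classical PBW theorem, the images $\bar B$ of the ordered monomials $B_\hbar(U,\prec)$ then form a $\mathbb{C}$-basis of $U(\mathfrak{g}^{\mathrm{tor}})\cong U_\hbar(\mathfrak{g}^{\mathrm{tor}})/\hbar U_\hbar(\mathfrak{g}^{\mathrm{tor}})$.

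\textbf{Spanning.} Let $M$ be the $\hbar$-adic closure of the $\mathbb{C}[[\hbar]]$-span of $B_\hbar(U,\prec)$. Since $\bar B$ spans the quotient, we have $U_\hbar=M+\hbar U_\hbar$. The algebra $U_\hbar(\mathfrak{g}^{\mathrm{tor}})$ is $\hbar$-adically separated and complete, being defined as a topological algebra over $\mathbb{C}[[\hbar]]$. The topological Nakayama lemma therefore gives $M=U_\hbar$, so every element is an $\hbar$-adically convergent combination of ordered monomials. One has to make sure the grading by $Q\times\mathbb{Z}$ (root lattice times loop degree) keeps the relevant pieces manageable. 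Each graded piece is not finite-dimensional, because of the imaginary directions, so we will work with the topological span rather than finite sums.

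\textbf{Independence and the main obstacle.} Suppose $\sum_b c_b\, b=0$ with $c_b\in\mathbb{C}[[\hbar]]$ not all zero. If $U_\hbar$ is $\hbar$-torsion-free, we divide by the largest power of $\hbar$ dividing all the $c_b$ and reduce modulo $\hbar$. This yields a nontrivial relation among the elements of $\bar B$, contradicting the classical PBW theorem.

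The whole difficulty is therefore the torsion-freeness of $U_\hbar(\mathfrak{g}^{\mathrm{tor}})$, which is equivalently the flatness recorded in Corollary~\ref{cor:flat}. This is the key step, and I expect it to be the main obstacle, since the algebra is given by generators and relations with no a priori flatness. My first attempt would be to produce a faithful module that is torsion-free as a $\mathbb{C}[[\hbar]]$-module, and on which $\hbar$-torsion elements would have to act by zero. Natural candidates are a vertex/Fock representation, a shuffle-type realization, or the tensor product of Verma-type modules for the horizontal and vertical quantum affine subalgebras. Since faithfulness is itself hard to establish, I would instead use a triangular decomposition $U_\hbar\cong U^-\hat\otimes U^0\hat\otimes U^+$. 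This decomposition follows from the Drinfeld relations by a standard argument, and it reduces the problem to the halves.

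For $U^\pm$, I would compare with the Drinfeld--Jimbo positive part $U_\hbar(\hat{\mathfrak g}^{\pm})$ via the horizontal/vertical identification, that is, the Miki-type or Beck-type isomorphism on the half. There torsion-freeness is known from Lusztig's theory, because there are no Serre-type defects there. The Cartan part $U^0$ is a completed polynomial algebra, up to central elements, and hence is torsion-free. Granting this, the argument above finishes the proof.
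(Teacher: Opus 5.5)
Your overall structure matches the paper's, and your spanning step and your independence-from-torsion-freeness step are fine: you reduce $B_\hbar(U,\prec)$ to a classical PBW basis via $U_\hbar(\mathfrak{g}^{\mathrm{tor}})/\hbar U_\hbar(\mathfrak{g}^{\mathrm{tor}})\cong U(\mathfrak{g}^{\mathrm{tor}})$ (Theorem~\ref{thm:classical-limit}), get spanning from completeness and separatedness, and get independence from $\hbar$-torsion-freeness, as in Lemma~\ref{lem:top-nakayama}.

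\textbf{The gap.} It is the step you flag and then leave as ``granting this'': torsion-freeness is never proved, and your sketched route would not prove it.

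The triangular decomposition only moves the problem. It helps only if $U^\pm$ are the abstract algebras presented by \eqref{eq:QT5}--\eqref{eq:QT6}. For the subalgebras of $U_\hbar(\mathfrak{g}^{\mathrm{tor}})$ generated by the $X^\pm_{i,k}$, torsion-freeness is no easier than the original question. Flatness of the abstract Drinfeld half is then the entire difficulty.

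There is also no Drinfeld--Jimbo positive part to compare that half with. The toroidal Lie algebra is not of Kac--Moody type, so Lusztig's theory does not apply. Even one loop down, Beck's isomorphism does not carry the Drinfeld half $\langle x^+_{i,k}\rangle_{k\in\mathbb{Z}}$ into the Drinfeld--Jimbo half. In $U_q(\widehat{\mathfrak{sl}}_2)$, $x^+_{-1}$ has weight $\alpha_1-\delta=-\alpha_0$ and is, up to a Cartan factor, $F_0$. Miki's automorphism is a type-$A$ tool, and it exchanges horizontal and vertical subalgebras rather than identifying $U^+$ with a Drinfeld--Jimbo half. The one Drinfeld--Jimbo half that is available, that of the horizontal subalgebra generated by the $X^+_{i,0}$, is a small subalgebra of $U^+$. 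It says nothing about relations among the $X^+_{i,k}$ with $k\neq 0$.

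\textbf{The paper's argument does not close it either.} Let $\mathcal{A}$ be the completed free algebra and $\mathcal{I}_\hbar$ the closed ideal of relations. The paper proves $\mathcal{I}_\hbar\cap\hbar\mathcal{A}=\hbar\mathcal{I}_\hbar$ using a base step which asserts that $\hbar a\in\mathcal{I}_\hbar$ forces $\bar a$ to lie in the image of $\mathcal{I}_\hbar$ modulo $\hbar$. But the image of $\hbar a$ modulo $\hbar$ is $0$, so that inference is exactly the first-order case of what is being proved.

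The facts the argument actually relies on are compatible with torsion. These are that each quantum relation is its classical counterpart plus $\hbar$ times a correction with the same leading monomial, and that the classical quotient has a PBW basis. For example, the completed free algebra $\mathbb{C}[[\hbar]]\langle x,y,z\rangle$ modulo $[x,y]-\hbar x$, $[y,z]-\hbar y$, $[z,x]-\hbar z$ has classical limit $\mathbb{C}[x,y,z]$. Yet the Jacobi identity forces $\hbar^2(x+y+z)=0$, while $x+y+z\neq 0$.

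What is genuinely needed is one of the following:
\begin{itemize}
\item A lifting property for syzygies: whenever a combination of classical relations vanishes identically, the corresponding combination of quantum relations (which is divisible by $\hbar$) is $\hbar$ times an element of $\mathcal{I}_\hbar$. Equivalently, a diamond-lemma check that all quantum overlaps resolve.
\item A faithful $\hbar$-torsion-free representation, such as the shuffle-type realization you mention and then set aside.
\item An independent PBW theorem for the Drinfeld halves.
\end{itemize}
Neither your proposal nor the paper's proof supplies any of these.
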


The proof of this PBW theorem for $U_\hbar(\mathfrak{g}^{\mathrm{tor}})$ proceeds by
first establishing the classical limit
$U_\hbar(\mathfrak{g}^{\mathrm{tor}})/\hbar\,U_\hbar(\mathfrak{g}^{\mathrm{tor}})$
is isomorphic to $U(\mathfrak{g}^{\mathrm{tor}})$ (Theorem~\ref{thm:classical-limit}),
and then lifting the classical PBW basis to the quantum level via the topological
Nakayama lemma. This approach works uniformly for all untwisted affine types
without relying on a Schur--Weyl functor, which is the method used in type~$A$
\cite{Gu07} but is not available in general types.

\subsection*{Relation to prior work}

The proof strategy follows the approach introduced by Guay--Ma in \cite{GM12} for the
finite-dimensional case, substantially adapted to handle the new difficulties that arise in the
affine setting. We highlight the principal differences:

\begin{enumerate}[label=(\roman*)]
  \item \emph{Flatness of $U_\hbar(\mathfrak{g}^{\mathrm{tor}})$.}
  In the finite-dimensional setting, the flatness of quantum loop algebras over $\mathbb{C}[[\hbar]]$
  is a classical result \cite{GT13}. For quantum toroidal algebras, this is not known in general
  (see \cite{GM12}, Section 3, for a discussion of the difficulty). We establish flatness as a
consequence of our PBW theorem (Corollary \ref{cor:flat}), which requires the new
algebraic arguments of Sections \ref{sec:classical-limit} and \ref{sec:PBW-toroidal}.

  \item \emph{Imaginary roots.} The root system of an affine Lie algebra includes imaginary roots,
  whose treatment requires additional care throughout: the toroidal root vectors for imaginary roots
  (Section \ref{sec:braid-toroidal}) and their counterparts in the affine Yangian
  (Section \ref{sec:yangian-spanning}) must be handled separately from the real root case.

  \item \emph{Non-simply-laced types.} For non-simply-laced affine types ($B_N^{(1)}$, $C_N^{(1)}$,
  $F_4^{(1)}$, $G_2^{(1)}$), the braid group action on quantum toroidal algebras involves
  diagram automorphisms in an essential way (Section \ref{sec:braid-toroidal}), complicating
  the definition of root vectors and their ordering.

  \item \emph{Topological framework.} The quantum toroidal algebra
  $U_\hbar(\mathfrak{g}^{\mathrm{tor}})$ is defined here as an $\hbar$-adically complete
  topological algebra over $\mathbb{C}[[\hbar]]$ (Definition~\ref{def:quantum-toroidal}).
  This is in contrast to the earlier works on affine Yangians \cite{Gu07, YZ20}, which
  work over $\mathbb{C}[\hbar]$ or at a generic parameter $q$. The topological
  setting is essential for the application of the topological Nakayama lemma
  (Lemma~\ref{lem:top-nakayama}) and the torsion-freeness argument of
  Section~\ref{sec:PBW-toroidal}.
\end{enumerate}

Related work on classical limits of quantum toroidal algebras was carried out by
Tsymbaliuk \cite{Ts17} in type $A$, and the PBW theorem for affine Yangians in type $A$
was proved by Yang--Zhao \cite{YZ20}. For type $A$ with two parameters, the degeneration
isomorphism was established by Guay \cite{Gu07} using the Schur--Weyl functor; our approach
is entirely different and works uniformly for all untwisted types. Related results in the
context of $\imath$quantum groups and quantum superalgebras appear in
\cite{BFK24,LiWZ24,LiYZ24,LiZ25,LuWZ25,LmW21}.

\subsection*{Structure of the paper}

The paper is organized as follows. Section \ref{sec:prelim} reviews the necessary background
on Kac-Moody Lie algebras, toroidal Lie algebras, and their PBW bases, concluding with the
non-quantum version of our main result (Proposition \ref{prop:non-quantum}), which serves as
a blueprint for the quantum argument. Section \ref{sec:quantum-toroidal} introduces quantum
toroidal algebras, establishes their classical limit (Theorem \ref{thm:classical-limit}),
and constructs the PBW basis (Theorem \ref{thm:PBW-toroidal}). Section \ref{sec:yangian}
recalls the definition of affine Yangians following \cite{GNW18}, reviews the braid group
action \cite{Ko19}, and constructs the spanning set $B_\hbar(Y, \prec)$. Section
\ref{sec:main} introduces the $K$-adic filtration, establishes the $\kappa$-adic stability
of the star product (Lemma~\ref{lem:bk-stability}), and contains the proof of the
main theorem and its corollaries.

\section{Preliminaries}
\label{sec:prelim}

Throughout this paper we work over $\mathbb{C}$, and all algebras are associative unless
otherwise stated. We set $\mathbb{Z}_+ = \{0,1,2,\ldots\}$ and $\mathbb{N} = \{1,2,\ldots\}$.

\subsection{Kac-Moody Lie algebras}
\label{sec:KM}

Let $I = \{0,1,\ldots,N\}$ and let $A = (a_{ij})_{i,j \in I}$ be the $(N{+}1)\times(N{+}1)$
symmetrizable generalized Cartan matrix of untwisted affine type $X_N^{(1)}$. Fix rational
numbers $(d_i)_{i \in I}$ such that $(d_i a_{ij})$ is symmetric; explicitly:
\begin{align*}
  A_N^{(1)},\, D_N^{(1)},\, E_6^{(1)},\, E_7^{(1)},\, E_8^{(1)} &: d_i = 1, \\
  B_N^{(1)} &: d_0 = \cdots = d_{N-1} = 1,\; d_N = \tfrac{1}{2}, \\
  C_N^{(1)} &: d_0 = 1,\; d_1 = \cdots = d_{N-1} = \tfrac{1}{2},\; d_N = 1, \\
  F_4^{(1)} &: d_0 = d_1 = d_2 = 1,\; d_3 = d_4 = \tfrac{1}{2}, \\
  G_2^{(1)} &: d_0 = d_1 = 1,\; d_2 = \tfrac{1}{3}.
\end{align*}
Let $\mathfrak{g} = \mathfrak{g}(A)$ be the associated affine Kac-Moody Lie algebra
(without derivation), generated by Chevalley generators $e_i, f_i, h_i$ ($i \in I$)
subject to the standard relations; see \cite{Ka85} for details. Let $\mathfrak{h}$ be
the Cartan subalgebra spanned by $\{h_i \mid i \in I\}$, and define simple roots
$\alpha_i \in \mathfrak{h}^*$ by $\alpha_j(h_i) = a_{ij}$. The root and coroot lattices are
\[
  Q = \bigoplus_{i \in I} \mathbb{Z}\alpha_i, \qquad
  Q^\vee = \bigoplus_{i \in I} \mathbb{Z}h_i.
\]
There is a standard symmetric bilinear form $(\cdot,\cdot)$ on $\mathfrak{h}^*$ satisfying
$(\alpha_i, \alpha_j) = d_i a_{ij}$, and a null root $\delta = \sum_{i \in I} n_i \alpha_i
\in Q^+$ with $(\delta, \alpha_i) = (\delta,\delta) = 0$ for all $i \in I$ \cite[Section 4.8]{Ka85}.

Set $\mathring{I} = I \setminus \{0\}$. The subalgebra $\mathring{\mathfrak{g}}$ generated
by $e_i, f_i$ for $i \in \mathring{I}$ is the finite-dimensional simple Lie algebra of type
$X_N$ with Cartan matrix $\mathring{A} = (a_{ij})_{i,j \in \mathring{I}}$, simple roots
$\alpha_1,\ldots,\alpha_N$, and root system $\mathring{R} = \mathring{R}^+ \cup (-\mathring{R}^+)$.
Via the current realization \cite[Chapter 7]{Ka85}, we have
\[
  \mathfrak{g} \cong \mathring{\mathfrak{g}} \otimes \mathbb{C}[s^{\pm 1}] \oplus \mathbb{C}c,
\]
and the positive roots of $\mathfrak{g}$ decompose as $R^+ = R^+_{\mathrm{re}} \cup R^+_{\mathrm{im}}$,
where
\[
  R^+_{\mathrm{re}} = \{\alpha + k\delta \mid \alpha \in \mathring{R}^+,\, k \in \mathbb{Z}_+\}
  \cup \{-\alpha + k\delta \mid \alpha \in \mathring{R}^+,\, k \in \mathbb{N}\},
  \qquad
  R^+_{\mathrm{im}} = \{k\delta \mid k \in \mathbb{N}\}.
\]
The corresponding root spaces satisfy $\dim \mathfrak{g}_\beta = 1$ for $\beta \in R^+_{\mathrm{re}}$
and $\dim \mathfrak{g}_{k\delta} = N$ for $k \in \mathbb{N}$.

\subsection{Weyl groups and root vectors}
\label{sec:Weyl}

 Let $\mathring{W}$ and $W$ be the Weyl groups of $\mathring{\mathfrak{g}}$ and $\mathfrak{g}$,
generated by reflections $r_i$ ($i \in \mathring{I}$ and $i \in I$, respectively)
subject to the usual Coxeter relations. Their actions on $\mathfrak{g}$ are
defined by
\[
  r_i(x) := \exp(\mathrm{ad}\, e_i)\exp(-\mathrm{ad}\, f_i)\exp(\mathrm{ad}\, e_i)(x),
  \qquad x \in \mathfrak{g}.
\]
Let $\mathring{P}$ be the weight lattice of $\mathring{\mathfrak{g}}$ with fundamental weights
$\omega_i$ ($i \in \mathring{I}$). The \emph{extended Weyl group} is $W^e = \mathring{W} \ltimes
\mathring{P}$. Let $\Gamma$ be the group of Dynkin diagram automorphisms of $\mathfrak{g}$;
then $W^e \cong \Gamma \ltimes W$, with $\Gamma$ acting on $W$ by $\eta(r_i) = r_{\eta(i)}$
and on $\mathfrak{g}$ by
\[
  \eta(e_i) = e_{\eta(i)}, \quad \eta(f_i) = f_{\eta(i)}, \quad \eta(h_i) = h_{\eta(i)},
  \qquad \eta \in \Gamma.
\]

For each real positive root $\alpha \in R^+_{\mathrm{re}}$, write $\alpha = w(\alpha_j)$ for
some $w = \eta r_{i_1}\cdots r_{i_l} \in W^e$ and $j \in I$ with $l(w)$ minimal. We define
\emph{real root vectors}
\[
  e_\alpha := \eta r_{i_1}\cdots r_{i_l}(e_j), \qquad
  f_\alpha := \eta r_{i_1}\cdots r_{i_l}(f_j),
\]
and \emph{imaginary root vectors} $e_{k\delta(i)} := [e_i, e_{k\delta - \alpha_i}]$,
$f_{k\delta(i)} := [f_i, f_{k\delta - \alpha_i}]$ for $k > 0$, $i \in \mathring{I}$.
Following \cite{Ka85}, we fix normalized elements
\[
  x^+_\alpha = \sqrt{d_j}\, e_\alpha, \quad x^-_\alpha = \sqrt{d_j}\, f_\alpha
  \quad \text{with } (x^+_\alpha, x^-_\alpha) = 1 \text{ for } \alpha \in R^+_{\mathrm{re}},
\]
where $\alpha = w(\alpha_j)$. These normalized vectors are used in Section~\ref{sec:yangian}.

\subsection{Toroidal Lie algebras}
\label{sec:toroidal-Lie}
We recall the toroidal Lie algebra following \cite[Section 3]{MRY90}.

\begin{defi}
\label{def:toroidal}
The \emph{toroidal Lie algebra} $\mathfrak{t} = \mathfrak{t}(A)$ is the Lie algebra over
$\mathbb{C}$ generated by $\{\gamma,\, h_i^{(k)},\, e_i^{(k)},\, f_i^{(k)} \mid i \in I,\, k \in \mathbb{Z}\}$
subject to the relations
\begin{align}
  [\gamma,\, h_i^{(k)}] = [\gamma,\, e_i^{(k)}] = [\gamma,\, f_i^{(k)}] &= 0,
  \label{eq:tor1}\\
  [h_i^{(k)},\, h_j^{(l)}] = k(h_i, h_j)\,\delta_{k,-l}\,\gamma, &
  \label{eq:tor2}\\
  [h_i^{(k)},\, e_j^{(l)}] = a_{ij}\,e_j^{(k+l)}, \qquad
  [h_i^{(k)},\, f_j^{(l)}] &= -a_{ij}\,f_j^{(k+l)},
  \label{eq:tor3}\\
  [e_i^{(k)},\, f_j^{(l)}] = -\delta_{ij}\!\left(h_i^{(k+l)}
  + \tfrac{2k\,\delta_{k,-l}}{(\alpha_i,\alpha_i)}\,\gamma\right),
  \label{eq:tor4}\\
  [e_i^{(k)},\, e_i^{(l)}] = [f_i^{(k)},\, f_i^{(l)}] = 0,&
  \label{eq:tor5}\\
  (\mathrm{ad}\, e_i^{(0)})^{1-a_{ij}} e_j^{(k)}
  = (\mathrm{ad}\, f_i^{(0)})^{1-a_{ij}} f_j^{(k)} &= 0, \quad i \neq j.
  \label{eq:tor6}
\end{align}
\end{defi}

 By \cite{MRY90}, the toroidal Lie algebra $\mathfrak{t}$ is the
universal central extension of $\mathring{\mathfrak{g}} \otimes \mathbb{C}[t^{\pm 1}, s^{\pm 1}]$.
To make this explicit, set
\[
  \widehat{\mathfrak{g}} \;:=\; \mathring{\mathfrak{g}} \otimes \mathbb{C}[t^{\pm 1}, s^{\pm 1}]
  \oplus \mathcal{Z},
\]
where $\mathcal{Z}$ is the central subspace spanned by the set 
$\{s^a t^b c,\, s^a t^b c' \mid a,b \in \mathbb{Z}\}$ subject to the relation
\[
  a\, s^a t^b c + b\, s^a t^b c' = 0.
\]
The following proposition, which is \cite[Proposition~3.5]{MRY90}, gives the isomorphism
explicitly.

\begin{prop}[\cite{MRY90}]
\label{prop:toroidal-iso}
The map $\mathfrak{t} \to \widehat{\mathfrak{g}}$ defined by
\[
  \gamma \mapsto c',\quad e_i^{(k)} \mapsto e_i \otimes t^k, \quad
  f_i^{(k)} \mapsto -f_i \otimes t^k, \quad h_i^{(k)} \mapsto h_i \otimes t^k,\quad i \in \mathring{I},
\]
\[
  e_0^{(k)} \mapsto e_{\theta} \otimes t^k s, \quad
  f_0^{(k)} \mapsto -f_\theta \otimes t^k s^{-1},\quad h_0^{(k)} \mapsto h_0 \otimes t^k + t^k c,
\]
is a Lie algebra isomorphism, where $\{n_i^\vee\}_{i \in \mathring{I}}$ are the dual Kac labels (comarks) of 
$\mathring{\mathfrak{g}}$, $h_\theta = \sum_{i \in \mathring{I}} n_i^\vee h_i$, 
and $\{e_\theta, f_\theta, h_\theta\}$ is the $\mathfrak{sl}_2$-triple associated 
to the longest root $\theta$ of $\mathring{\mathfrak{g}}$.
\end{prop}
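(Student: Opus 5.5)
The plan has three steps: build a Lie homomorphism $\phi\colon \mathfrak{t}\to\widehat{\mathfrak{g}}$ by checking relations \eqref{eq:tor1}--\eqref{eq:tor6} on the images, prove surjectivity, and then prove injectivity. For injectivity I would use the universal property of the universal central extension of $\mathring{\mathfrak{g}}\otimes\mathbb{C}[t^{\pm1},s^{\pm1}]$, as in \cite{MRY90}.

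\textbf{Step 1 (well-definedness).} Equip $\widehat{\mathfrak{g}}$ with the Kassel-type bracket
\[
[x\otimes f,\, y\otimes g] = [x,y]\otimes fg + (x,y)\,\overline{(df)\,g},
\]
where $\overline{\,\cdot\,}$ denotes the class of a one-form modulo exact forms. We identify $s^at^b c \leftrightarrow \overline{s^{a}t^{b}\,s^{-1}ds}$ and $s^at^bc'\leftrightarrow \overline{s^at^b\,t^{-1}dt}$. Then the relation $a\,s^at^bc+b\,s^at^bc'=0$ is exactly $\overline{d(s^at^b)}=0$.

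With these conventions, the relations are checked as follows.
\begin{itemize}
\item \eqref{eq:tor1} holds trivially, since $c'$ is central.
\item \eqref{eq:tor3} and \eqref{eq:tor5} are immediate from the bracket on $\mathring{\mathfrak g}$ together with $(e_i,e_i)=0$. One point needs care for $i=0$: $\alpha_0$ acts through $-\theta$, and the extra $t^kc$ term in $h_0^{(k)}$ is central, so it does not affect \eqref{eq:tor3}.
\item \eqref{eq:tor6} follows from the Serre relations in $\mathring{\mathfrak g}$ and its affine extension. Here one uses that $\operatorname{ad}(e_i\otimes t^0)$ produces no central terms, because $d(1)=0$.
\item \eqref{eq:tor2} and \eqref{eq:tor4} are where the cocycle matters. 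For example, $[h_i\otimes t^k,\,h_j\otimes t^l]=(h_i,h_j)\,\overline{k t^{k+l-1}dt}$. This is $0$ unless $k+l=0$, and for $k=-l$ it equals $k(h_i,h_j)c'$.
\item For $i=0$ in \eqref{eq:tor4}, $[e_\theta\otimes t^ks,\,-f_\theta\otimes t^ls^{-1}]$ contributes $-h_\theta\otimes t^{k+l}$ plus the cocycle terms $\overline{d(t^ks)\,t^ls^{-1}}$. The $ds$-part gives $t^{k+l}c$ and the $dt$-part gives $k\,\delta_{k,-l}\,c'$. Matching these against $h_0=c-h_\theta$ and the normalization $2/(\alpha_0,\alpha_0)$ is where signs and normalizations of $(\cdot,\cdot)$ must be fixed. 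I expect this bookkeeping to be the main computational obstacle.
\end{itemize}

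\textbf{Step 2 (surjectivity).} The image contains $e_i\otimes t^k$, $f_i\otimes t^k$ and $h_i\otimes t^k$ for $i\in\mathring I$, together with $e_\theta\otimes t^ks$ and $f_\theta\otimes t^ks^{-1}$. Since $\mathring{\mathfrak g}$ is simple, bracketing these generates $\mathring{\mathfrak g}\otimes t^k s^{m}$ for every $m\in\mathbb{Z}$, by the standard loop-algebra argument. The central elements $s^at^bc$ and $s^at^bc'$ then appear as cocycle values of suitable brackets, for instance $[h\otimes s^at^b,\,h'\otimes s^{-a'}t^{-b'}]$ with $(h,h')\neq0$.

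\textbf{Step 3 (injectivity).} The target is a central extension of the perfect Lie algebra $\mathring{\mathfrak g}\otimes\mathbb{C}[t^{\pm1},s^{\pm1}]$, and by Kassel it is the universal one. We then argue as follows.
\begin{enumerate}
\item Show that $\mathfrak t$ is perfect.
\item Show that $\mathfrak t$ is a central extension of the same loop algebra. To do this, define $\psi\colon\mathfrak t\to\mathring{\mathfrak g}\otimes\mathbb{C}[t^{\pm1},s^{\pm1}]$ as the composite of $\phi$ with projection, and prove that $\ker\psi$ is central in $\mathfrak t$. This requires a presentation of the loop algebra by the relations \eqref{eq:tor3}--\eqref{eq:tor6} modulo centre, of Garland/Moody--Rao--Yokonuma type.
\item Conclude by universality. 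Universality gives a map $\widehat{\mathfrak g}\to\mathfrak t$ over the loop algebra, and composing with $\phi$ yields the identity on $\widehat{\mathfrak g}$. Since $\mathfrak t$ is perfect, the composite in the other order is also the identity, so $\phi$ is an isomorphism.
\end{enumerate}
The centrality of $\ker\psi$ in item 2 is the real difficulty. Since this result is quoted from \cite[Proposition~3.5]{MRY90}, I would ultimately cite that for it.
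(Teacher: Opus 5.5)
Your outline has the right overall architecture: verify the relations, prove surjectivity, then use perfectness of $\mathfrak t$ together with Kassel's universal central extension. On the one hard point, the centrality of $\ker\psi$, you end up where the paper does, since the paper gives no argument and simply quotes \cite[Proposition~3.5]{MRY90}. Your universality step (item~3) is correct. Perfectness of $\mathfrak t$ is immediate: $h_i^{(k)}=-[e_i^{(k)},f_i^{(0)}]$, $e_j^{(k)}=\tfrac12[h_j^{(0)},e_j^{(k)}]$, and $\gamma$ is a nonzero multiple of $[h_i^{(1)},h_i^{(-1)}]$.

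\textbf{The gap is in Step~1.} The verification cannot succeed for the map as displayed, and the defect is not a normalization of $(\cdot,\cdot)$.
\begin{itemize}
\item Take \eqref{eq:tor3} with $i\in\mathring{I}$, $j=0$, $k=0$. It requires $[h_i\otimes 1,\phi(e_0^{(l)})]=a_{i0}\,\phi(e_0^{(l)})$, where $a_{i0}=\alpha_0(h_i)=-\theta(h_i)$.
\item But $[h_i\otimes 1,\,e_\theta\otimes t^l s]=\theta(h_i)\,e_\theta\otimes t^l s$, with no cocycle term because $(h_i,e_\theta)=0$. Since $\theta\neq 0$, this fails.
\item You note yourself that ``$\alpha_0$ acts through $-\theta$'', yet $e_\theta\otimes t^ks$ has $\mathring{\mathfrak{h}}$-weight $+\theta$. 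The same weight error produces the sign $-h_\theta\otimes t^{k+l}$ you found in \eqref{eq:tor4} for $i=0$, where $+h_\theta\otimes t^{k+l}$ is needed. No rescaling of the form repairs this.
\end{itemize}
The images of $e_0^{(k)}$ and $f_0^{(k)}$ must be lowest and highest root vectors respectively:
\[
e_0^{(k)}\mapsto f_\theta\otimes t^ks,\qquad f_0^{(k)}\mapsto -e_\theta\otimes t^ks^{-1},\qquad h_0^{(k)}\mapsto -h_\theta\otimes t^k+t^kc .
\]
Here ``$h_0\otimes t^k$'' is read as $-h_\theta\otimes t^k$, which is why $h_\theta$ is defined in the statement. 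With $(\theta,\theta)=2$ one has $(f_\theta,e_\theta)=1=2/(\alpha_0,\alpha_0)$, and your cocycle conventions then give
\[
[f_\theta\otimes t^ks,\,-e_\theta\otimes t^ls^{-1}]=h_\theta\otimes t^{k+l}-t^{k+l}c-k\,\delta_{k,-l}\,c'.
\]
This is exactly the image of $-\bigl(h_0^{(k+l)}+\tfrac{2k\delta_{k,-l}}{(\alpha_0,\alpha_0)}\gamma\bigr)$, and \eqref{eq:tor3} now holds for all $i,j$. Any correct proof, yours or that of \cite{MRY90}, has to use this corrected map.

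\textbf{A smaller slip in \eqref{eq:tor6}.} Your justification ``$d(1)=0$'' only covers $i\in\mathring{I}$. For $i=0$ the operator is $\mathrm{ad}(f_\theta\otimes s)$, and $d(s)\neq 0$, so cocycle terms do appear. They are harmless, for three reasons:
\begin{itemize}
\item Central terms produced at an intermediate step are killed by the next $\mathrm{ad}$.
\item The cocycle of the last step is proportional to $(f_\theta,y)$, where $y=(\mathrm{ad} f_\theta)^{r-1}e_j$ has weight $\alpha_j-(r-1)\theta$. This weight is never $\theta$: in rank one, $r=3$. So that cocycle vanishes.
\item The non-central part vanishes by the affine Serre relations.
\end{itemize}
The same argument handles $f_0^{(0)}$.
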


Via Proposition~\ref{prop:toroidal-iso}, the notion of root vectors extends naturally to
$\mathfrak{t}$, namely, for $\alpha = \eta r_{i_1}\cdots r_{i_l}(\alpha_j) \in R^+_{\mathrm{re}}$
and $k \in \mathbb{Z}$,  set
\[
  e_\alpha^{(k)} := \eta r_{i_1}\cdots r_{i_l}(e_j^{(k)}), \qquad
  f_\alpha^{(k)} := \eta r_{i_1}\cdots r_{i_l}(f_j^{(k)}),
\]
and for $n > 0$, set $e_{n\delta(i)}^{(k)} := [e_i, e_{n\delta-\alpha_i}^{(k)}]$,
$f_{n\delta(i)}^{(k)} := [f_i, f_{n\delta-\alpha_i}^{(k)}]$.

Introduce a total order $\prec$ on $R^+$ and extend it to the basis elements of $\mathfrak{t}$
by declaring $\gamma \prec f_\beta^{(k)} \prec h_i^{(l)} \prec e_{\beta'}^{(k')}$ for all
$\beta, \beta' \in R^+$, $i \in I$, $k,k',l \in \mathbb{Z}$, with the ordering among elements
of the same type given lexicographically by $(\beta, k)$ for the $e$-vectors and $(-\beta, k)$ for the $f$-vectors.
The set $\{\gamma,\, e_\beta^{(k)},\, f_\beta^{(k)},\, h_i^{(k)}\}$ is then linearly independent
by Proposition~\ref{prop:toroidal-iso} and forms a basis of $\mathfrak{t}$. Consequently:

\begin{prop}
\label{prop:PBW-toroidal-Lie}
The set of all ordered monomials in
$\{\gamma,\, e_\beta^{(k)},\, f_\beta^{(k)},\, h_i^{(k)} \mid \beta \in R^+,\, i \in I,\, k\in \mathbb{Z}\}$
forms a $\mathbb{C}$-basis of $U(\mathfrak{t})$.
\end{prop}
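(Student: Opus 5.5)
The proposition follows from the Poincaré--Birkhoff--Witt theorem for the Lie algebra $\mathfrak{t}$, once we know that the displayed set is a $\mathbb{C}$-basis of $\mathfrak{t}$ and that $\prec$ is a total order on it. The plan is to transport everything through the isomorphism of Proposition~\ref{prop:toroidal-iso} to $\widehat{\mathfrak{g}} = \mathring{\mathfrak{g}}\otimes\mathbb{C}[t^{\pm1},s^{\pm1}]\oplus\mathcal{Z}$ and check the basis property there. The classical PBW theorem, valid for any Lie algebra over a field with a totally ordered basis, then gives the statement for $U(\mathfrak{t})$ directly.

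First, I compute the images of the root vectors. For $i\in I$ and $k\in\mathbb{Z}$, the Weyl group operators $r_i=\exp(\mathrm{ad}\,e_i)\exp(-\mathrm{ad}\,f_i)\exp(\mathrm{ad}\,e_i)$ are built from $\mathrm{ad}\,e_i=\mathrm{ad}\,e_i^{(0)}$ and $\mathrm{ad}\,f_i$. Each of these is a derivation of degree $0$ in $t$. So $e_\alpha^{(k)}\mapsto x_\alpha\otimes t^k$ modulo central terms, where $x_\alpha$ spans the root space $\mathfrak{g}_\alpha$ under the affine realization $\mathring{\mathfrak{g}}\otimes\mathbb{C}[s^{\pm1}]\oplus\mathbb{C}c$. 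Concretely, for $\alpha=\pm\bar\alpha+m\delta$ with $\bar\alpha\in\mathring{R}$, we have $x_\alpha\in\mathring{\mathfrak{g}}_{\pm\bar\alpha}\otimes s^m$. Central corrections cannot occur for real roots, since brackets only create the central elements $s^at^bc,\ s^at^bc'$ in degree $(0,*)$ of the $\mathring{\mathfrak{g}}$-weight grading. The same holds for $f_\alpha^{(k)}$. For the imaginary vectors, $e_{n\delta(i)}^{(k)}=[e_i,e_{n\delta-\alpha_i}^{(k)}]$ maps to $h_i\otimes s^nt^k$ plus a central term, up to a nonzero scalar.

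Next comes the linear independence and spanning. The grading of $\widehat{\mathfrak{g}}$ by $\mathring{Q}\times\mathbb{Z}^2$ (weight, $s$-degree, $t$-degree) reduces this to each graded piece. For a real weight, the piece is one-dimensional and is spanned by the unique $e_\beta^{(k)}$ or $f_\beta^{(k)}$. For weight $0$ and $s$-degree $n$, the piece is $\mathring{\mathfrak{h}}\otimes s^nt^k\oplus\mathcal{Z}_{n,k}$. Modulo the center, the $N$ vectors $e_{n\delta(i)}^{(k)}$ (for $n>0$), resp. $f_{n\delta(i)}^{(k)}$ (for $n<0$), resp. $h_i^{(k)}$ (for $n=0$, where together with $h_0^{(k)}$ they span $\mathring{\mathfrak{h}}\otimes t^k\oplus\mathbb{C}t^kc$) give a basis of $\mathring{\mathfrak{h}}\otimes s^nt^k$. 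What remains is the central part $\mathcal{Z}_{n,k}$, which is one-dimensional if $(n,k)\ne(0,0)$ and two-dimensional at $(0,0)$.

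The main obstacle is this center. The listed set contains only $\gamma\mapsto c'$ and the $h_0^{(k)}$, which carry $t^kc$. As written, the set therefore does not account for the central elements $s^nt^kc$ with $n\neq0$. I would handle this in one of two ways. Either I interpret $h_i^{(k)}$, $e_\beta^{(k)}$, $f_\beta^{(k)}$ as absorbing the central components via Proposition~\ref{prop:toroidal-iso}, or, more honestly, I augment the basis by the central elements $s^nt^kc$ (images of suitable commutators $[e_{n\delta(i)}^{(a)},f_{\ldots}]$) and order them next to $\gamma$. After that, I verify that $\prec$ is a total order. It is lexicographic on $(\beta,k)$ within each type, with the types ordered as $\gamma\prec f\prec h\prec e$. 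The imaginary multiplicity index $i$ must be added to the lexicographic key. Finally, I apply PBW to obtain the basis of ordered monomials of $U(\mathfrak{t})$.
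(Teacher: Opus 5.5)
Your plan is the paper's own. Its entire proof is the assertion that $\{\gamma,e^{(k)}_\beta,f^{(k)}_\beta,h^{(k)}_i\}$ is a basis of $\mathfrak{t}$ by Proposition~\ref{prop:toroidal-iso}, followed by PBW. Your graded count is correct, and it shows that this assertion is false, and with it the proposition as stated. So the obstacle you hit is a gap in the paper's argument, not in your method.

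For $n\neq 0$, the piece of $\widehat{\mathfrak{g}}$ of $\mathring{\mathfrak{g}}$-weight $0$ and $(s,t)$-bidegree $(n,k)$ is $\mathring{\mathfrak{h}}\otimes s^nt^k\oplus\mathcal{Z}_{n,k}$, of dimension $N+1$. The listed set contributes only $N$ vectors there. Hence the ordered monomials are linearly independent, being part of the PBW basis attached to any ordered basis of $\mathfrak{t}$ extending the set and $\prec$, but they do not span $U(\mathfrak{t})$.

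Here is a concrete witness. The listed elements are homogeneous in $t$. Every listed element of $t$-degree $0$ lies in $\mathbb{C}\gamma$ plus the subalgebra generated by the $e^{(0)}_i,f^{(0)}_i,h^{(0)}_i$ ($i\in I$), whose image lies in $\mathring{\mathfrak{g}}\otimes\mathbb{C}[s^{\pm1}]\oplus\mathbb{C}c$. For $i\in\mathring{I}$, however, the bracket $[e^{(-1)}_{\delta(i)},h^{(1)}_i]$ is a nonzero multiple of $s\,c'$. Under Proposition~\ref{prop:toroidal-iso} it becomes $[\lambda\,h_i\otimes st^{-1},\,h_i\otimes t]$ with $\lambda\neq 0$, which is purely central. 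So $s\,c'$ is an element of $\mathfrak{t}$ outside the span of the listed set. This witness does not depend on whether the imaginary vectors are indexed by $\mathring{I}$ or by $I$.

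You need two corrections.
\begin{enumerate}
\item Drop the option of letting the listed vectors ``absorb'' central components. Whatever their central parts, $N$ vectors cannot span an $(N+1)$-dimensional space. Only augmenting the set works, and what you then prove is an amended statement, not the one given.
\item The missing vectors are $s^nt^kc'$, not $s^nt^kc$. For $n\neq 0$ the defining relation of $\mathcal{Z}$ gives $s^nt^kc=-\tfrac{k}{n}\,s^nt^kc'$. In particular $s^nc=0$ while $s^nc'\neq 0$, so your description misses exactly the simplest members.
\end{enumerate}

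With these changes your PBW argument for the amended statement is complete. Adjoin a spanning vector of each $\mathcal{Z}_{n,k}$ ($n\neq 0$), ordered next to $\gamma$, and add the index $i$ of $n\delta(i)$ to the lexicographic key.

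The omission also propagates. $\mathfrak{g}^{\mathrm{tor}}=\mathfrak{t}/\langle\gamma\rangle$ kills only $\mathbb{C}c'$, so these central vectors survive in $\mathfrak{g}^{\mathrm{tor}}$. They have no counterpart in $\mathfrak{g}[t^{\pm1}]$.
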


\subsection{Polynomial current algebras as limits of toroidal algebras}
\label{sec:non-quantum}

Set $\mathfrak{g}^{\mathrm{tor}} := \mathfrak{t}/\langle\gamma\rangle$. We also write
$\mathfrak{g}[t^{\pm 1}] := \mathfrak{g} \otimes \mathbb{C}[t^{\pm 1}]$ for the loop
algebra of $\mathfrak{g}$, and $\mathfrak{g}[u] := \mathfrak{g} \otimes \mathbb{C}[u]$
for the polynomial current algebra. The following non-quantum degeneration result, due to
\cite{LuWZ25} in the $\imath$quantum group context and adapted here, serves as the
blueprint for our main theorem. For $m \in \mathbb{Z}_+$, define elements of
$U(\mathfrak{g}^{\mathrm{tor}})$ by the alternating sums
\begin{align*}
    e_\beta^{(k,m)} := \sum_{a=0}^{m}(-1)^{m-a}\binom{m}{a} e_\beta^{(k+a)},\\
    f_\beta^{(k,m)} := \sum_{a=0}^{m}(-1)^{m-a}\binom{m}{a} f_\beta^{(k+a)},\\
    h_i^{(k,m)} := \sum_{a=0}^{m}(-1)^{m-a}\binom{m}{a} h_i^{(k+a)}.     
\end{align*}
Let $\jmath \colon U(\mathfrak{g}^{\mathrm{tor}}) \xrightarrow{\sim} U(\mathfrak{g}[t^{\pm 1}])$ 
be the isomorphism induced by Proposition~\ref{prop:toroidal-iso}; under $\jmath$, the element
$z^{(k,m)}$ (for $z \in \{h_i, e_\beta, f_\beta\}$) maps to $z\, t^k(t-1)^m$. Define $\kappa$
as the kernel of the composite
\[
  U(\mathfrak{g}^{\mathrm{tor}}) \xrightarrow{\sim} U(\mathfrak{g}[t^{\pm 1}])
  \xrightarrow{\;t \mapsto 1\;} U(\mathfrak{g}),
\]
giving a decreasing filtration $U(\mathfrak{g}^{\mathrm{tor}}) = \kappa^0 \supset \kappa^1
= \kappa \supset \kappa^2 \supset \cdots$, with associated graded algebra
$\mathrm{gr}_\kappa\, U(\mathfrak{g}^{\mathrm{tor}}) = \bigoplus_{m \geq 0} \kappa^m/\kappa^{m+1}$.

\begin{prop}
\label{prop:non-quantum}
There exists an isomorphism of algebras
\[
  \pi \colon U(\mathfrak{g}[u]) \xrightarrow{\;\sim\;} \mathrm{gr}_\kappa\, U(\mathfrak{g}^{\mathrm{tor}})
\]
sending $z\, u^m \mapsto \overline{z^{(0,m)}}$ for $z \in \{h_i, e_\beta, f_\beta\}$
and $m \in \mathbb{Z}_+$, where $\overline{z^{(0,m)}}$ denotes the image of $z^{(0,m)}$
in $\kappa^m/\kappa^{m+1}$.
\end{prop}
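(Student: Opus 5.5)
Via $\jmath$ I would transport everything to $A:=U(\mathfrak{g}[t^{\pm1}]) = U(\mathfrak{g}\otimes R)$ with $R=\mathbb{C}[t^{\pm1}]$. Then $\kappa$ becomes the kernel of $U(\mathfrak{g}\otimes R)\to U(\mathfrak{g}\otimes R/\mathfrak{m})$, where $\mathfrak{m}=(t-1)R$. This is the two-sided ideal generated by the Lie ideal $\mathfrak{g}\otimes\mathfrak{m}$. In these terms the statement is the standard fact that, for a Lie algebra $L=\mathfrak{g}\otimes R$ with the filtration $L_m:=\mathfrak{g}\otimes\mathfrak{m}^m$, the $\kappa$-adic filtration is the algebra filtration generated by the $L_m$ in degree $m$, and its associated graded is $U(\mathrm{gr}\,L)$. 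Since $\mathfrak{m}^m/\mathfrak{m}^{m+1}$ is spanned by the class of $(t-1)^m$ and $\mathrm{gr}_{\mathfrak m}R\cong\mathbb{C}[u]$ with $u=\overline{t-1}$, we get $\mathrm{gr}\,L\cong\mathfrak{g}[u]$. Note that $t^k(t-1)^m\equiv(t-1)^m$ modulo $\mathfrak{m}^{m+1}$, so $z^{(k,m)}$ and $z^{(0,m)}$ have the same image.

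\textbf{Step 1: filtration description.} Define $F^m\subset A$ as the span of products $x_1\cdots x_r$ with $x_j\in L_{m_j}$ and $\sum m_j\ge m$. I would show $\kappa^m=F^m$.
\begin{itemize}
\item The inclusion $F^m\subseteq\kappa^m$ holds because $L_{m_j}=\mathfrak{g}\otimes\mathfrak{m}^{m_j}$ is spanned by products of $m_j$ elements of $\kappa$. Indeed, $(z\otimes t^k(t-1)^{m})$ can be obtained as an iterated bracket of elements $h\otimes(t-1)$ together with $z\otimes t^k$, up to scalar and lower terms; more simply, $\kappa$ is an ideal and $[\kappa^a,\kappa^b]\subseteq\kappa^{a+b}$.
\item For the reverse inclusion, $\kappa=A(\mathfrak{g}\otimes\mathfrak m)A=F^1$, and $F^aF^b\subseteq F^{a+b}$, so $\kappa^m\subseteq F^m$.
\end{itemize}
The first bullet is the delicate point. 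I would handle it by choosing $h\in\mathfrak{h}$ with $\alpha(h)\neq0$ for the relevant root and using $[h\otimes(t-1)^{a},z\otimes(t-1)^b]=\alpha(h)z\otimes(t-1)^{a+b}$. For the Cartan part I would use $[e_\alpha\otimes(t-1)^a,f_\alpha\otimes(t-1)^b]$, together with the fact that $\mathfrak g$ is generated by root vectors and $[\mathfrak g,\mathfrak g]\supseteq\mathfrak h$ up to the center. One caveat: $\mathfrak{g}$ has center $\mathbb{C}c$, so $c\otimes(t-1)^m$ might not lie in $\kappa^m$. I would check this against the relation $a\,s^at^bc+b\,s^at^bc'=0$. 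For $a=0$, $b\neq0$ this gives $t^bc'=0$, and with $\gamma$ killed the remaining $t^kc$ arise from $h_0^{(k)}$ and are handled as commutators $[e_0^{(k)},f_0^{(l)}]$ in $\mathfrak{g}^{\mathrm{tor}}$.

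\textbf{Step 2: the map $\pi$.} By Step 1, the assignment $z\otimes u^m\mapsto\overline{z^{(0,m)}}$ is a Lie map $\mathfrak{g}[u]\to\mathrm{gr}_\kappa A$. Indeed, $[z\otimes(t-1)^a,z'\otimes(t-1)^b]=[z,z']\otimes(t-1)^{a+b}$ holds exactly, and the commutator in $\mathrm{gr}$ is induced from $A$. Hence $\pi$ extends to an algebra map. Surjectivity follows because $\kappa^m/\kappa^{m+1}=F^m/F^{m+1}$ is spanned by products of classes of elements of $L_{m_j}$, and each such class is a class of some $z\otimes(t-1)^{m_j}$ modulo $L_{m_j+1}\subseteq F^{m_j+1}$.

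\textbf{Step 3: injectivity, the main obstacle.} Choose a basis $\{y_\lambda\}$ of $\mathfrak g$ (the $e_\beta,f_\beta,h_i$, consistent with Proposition~\ref{prop:PBW-toroidal-Lie}) and the basis $\{y_\lambda\otimes(t-1)^m\}$ of $L$, which is a basis because $\{(t-1)^m\}_{m\ge0}$ together with $t^{-k}$ terms span $R$. It is cleaner to use a basis of $R$ adapted to $\mathfrak m$: take $\{(t-1)^m\}_{m\ge0}$ as a basis of $R$ modulo nothing? That is not a basis of the Laurent ring. Instead I would use a basis $\{r_{m,j}\}$ of $R$ with $r_{m,j}\in\mathfrak m^m\setminus\mathfrak m^{m+1}$. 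Since $\dim\mathfrak m^m/\mathfrak m^{m+1}=1$, one may take $r_m=(t-1)^m$ for $m\ge0$, and these span a complement to nothing; in fact $\mathbb{C}[t^{\pm1}]$ requires extra elements, but every extra element lies in $\bigcap$? It does not, since $\bigcap_m\mathfrak m^m=0$.

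The cleanest route is therefore the standard theorem: for a filtered Lie algebra, $\mathrm{gr}\,U(L)\cong U(\mathrm{gr}\,L)$. This is proved by Step 1 together with PBW. Taking an ordered basis of $L$ compatible with the filtration, the ordered monomials of total degree $\ge m$ span $F^m$, and their leading symbols are the PBW monomials of $\mathrm{gr}\,L$. The key claim is that a PBW monomial of weight exactly $m$ does not lie in $F^{m+1}$, i.e.\ a filtration-degree PBW statement. I would prove this by passing to $\mathrm{gr}$ of the symmetric algebra via symmetrization: $\mathrm{gr}\,U(L)\cong\mathrm{gr}\,S(L)\cong S(\mathrm{gr}\,L)$ as graded vector spaces, since symmetrization $S(L)\to U(L)$ is a filtered isomorphism. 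This gives linear independence, and hence $\pi$ maps the PBW basis of $U(\mathfrak g[u])$ bijectively onto a basis of $\mathrm{gr}_\kappa A$. Alternatively, and more in line with the source, I would simply cite the argument of \cite{LuWZ25}.
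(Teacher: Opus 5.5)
Your argument is correct within the paper's identification $\jmath$, and it is a genuinely fuller route than the paper's. The paper gets the homomorphism property from $[z_1^{(0,m_1)},z_2^{(0,m_2)}]=[z_1,z_2]^{(0,m_1+m_2)}$ and notes $z^{(k,m)}-z^{(0,m)}\in\kappa^{m+1}$. It then simply asserts bijectivity from the PBW theorems for $U(\mathfrak{g}[u])$ and $U(\mathfrak{g}^{\mathrm{tor}})$. You instead identify $\kappa^m$ with the algebra filtration $F^m$ generated by $L_m=\mathfrak g\otimes\mathfrak m^m$, and reduce to $\mathrm{gr}\,U(L)\cong U(\mathrm{gr}\,L)$ with $\mathrm{gr}\,L\cong\mathfrak g[u]$. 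This supplies two things the paper uses silently:
\begin{enumerate}[label=(\roman*)]
\item the inclusion $\mathfrak g\otimes\mathfrak m^m\subseteq\kappa^m$, which is needed even to know that $z^{(0,m)}\in\kappa^m$ and $z^{(k,m)}-z^{(0,m)}\in\kappa^{m+1}$;
\item a PBW basis compatible with the filtration. The basis $z\otimes t^k$ behind Proposition~\ref{prop:PBW-toroidal-Lie} is not compatible with $\kappa$ (no monomial in it lies in $\kappa$), so ``bijectivity follows from PBW'' is not immediate from it.
\end{enumerate}

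Three points need tightening.

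First, for $L_m\subseteq\kappa^m$, the device ``choose $h\in\mathfrak h$ with $\alpha(h)\neq0$'' fails for imaginary root vectors, since $\delta$ vanishes on $\mathfrak h=\mathrm{span}\{h_i\}_{i\in I}$. Use instead that $\mathfrak g$ is perfect: $h_i=[e_i,f_i]$ and $c\in\mathrm{span}\{h_i\}_{i\in I}$. Then $\mathfrak g\otimes\mathfrak m^m=[\mathfrak g\otimes\mathfrak m,\mathfrak g\otimes\mathfrak m^{m-1}]\subseteq\kappa^m$ by induction, and your worry about $c\otimes(t-1)^m$ disappears.

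Second, a filtration-adapted basis of $\mathbb C[t^{\pm1}]$ does exist, despite your hesitation. Take $b_n:=t^{-\lfloor n/2\rfloor}(t-1)^n\in\mathfrak m^n\setminus\mathfrak m^{n+1}$. The elements $b_0,\dots,b_n$ span $t^{-\lfloor n/2\rfloor}\mathbb C[t]_{\le n}$, so $\{b_n\}_{n\ge0}$ is a basis of $\mathbb C[t^{\pm1}]$ with $\mathfrak m^m=\mathrm{span}\{b_n : n\ge m\}$.
\begin{itemize}
\item Use the ordered basis $\{y_\lambda\otimes b_n\}$ of $L$. Straightening preserves weight because $[L_a,L_b]\subseteq L_{a+b}$, so the ordered monomials of weight $\ge m$ form a basis of $F^m$.
\item Hence those of weight exactly $m$ form a basis of $F^m/F^{m+1}$.
\item Since $b_n\equiv(t-1)^n \bmod \mathfrak m^{n+1}$, these correspond under $\pi$ to the PBW monomials of $U(\mathfrak g[u])$.
\end{itemize}
This makes the symmetrization detour unnecessary. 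As you wrote it, that detour gives only an abstract isomorphism of graded spaces whose pieces are infinite-dimensional. Without a compatibility check with $\pi$, it does not yield injectivity. Moreover, $\mathrm{gr}\,S(L)\cong S(\mathrm{gr}\,L)$ is itself most easily proved with the same adapted basis.

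Third, your check of $a\,s^at^bc+b\,s^at^bc'=0$ stopped at $a=0$. For $a\neq0$ and $b=0$ it kills $s^ac$ but leaves $s^ac'\neq0$. For $h\in\mathring{\mathfrak h}$ with $(h,h)\neq0$, the bracket $[h\otimes s^at^{-1},h\otimes t]$ in $\widehat{\mathfrak g}$ is a nonzero multiple of $s^ac'$. This central element survives in $\mathfrak t/\langle\gamma\rangle$ but maps to $0$ in $\mathfrak g[t^{\pm1}]$. So your proof, like the paper's, is really a proof for $U(\mathfrak g[t^{\pm1}])$. Its transfer to $U(\mathfrak g^{\mathrm{tor}})$ rests entirely on taking $\jmath$ to be an isomorphism, as the paper asserts, and a complete central check does not support that.
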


\begin{proof}
The relation $[z_1^{(0,m_1)}, z_2^{(0,m_2)}] = [z_1,z_2]^{(0,m_1+m_2)}$ holds in
$U(\mathfrak{g}[t^{\pm 1}])$ for any $z_1, z_2 \in \{h_i, e_\beta, f_\beta\}$
and $m_1, m_2 \in \mathbb{Z}_+$, which shows that $\pi$ is a homomorphism. For any
$k \in \mathbb{Z}$, one has $z^{(k,m)} - z^{(0,m)} \in \kappa^{m+1}$, since
$\jmath(z^{(k,m)} - z^{(0,m)}) = z(t^k-1)(t-1)^m = z(t^{k-1}+\cdots+1)(t-1)^{m+1}$.
It follows that the image of $z^{(0,m)}$ in $\kappa^m/\kappa^{m+1}$ depends only on $z$
and $m$. Bijectivity of $\pi$ then follows from the PBW theorem for $U(\mathfrak{g}[u])$
together with Proposition~\ref{prop:PBW-toroidal-Lie}.
\end{proof}

This proposition is the non-quantum analogue of our main Theorem~\ref{thm:main}: it says
that $U(\mathfrak{g}[u])$ arises as the associated graded of $U(\mathfrak{g}^{\mathrm{tor}})$
with respect to the filtration by powers of $\kappa$. The quantum version replaces
$U(\mathfrak{g}^{\mathrm{tor}})$ by $U_\hbar(\mathfrak{g}^{\mathrm{tor}})$ and
$U(\mathfrak{g}[u])$ by $Y_\hbar(\mathfrak{g})$.

\section{Quantum Toroidal Algebras}
\label{sec:quantum-toroidal}

Throughout this section we fix $\hbar \in \mathbb{C} \setminus \pi\sqrt{-1}\,\mathbb{Z}$
and set $q^n := \exp(n\hbar)$, $q := q^1$, $q_i := q^{d_i}$, and
\[
  [n]_i := \frac{q_i^n - q_i^{-n}}{q_i - q_i^{-1}}, \qquad
  [n]_i! := \prod_{k=1}^n [k]_i, \qquad
  \binom{n}{k}_i := \frac{[n]_i!}{[k]_i!\,[n-k]_i!},
\]
with the convention $[0]_i! = 1$. We assume throughout that $q$ is not a root of unity.

\subsection{Definition and triangular decomposition}
\label{sec:def-toroidal}

The quantum toroidal algebra is the quantum affinization of the quantum affine algebra
in the sense of \cite{Ji98-1}. We work with the version having trivial central element,
which differs slightly from \cite{GM12}.

\begin{defi}
\label{def:quantum-toroidal}
The \emph{quantum toroidal algebra} $U_\hbar(\mathfrak{g}^{\mathrm{tor}})$
is the $\hbar$-adically complete topological\footnote{This topological construction is necessary to ensure that the power series \eqref{eq:Phi}
defining $\Phi^\pm_i(z)$ (which involve $\exp(\pm H_{i,0}\hbar)$) are well-defined elements
of the algebra.} associative algebra over $\mathbb{C}[[\hbar]]$,
defined as the quotient of the $\hbar$-adically complete free algebra on generators
$\{X^\pm_{i,k},\, H_{i,k} \mid i \in I,\, k \in \mathbb{Z}\}$ by the closed two-sided
ideal generated by relations \eqref{eq:QT1}--\eqref{eq:QT6} below, where $r =r(i,j)= 1 - a_{ij}$
and $S_r$ is the symmetric group:
\begin{align}
  &[H_{i,k},\, H_{j,l}] = 0,
  \label{eq:QT1}\\
  &[H_{i,0},\, X^\pm_{j,k}] = \pm d_i a_{ij}\, X^\pm_{j,k},
  \label{eq:QT2}\\
  &[H_{i,k},\, X^\pm_{j,l}] = \pm \frac{1}{k}[k a_{ij}]_i\, X^\pm_{j,k+l}, \quad k \neq 0,
  \label{eq:QT3}\\
  &[X^+_{i,k},\, X^-_{j,l}] = \delta_{ij}\,\frac{\Phi^+_{i,k+l} - \Phi^-_{i,k+l}}{q - q^{-1}},
  \label{eq:QT4}\\
  &X^\pm_{i,k+1} X^\pm_{j,l} - q_i^{\pm a_{ij}} X^\pm_{j,l} X^\pm_{i,k+1}
  = q_i^{\pm a_{ij}} X^\pm_{i,k} X^\pm_{j,l+1} - X^\pm_{j,l+1} X^\pm_{i,k},
  \label{eq:QT5}\\
  &\sum_{\sigma \in S_r} \sum_{a=0}^{r} (-1)^a \binom{r}{a}_i
  X^\pm_{i,k_{\sigma(1)}} \cdots X^\pm_{i,k_{\sigma(a)}}
  X^\pm_{j,l}\,
  X^\pm_{i,k_{\sigma(a+1)}} \cdots X^\pm_{i,k_{\sigma(r)}} = 0, \quad i \neq j,
  \label{eq:QT6}
\end{align}
where the elements $\Phi^\pm_{i,\pm k}$ ($k \geq 0$) are defined by the generating series
\begin{equation}
  \label{eq:Phi}
  \Phi^\pm_i(z) = \sum_{k \geq 0} \Phi^\pm_{i,\pm k}\, z^{\pm k}
  = \exp(\pm H_{i,0}\hbar)\exp\!\left(\pm(q-q^{-1})\sum_{l \geq 1} H_{i,\pm l}\, z^{\pm l}\right).
\end{equation}
\end{defi}

\begin{remark}
\label{rem:Hernandez}
Setting $K_i^{\pm 1} = \exp(\pm H_{i,0}\hbar)$, relations \eqref{eq:QT1}--\eqref{eq:QT2}
become $K_i K_j = K_j K_i$ and $K_i X^\pm_{j,r} K_i^{-1} = q_i^{\pm a_{ij}} X^\pm_{j,r}$.
Our definition then coincides with \cite[Definition 3]{He05} via $x^+_{i,k} \to
\frac{q_i - q_i^{-1}}{q - q^{-1}} X^+_{i,k}$, $x^-_{i,k} \to X^-_{i,k}$, $h_{i,l}\to H_{i,l}$,
$k^\pm_i \to K^\pm_i$. The ``full'' quantum toroidal algebra with non-trivial central element
and derivation can be obtained by adjoining $q^c$ and $q^d$, where $c$ is the central element and $d$ the scaling derivation of $\mathfrak{g}$, as in \cite{Ji98-1}; we omit
this for simplicity.
\end{remark}

Let $U^\pm_\hbar(\mathfrak{g}^{\mathrm{tor}})$ and $U^0_\hbar(\mathfrak{g}^{\mathrm{tor}})$ denote the $\mathbb{C}[[\hbar]]$-subalgebras generated by $X^\pm_{i,k}$ and $H_{i,k}$ ($i \in I$, $k \in \mathbb{Z}$), respectively. By \cite[Theorem 2]{He05}:
\begin{prop}
\label{prop:triangular}
The multiplication map induces an isomorphism of $\mathbb{C}[[\hbar]]$-modules
\[
  U^-_\hbar(\mathfrak{g}^{\mathrm{tor}}) \otimes U^0_\hbar(\mathfrak{g}^{\mathrm{tor}})
  \otimes U^+_\hbar(\mathfrak{g}^{\mathrm{tor}})
  \xrightarrow{\;\sim\;} U_\hbar(\mathfrak{g}^{\mathrm{tor}}).
\]
\end{prop}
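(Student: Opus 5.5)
The strategy is the standard one for triangular decompositions of quantum affinizations, transported to the $\hbar$-adic setting. First prove that the multiplication map
\[
m\colon U^-_\hbar(\mathfrak{g}^{\mathrm{tor}}) \otimes U^0_\hbar(\mathfrak{g}^{\mathrm{tor}}) \otimes U^+_\hbar(\mathfrak{g}^{\mathrm{tor}}) \to U_\hbar(\mathfrak{g}^{\mathrm{tor}})
\]
is surjective. Then prove it is injective by building a large enough representation, in the manner of Hernandez's proof of \cite[Theorem 2]{He05}. The tensor products should be read as $\hbar$-adically completed tensor products over $\mathbb{C}[[\hbar]]$. Via the dictionary of Remark~\ref{rem:Hernandez}, most of the algebraic input can be quoted from \cite{He05}. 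What must actually be checked is that each step is compatible with $\hbar$-adic completion and closed ideals.

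\emph{Surjectivity.} Consider the $\mathbb{C}[[\hbar]]$-span $V$ of products $u^-u^0u^+$, with $u^\pm\in U^\pm_\hbar$ and $u^0\in U^0_\hbar$, and show that $V$ is stable under left multiplication by all generators.
\begin{itemize}
\item Left multiplication by $X^-_{i,k}$ preserves $V$ trivially.
\item For $H_{i,k}$, use \eqref{eq:QT2}--\eqref{eq:QT3}: commuting $H_{i,k}$ past a monomial in the $X^-$'s produces only linear combinations of $X^-$-monomials, so $V$ is preserved.
\item For $X^+_{i,k}$, use \eqref{eq:QT4}: moving $X^+_{i,k}$ past an $X^-$-monomial produces terms $\Phi^\pm_{i,\cdot}$. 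Each such term is a convergent series in the $H$'s by \eqref{eq:Phi}, and it can be moved past the remaining $X^-$ factors using the exponentiated form of \eqref{eq:QT3}.
\end{itemize}
Hence the closure of $V$ is a closed left ideal containing $1$, and so it equals the whole algebra. To remove the closure, note that the image of $m$ on the completed tensor product is closed. A convergent sum $\sum_n \hbar^n v_n$ with $v_n\in V$ is the image of the corresponding convergent series of tensors.

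\emph{Injectivity.} Build the algebras $U^\pm$ and $U^0$ abstractly:
\begin{itemize}
\item $\widetilde U^0$ is the completed symmetric algebra on the $H_{i,k}$, which is commutative by \eqref{eq:QT1};
\item $\widetilde U^\pm$ is the completed algebra on the $X^\pm_{i,k}$ with only relations \eqref{eq:QT5}--\eqref{eq:QT6}.
\end{itemize}
Then define an action of the generators of $U_\hbar(\mathfrak{g}^{\mathrm{tor}})$ on $M=\widetilde U^-\hat\otimes \widetilde U^0\hat\otimes \widetilde U^+$, mimicking left multiplication:
\begin{itemize}
\item $X^-$ acts by left multiplication on the first factor;
\item $H$ acts through the commutation relations;
\item $X^+_{i,k}$ acts by the derivation-like formula coming from \eqref{eq:QT4}, i.e.\ via Kashiwara-type operators $\partial_{i,k}$ on $\widetilde U^-$ twisted by $\Phi^\pm$.
\end{itemize}
The next step is to verify that \eqref{eq:QT1}--\eqref{eq:QT6} hold on $M$. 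Given this, $M$ is a module, and the composite $\widetilde U^-\hat\otimes\widetilde U^0\hat\otimes\widetilde U^+\to U_\hbar\to M$, $u\mapsto u\cdot(1\otimes1\otimes1)$, is the identity. This forces $m$ to be injective, and it also forces $\widetilde U^\pm\cong U^\pm_\hbar$ and $\widetilde U^0\cong U^0_\hbar$.

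The main obstacle is checking that the mixed relations hold on $M$. The first is \eqref{eq:QT4} for $X^+$ against $X^-$, which is the content of \cite{He05}. The second, and the subtle one, is that the $X^+$-action preserves the Serre-type relations \eqref{eq:QT5}--\eqref{eq:QT6} in $\widetilde U^-$. That is, each operator $\partial_{i,k}$ must map the ideal of those relations into itself. I would first reduce this to a generating-series identity, writing the Serre relation via currents $X^-_j(z)$. I would then check that its commutator with $X^+_i(w)$ vanishes modulo the ideal, using the $q$-commutation of $\Phi^\pm_i(w)$ with $X^-_j(z)$; this is the standard check from Hernandez's proof. Finally, for the $\hbar$-adic topology, all operators involved are continuous and $\hbar$-linear, so identities verified on the dense polynomial part extend to $M$.
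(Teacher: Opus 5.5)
Your plan follows essentially the same route as the paper. The paper gives no argument of its own and imports \cite[Theorem 2]{He05} through the dictionary of Remark~\ref{rem:Hernandez}; you reconstruct Hernandez's argument (straightening for surjectivity, a module on $\widetilde U^-\hat\otimes\widetilde U^0\hat\otimes\widetilde U^+$ for injectivity) and defer the same key input to \cite{He05}, namely that the Kashiwara-type operators preserve the ideal generated by \eqref{eq:QT5}--\eqref{eq:QT6}. Your version is, if anything, more careful: you note that Hernandez works algebraically over $\mathbb{C}$ at generic $q$, so the passage to $\hbar$-adically completed tensor products and closed ideals has to be checked rather than quoted, which the paper does not do.
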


The following anti-involution plays a key role in reducing proofs about $U^+_\hbar(\mathfrak{g}^{\mathrm{tor}})$ to $U^-_\hbar(\mathfrak{g}^{\mathrm{tor}})$.
\begin{prop}
\label{prop:anti-inv}
There is a $\mathbb{C}$-algebra anti-involution $\Theta$ of $U_\hbar(\mathfrak{g}^{\mathrm{tor}})$ defined by $X^\pm_{i,k} \mapsto X^\mp_{i,-k}$, $H_{i,k} \mapsto H_{i,-k}$, $\hbar \mapsto -\hbar$.
\end{prop}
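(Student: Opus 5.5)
The plan is to define $\Theta$ on the $\hbar$-adically complete free algebra $F$ on $\{X^\pm_{i,k},H_{i,k}\}$ and check that it preserves the closed ideal $J$ generated by \eqref{eq:QT1}--\eqref{eq:QT6}. On $F$, let $\Theta$ be the unique continuous $\mathbb{C}$-linear map that is semilinear along $\hbar\mapsto-\hbar$ on $\mathbb{C}[[\hbar]]$ and reverses products. It sends $X^\pm_{i,k}\mapsto X^\mp_{i,-k}$ and $H_{i,k}\mapsto H_{i,-k}$. This is well defined because $\hbar\mapsto-\hbar$ is a continuous ring automorphism of $\mathbb{C}[[\hbar]]$ and reversal is an anti-automorphism of the free algebra, and it extends to the completion. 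It is clearly involutive on generators and scalars. So once $\Theta(J)\subseteq J$, it descends to an anti-involution of $U_\hbar(\mathfrak{g}^{\mathrm{tor}})$. Under the substitution we have $q\mapsto q^{-1}$ and $q_i\mapsto q_i^{-1}$, while $[n]_i$, $\binom{n}{k}_i$ and $[ka_{ij}]_i$ are invariant. Also $q-q^{-1}\mapsto -(q-q^{-1})$.

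We then check each relation in turn.
\begin{enumerate}[label=(\alph*)]
\item For \eqref{eq:QT1}, an anti-homomorphism sends $[a,b]$ to $[\Theta b,\Theta a]=-[\Theta a,\Theta b]$, so the relation is preserved.
\item For \eqref{eq:QT2}, the image is $-[H_{i,0},X^\mp_{j,-k}]=\pm d_ia_{ij}X^\mp_{j,-k}$, which is \eqref{eq:QT2} for $X^\mp$.
\item For \eqref{eq:QT3}, the image is $-[H_{i,-k},X^\mp_{j,-l}]=\pm\frac1k[ka_{ij}]_iX^\mp_{j,-k-l}$. Writing $\frac1k=-\frac1{(-k)}$, this becomes $\mp\frac1{-k}[-ka_{ij}]_i(-1)X^{\mp}_{j,-k-l}$. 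Since $[-n]_i=-[n]_i$, this matches \eqref{eq:QT3} with sign $\mp$ and index $-k$. The signs need care but work out.
\item For \eqref{eq:QT5}, the image of the $+$ relation, after reversing products and inverting $q_i$, is
\[X^-_{j,-l}X^-_{i,-k-1}-q_i^{-a_{ij}}X^-_{i,-k-1}X^-_{j,-l}=q_i^{-a_{ij}}X^-_{j,-l-1}X^-_{i,-k}-X^-_{i,-k}X^-_{j,-l-1}.\]
Setting $k'=-k-1$ and $l'=-l-1$, this is, up to rearrangement, the $-$ version of \eqref{eq:QT5} with $i,j$ swapped. Here one uses $d_ia_{ij}=d_ja_{ji}$, so that $q_i^{a_{ij}}=q_j^{a_{ji}}$.
\item For \eqref{eq:QT6}, reversing a word turns the sum over $a$ into the sum over $r-a$. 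Since $\binom{r}{a}_i=\binom{r}{r-a}_i$, $(-1)^r$ is an overall factor, and the sum over $\sigma$ absorbs the reordering, the relation is preserved.
\end{enumerate}

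The step I expect to need the most care is \eqref{eq:QT4}, because it involves the series \eqref{eq:Phi}. Since $\Theta$ restricted to the commutative closed subalgebra generated by the $H$'s is a continuous ring homomorphism twisted by $\hbar\mapsto-\hbar$, we get
\[
\Theta\bigl(\Phi^\pm_i(z)\bigr)=\exp(\mp H_{i,0}\hbar)\exp\Bigl(\mp(q-q^{-1})\textstyle\sum_{l\ge1}H_{i,\mp l}z^{\pm l}\Bigr).
\]
Substituting $w=z^{-1}$, this equals $\Phi^\mp_i(w)$, so $\Theta(\Phi^\pm_{i,\pm m})=\Phi^\mp_{i,\mp m}$. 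Hence applying $\Theta$ to \eqref{eq:QT4} gives
\[-[X^+_{j,-l},X^-_{i,-k}]=\delta_{ij}\frac{\Phi^-_{i,-k-l}-\Phi^+_{i,-k-l}}{-(q-q^{-1})},\]
which is exactly \eqref{eq:QT4} at indices $(-l,-k)$. Convergence of each coefficient $\Phi^\pm_{i,\pm m}$ in the $\hbar$-adic topology (each is a finite polynomial in the $H$'s times $\exp(\pm H_{i,0}\hbar)$) justifies applying $\Theta$ termwise. Continuity of $\Theta$ then gives $\Theta(J)\subseteq J$ for the closure as well, which completes the argument.
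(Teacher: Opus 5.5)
Your proposal is correct and follows the same route as the paper. The paper only asserts that $\Theta$ is compatible with \eqref{eq:QT1}--\eqref{eq:QT6} and with the series \eqref{eq:Phi}, and you carry out that relation-by-relation check explicitly.

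There is one sign slip to fix. Since $\Theta([a,b])=[\Theta b,\Theta a]$, the image of the left side of \eqref{eq:QT4} is $[X^+_{j,-l},X^-_{i,-k}]$, not its negative, and only with this correction is your displayed equation exactly \eqref{eq:QT4} at indices $(-l,-k)$. Relatedly, $\Theta$ reverses the order of the two exponential factors in $\Phi^\pm_i(z)$. This is harmless only modulo the \eqref{eq:QT1} part of the ideal, which you have already shown to be $\Theta$-stable.
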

\begin{proof}
One checks directly that $\Theta$ is compatible with each defining relation
\eqref{eq:QT1}--\eqref{eq:QT6} and with the generating series \eqref{eq:Phi}.
\end{proof}

\subsection{Braid group actions}
\label{sec:braid-toroidal}

The quantum toroidal algebra admits a rich family of automorphisms, which we use to define
quantum toroidal root vectors generalizing the classical ones of Section~\ref{sec:Weyl}.

The braid group $\mathfrak{B}$ associated to $W^e$ acts on the \emph{quantum affine subalgebra}
$U_\hbar(\mathfrak{g}) \subset U_\hbar(\mathfrak{g}^{\mathrm{tor}})$ (generated by
$X^\pm_{i,0}$ and $H_{i,0}$ for $i \in I$) via automorphisms $T_i := T_{r_i}$
and $T_\eta$ ($\eta \in \Gamma$) as in \cite{Be94, L89}; see \cite[Section 3.2]{He05}
for the explicit formulas in our notation.

Following \cite[Section 4]{La24}, the \emph{toroidal braid group} $\mathfrak{B}^{\mathrm{tor}}$
is generated by $\mathfrak{B}$, elements $\mathcal{X}_{\omega^\vee}$ ($\omega^\vee \in \mathring{P}^\vee$),
and $\Gamma$, subject to the relations
\begin{equation*}
  T_i^{-1} \mathcal{X}_{\omega^\vee} T_i^{-1} = \mathcal{X}_{r_i(\omega^\vee)}
  \;\text{ if } \alpha_i(\omega^\vee) = 1, \quad
  T_i \mathcal{X}_{\omega^\vee} = \mathcal{X}_{\omega^\vee} T_i
  \;\text{ if } \alpha_i(\omega^\vee) = 0,
\end{equation*}
and $\eta T_i \eta^{-1} = T_{\eta(i)}$, $\eta \mathcal{X}_{\omega^\vee} \eta^{-1} =
\mathcal{X}_{\eta(\omega^\vee)}$ for $\eta \in \Gamma$. Here $\mathring{P}^\vee$ is the coweight lattice of $\mathring{\mathfrak{g}}$,
with fundamental coweights $\omega^\vee_i$ ($i \in \mathring{I}$) defined by $\alpha_j(\omega^\vee_i) = \delta_{ij}$. The group $\mathfrak{B}^{\mathrm{tor}}$ acts on $U_\hbar(\mathfrak{g}^{\mathrm{tor}})$
by algebra automorphisms: $T_i$ extends its action from $U_\hbar(\mathfrak{g})$,
$\eta$ acts as $T_\eta$, and each abstract generator $\mathcal{X}_{\omega^\vee_i}$
($i \in \mathring{I}$) is represented by the algebra automorphism
$\mathcal{X}_i \circ \mathcal{X}_0^{-n_i}$,
where $\mathcal{X}_i$ ($i \in \mathring{I}$) and $\mathcal{X}_0$ denote specific
automorphisms of $U_\hbar(\mathfrak{g}^{\mathrm{tor}})$ constructed explicitly
in \cite[Section~4]{La24}, and $n_i \in \mathbb{Z}_{\geq 1}$ are the Kac labels
defined by $\delta = \sum_{i \in I} n_i \alpha_i$.

\begin{defi}
\label{def:quantum-root-vectors}
For a real positive root $\alpha = \eta r_{i_1}\cdots r_{i_l}(\alpha_j) \in R^+_{\mathrm{re}}$
and $k \in \mathbb{Z}$, define the \emph{quantum toroidal root vectors}
\[
  X^\pm_{\alpha,k} := T_\eta T_{i_1} \cdots T_{i_l}(X^\pm_{j,k}).
\]
For imaginary roots, set $X^\pm_{n\delta(i),k} := -X^\pm_{n\delta-\alpha_i,k} X^\pm_{i,0}
+ q_i^{-2} X^\pm_{i,0} X^\pm_{n\delta-\alpha_i,k}$ for $n > 0$ and $i \in I$.
\end{defi}

Introduce a total order on $\{X^\pm_{\beta,k},\, H_{i,k} \mid \beta \in R^+,\, i \in I,\, k \in \mathbb{Z}\}$
by
\begin{equation}
  \label{eq:order-toroidal}
  X^-_{\beta,k} \prec H_{i,l} \prec X^+_{\beta',k'} \quad \forall\, \beta,\beta' \in R^+,\
  i \in I,\ k,k',l \in \mathbb{Z},
\end{equation}
with $H_{i,k} \prec H_{i',k'}$ if $i < i'$ or ($i = i'$ and $k \leq k'$), and
$X^+_{\beta,k} \prec X^+_{\beta',k'}$ (resp.\ $X^-_{\beta,k} \prec X^-_{\beta',k'}$)
if $(\beta,k) \prec (\beta',k')$ (resp.\ $(-\beta,k) \prec (-\beta',k')$) lexicographically.
Denote by $B_\hbar(U, \prec)$ the set of all ordered monomials in these generators (in the order \eqref{eq:order-toroidal}).

\subsection{Classical limit}
\label{sec:classical-limit}
 
We now show that $U_\hbar(\mathfrak{g}^{\mathrm{tor}})$ specializes to
$U(\mathfrak{g}^{\mathrm{tor}})$ as $\hbar \to 0$. 
 
\begin{theorem}
\label{thm:classical-limit}
There is a $\mathbb{C}$-algebra isomorphism
\[
  \Upsilon \colon U(\mathfrak{g}^{\mathrm{tor}}) \xrightarrow{\;\sim\;}
  U_\hbar(\mathfrak{g}^{\mathrm{tor}})/\hbar\, U_\hbar(\mathfrak{g}^{\mathrm{tor}})
\]
between the universal enveloping algebra $U(\mathfrak{g}^{\mathrm{tor}})$ and the classical limit $U_\hbar(\mathfrak{g}^{\mathrm{tor}})/\hbar\, U_\hbar(\mathfrak{g}^{\mathrm{tor}})$.
\end{theorem}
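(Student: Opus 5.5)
We construct maps in both directions and check that they are mutually inverse. Write $\bar U := U_\hbar(\mathfrak{g}^{\mathrm{tor}})/\hbar U_\hbar(\mathfrak{g}^{\mathrm{tor}})$ and $\bar z$ for the image of $z$ in $\bar U$.

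\emph{Reduction of the relations modulo $\hbar$.} Since $U_\hbar(\mathfrak{g}^{\mathrm{tor}})$ is a quotient of the $\hbar$-adically complete free algebra by a closed ideal, $\bar U$ is the quotient of the ordinary free $\mathbb{C}$-algebra on $\bar X^\pm_{i,k},\bar H_{i,k}$ by the reductions modulo $\hbar$ of \eqref{eq:QT1}--\eqref{eq:QT6}. To see this, note that the closure of an ideal $J$ satisfies $\bar J+\hbar F=J+\hbar F$, where $F$ is the complete free algebra, so the closure causes no trouble. Next we reduce each relation modulo $\hbar$, using $q\equiv 1$ and $[n]_i\equiv n$. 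The series $\Phi^\pm_i(z)$ gives $\Phi^\pm_{i,0}\equiv 1\pm\hbar H_{i,0}$ and $\Phi^\pm_{i,\pm l}\equiv (q-q^{-1})H_{i,\pm l}$ modulo $\hbar^2$. Dividing by $q-q^{-1}\sim 2\hbar$ turns \eqref{eq:QT4} into $[\bar X^+_{i,k},\bar X^-_{j,l}]=\delta_{ij}\bar H_{i,k+l}$, suitably normalised. Indeed, the $\hbar$-division is legitimate because \eqref{eq:QT4} is imposed with the quotient already formed inside the $\mathbb{C}[[\hbar]]$-algebra.

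The remaining relations reduce as follows. \eqref{eq:QT2} and \eqref{eq:QT3} become $[\bar H_{i,k},\bar X^\pm_{j,l}]=\pm d_ia_{ij}\bar X^\pm_{j,k+l}$. \eqref{eq:QT5} becomes $[\bar X^\pm_{i,k+1},\bar X^\pm_{j,l}]=[\bar X^\pm_{i,k},\bar X^\pm_{j,l+1}]$. \eqref{eq:QT6} becomes the symmetrised Serre relation $\sum_\sigma\mathrm{ad}\bar X^\pm_{i,k_{\sigma(1)}}\cdots\mathrm{ad}\bar X^\pm_{i,k_{\sigma(r)}}(\bar X^\pm_{j,l})=0$.

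We therefore obtain a presentation of $\bar U$ as the enveloping algebra of a Lie algebra $\mathfrak{L}$. This $\mathfrak{L}$ is given by generators $x^\pm_{i,k},h_{i,k}$ subject to commuting $h$'s, the $h$--$x$ relations, $[x^+_{i,k},x^-_{j,l}]=\delta_{ij}h_{i,k+l}$, the "shift" relation, and the symmetrised Serre relations.

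\emph{Identifying $\mathfrak{L}$ with $\mathfrak{g}^{\mathrm{tor}}$.} We define $\mathfrak{t}\to\mathfrak{L}$ by $\gamma\mapsto 0$, $e_i^{(k)}\mapsto x^+_{i,k}$, $f_i^{(k)}\mapsto -x^-_{i,k}$, and $h_i^{(k)}\mapsto$ a rescaling of $h_{i,k}$ by $d_i$. We then check \eqref{eq:tor1}--\eqref{eq:tor6}. The only relations not immediate are \eqref{eq:tor5} and \eqref{eq:tor6}, where the shift relation and the symmetrised Serre relations must be converted into $[e_i^{(k)},e_i^{(l)}]=0$ and $(\mathrm{ad}\,e^{(0)}_i)^{1-a_{ij}}e^{(k)}_j=0$. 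For \eqref{eq:tor5}, the shift relation with $i=j$ gives $[x_{i,k+1},x_{i,l}]=[x_{i,k},x_{i,l+1}]$, hence $[x_{i,k},x_{i,l}]$ depends only on $k+l$ and is antisymmetric. Applying $\mathrm{ad}\,h_{i,m}$ then shows that these brackets vanish, using the standard trick for Lie algebras of $\mathfrak{sl}_2$-loop type (cf.\ Garland's argument or \cite{MRY90}). For \eqref{eq:tor6}, we take all $k_a=0$ in the symmetrised Serre relation, and then shift $j$'s mode via $\mathrm{ad}\,h_{j,k}$ when $a_{jj}=2\ne 0$.

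Conversely, we define $\mathfrak{L}\to\mathfrak{g}^{\mathrm{tor}}$ by the inverse assignment. Its well-definedness is checked using Proposition \ref{prop:toroidal-iso}: in $\mathring{\mathfrak{g}}\otimes\mathbb{C}[t^{\pm1},s^{\pm1}]$ modulo centre, every relation of $\mathfrak{L}$ is an evident identity. Both maps are inverse on generators, so $\mathfrak{L}\cong\mathfrak{g}^{\mathrm{tor}}$, and $\Upsilon:U(\mathfrak{g}^{\mathrm{tor}})\to\bar U$ is an isomorphism.

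\emph{Main obstacle.} The delicate step is the first one, showing that the reductions modulo $\hbar$ of the defining relations generate exactly the kernel of the map from the free algebra to $\bar U$. The concern is that the closed ideal might contain elements $\hbar y$ with $y$ not in the ideal, making the reduction of \eqref{eq:QT4} after division by $q-q^{-1}$ ambiguous. I would handle this by working directly with the presentation: $\bar U=F/(\bar J+\hbar F)$ is the $\mathbb{C}$-algebra generated by the $\bar X,\bar H$ modulo the images of the relations. Relation \eqref{eq:QT4} as written involves the quotient by $q-q^{-1}$ only formally, since $\Phi^+_{i,m}-\Phi^-_{i,m}\in(q-q^{-1})F$ holds already in the free algebra by \eqref{eq:Phi}. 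Hence no torsion issue arises at this stage, and torsion-freeness is deferred to the later PBW argument.
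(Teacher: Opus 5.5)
You follow the paper's route: reduce \eqref{eq:QT1}--\eqref{eq:QT6} modulo $\hbar$, then build mutually inverse maps on generators. Two of your additions are more careful than the paper. The first is the observation $\bar J+\hbar F=J+\hbar F$, together with the divisibility of $\Phi^+_{i,m}-\Phi^-_{i,m}$ by $q-q^{-1}$ already in the free algebra. The second is the explicit derivation of \eqref{eq:tor5} and \eqref{eq:tor6}, which the paper only asserts. Your $\mathrm{ad}\,h$ steps there are unnecessary: dependence on $k+l$ plus antisymmetry already forces $[x_{i,k},x_{i,l}]=0$, and $l$ is free in \eqref{eq:QT6}.

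\textbf{First gap: the reduction of \eqref{eq:QT3}.} Your claim that \eqref{eq:QT3} reduces to $\pm d_ia_{ij}$ is false for the relation as printed. In fact $\frac1k[ka_{ij}]_i\to a_{ij}$, and the paper's own proof records $a_{ij}$. This is fatal whenever some $d_i\neq1$. In $\bar U$ one has:
\begin{itemize}
\item $[\bar X^+_{i,1},\bar X^-_{i,-1}]=\bar H_{i,0}$ and $[\bar X^-_{i,-1},\bar X^+_{i,0}]=-\bar H_{i,-1}$, by \eqref{eq:QT4};
\item $[\bar H_{i,-1},\bar X^+_{i,1}]=2\bar X^+_{i,0}$, by \eqref{eq:QT3};
\item $[\bar X^+_{i,1},\bar X^+_{i,0}]=0$, by \eqref{eq:QT5}.
\end{itemize}
The Jacobi identity then gives $[\bar H_{i,0},\bar X^+_{i,0}]=2\bar X^+_{i,0}$, whereas \eqref{eq:QT2} gives $2d_i\bar X^+_{i,0}$. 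Hence $\bar X^+_{i,0}=0$, so $\bar H_{i,0}=0$. By \eqref{eq:QT2}--\eqref{eq:QT4} and connectedness of the diagram, every generator then vanishes, so $\bar U=\mathbb{C}$ in types $B_N^{(1)},C_N^{(1)},F_4^{(1)},G_2^{(1)}$.

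Your reduction is correct only for the normalization
$[H_{i,k},X^\pm_{j,l}]=\pm\frac{q^{kd_ia_{ij}}-q^{-kd_ia_{ij}}}{k(q-q^{-1})}X^\pm_{j,k+l}$
of \cite{He05}, which Remark~\ref{rem:Hernandez} claims to reproduce. You must adopt it explicitly. The paper's proof has the same inconsistency, and it also reduces \eqref{eq:QT4} to $d_i\bar H_{i,k+l}$ rather than $\bar H_{i,k+l}$.

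Your scalars also need repair. With $[\bar X^+_{i,k},\bar X^-_{i,l}]=\bar H_{i,k+l}$, the choice $e_i^{(k)}\mapsto x^+_{i,k}$, $f_i^{(k)}\mapsto-x^-_{i,k}$ forces $h_i^{(k)}\mapsto h_{i,k}$ via \eqref{eq:tor4}. That violates \eqref{eq:tor3} by a factor $d_i$. Use $d_i^{-1/2}$, $-d_i^{-1/2}$, $d_i^{-1}$, as the paper does.

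\textbf{Second gap: the converse map.} Checking the relations of $\mathfrak{L}$ ``in $\mathring{\mathfrak{g}}\otimes\mathbb{C}[t^{\pm1},s^{\pm1}]$ modulo centre'' only produces a map into $\widehat{\mathfrak{g}}/\mathcal{Z}$. But $\mathfrak{g}^{\mathrm{tor}}=\widehat{\mathfrak{g}}/\mathbb{C}c'$ retains every central element $s^at^bc$, $s^at^bc'$ with $(a,b)\neq(0,0)$, and $h_0^{(k)}$ itself contains the summand $t^kc$. So your map cannot serve as an inverse to $\mathfrak{g}^{\mathrm{tor}}\to\mathfrak{L}$; you must track the 2-cocycle.

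In most types this works: each cocycle term vanishes, is a multiple of $c'$, or reproduces the summand $t^{k+l}c$ of $h_0^{(k+l)}$. It fails in type $A_1^{(1)}$. There $e_0^{(k)}$ is a multiple of $f\otimes st^k$ and $(f,e)\neq0$. A direct computation shows that $[e_0^{(k+1)},e_1^{(l)}]-[e_0^{(k)},e_1^{(l+1)}]$ is a nonzero multiple of $s\,t^{k+l+1}c'$ in $\mathfrak{g}^{\mathrm{tor}}$. Meanwhile the reduction of \eqref{eq:QT5} for $(i,j)=(0,1)$ forces its image under $\Upsilon$ to vanish. So $\Upsilon$ is not injective in type $A_1^{(1)}$. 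That type must be excluded, or $\mathfrak{g}^{\mathrm{tor}}$ replaced by a further central quotient. The paper's ``same verification, read in reverse'' skips this point as well.
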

 
\begin{proof}
The strategy of the proof is to construct mutually inverse homomorphisms between
$U(\mathfrak{g}^{\mathrm{tor}})$ and 
$U_\hbar(\mathfrak{g}^{\mathrm{tor}})/\hbar\,U_\hbar(\mathfrak{g}^{\mathrm{tor}})$, following the approach
of \cite[Proposition~7.2]{CJKT23}
for quantum affinizations of Kac--Moody algebras.
 
\medskip
\noindent\emph{Construction of $\Upsilon$.}
Define $\Upsilon \colon U(\mathfrak{g}^{\mathrm{tor}}) \to
U_\hbar(\mathfrak{g}^{\mathrm{tor}})/\hbar\,U_\hbar(\mathfrak{g}^{\mathrm{tor}})$
on generators by
\[
  e_i^{(k)} \mapsto d_i^{-1/2}\,\overline{X^+_{i,k}}, \qquad
  f_i^{(k)} \mapsto -d_i^{-1/2}\,\overline{X^-_{i,k}}, \qquad
  h_i^{(k)} \mapsto d_i^{-1}\,\overline{H_{i,k}}.
\]
To verify that $\Upsilon$ is well defined, we check that the defining
relations \eqref{eq:tor1}--\eqref{eq:tor6} of $\mathfrak{g}^{\mathrm{tor}}$
(Definition~\ref{def:toroidal}, with $\gamma = 0$) are satisfied by the
images of the generators.  Each quantum relation
\eqref{eq:QT1}--\eqref{eq:QT6} reduces modulo $\hbar$ to the corresponding
classical relation:
\begin{itemize}
  \item Relation \eqref{eq:QT1} gives
    $[\overline{H_{i,k}},\,\overline{H_{j,l}}] = 0$,
    recovering \eqref{eq:tor2} with $\gamma = 0$.
 
  \item Relations \eqref{eq:QT2} and \eqref{eq:QT3} give
    $[\overline{H_{i,0}},\,\overline{X^\pm_{j,k}}]
    = \pm d_i a_{ij}\,\overline{X^\pm_{j,k}}$
    and, for $k \neq 0$,
    $[\overline{H_{i,k}},\,\overline{X^\pm_{j,l}}]
    = \pm a_{ij}\,\overline{X^\pm_{j,k+l}}$
    (using $\tfrac{1}{k}[ka_{ij}]_i \to a_{ij}$ as $\hbar \to 0$),
    recovering \eqref{eq:tor3}.
 
  \item Relation \eqref{eq:QT4} gives
    $[\overline{X^+_{i,k}},\,\overline{X^-_{j,l}}]
    = \delta_{ij}\,d_i\,\overline{H_{i,k+l}}$
    (using $\tfrac{\Phi^+_{i,m} - \Phi^-_{i,m}}{q - q^{-1}}
    \to d_i\, H_{i,m}$ as $\hbar \to 0$),
    recovering \eqref{eq:tor4} with $\gamma = 0$.
 
  \item Relation \eqref{eq:QT5} gives
    $[\overline{X^\pm_{i,k+1}},\,\overline{X^\pm_{j,l}}]
    = [\overline{X^\pm_{i,k}},\,\overline{X^\pm_{j,l+1}}]$
    (using $q_i^{\pm a_{ij}} \to 1$),
    recovering \eqref{eq:tor5}.
 
  \item Relation \eqref{eq:QT6} reduces to
    $(\mathrm{ad}\,\overline{X^\pm_{i,0}})^{1-a_{ij}}\,
    \overline{X^\pm_{j,k}} = 0$ for $i \neq j$
    (using $\tbinom{r}{a}_i \to \tbinom{r}{a}$),
    recovering \eqref{eq:tor6}.
\end{itemize}
Hence $\Upsilon$ is a well-defined surjective algebra homomorphism.
 
\medskip
\noindent\emph{Construction of $\Psi$.}
Define $\Psi \colon U_\hbar(\mathfrak{g}^{\mathrm{tor}}) \to
U(\mathfrak{g}^{\mathrm{tor}})$ on generators by
\[
  X^+_{i,k} \mapsto d_i^{1/2}\, e_i^{(k)}, \qquad
  X^-_{i,k} \mapsto -d_i^{1/2}\, f_i^{(k)}, \qquad
  H_{i,k} \mapsto d_i\, h_i^{(k)}, \qquad
  \hbar \mapsto 0.
\]
The same verification, read in reverse, shows that the quantum relations
\eqref{eq:QT1}--\eqref{eq:QT6} are satisfied in $U(\mathfrak{g}^{\mathrm{tor}})$
after setting $\hbar = 0$.  Since $\Psi(\hbar) = 0$, the map factors through
the quotient, inducing a surjective homomorphism
$\bar\Psi \colon U_\hbar(\mathfrak{g}^{\mathrm{tor}})/\hbar\,
U_\hbar(\mathfrak{g}^{\mathrm{tor}}) \twoheadrightarrow
U(\mathfrak{g}^{\mathrm{tor}})$.
 
\medskip
\noindent\emph{The maps are mutual inverses.}
On generators:
\[
  \bar\Psi \circ \Upsilon\bigl(e_i^{(k)}\bigr)
  = \bar\Psi\bigl(d_i^{-1/2}\,\overline{X^+_{i,k}}\bigr)
  = d_i^{-1/2} \cdot d_i^{1/2}\, e_i^{(k)} = e_i^{(k)},
\]
similarly for $f_i^{(k)}$ and $h_i^{(k)}$.
Hence $\bar\Psi \circ \Upsilon = \mathrm{id}_{U(\mathfrak{g}^{\mathrm{tor}})}$.
Conversely,
\[
  \Upsilon \circ \bar\Psi\bigl(\overline{X^+_{i,k}}\bigr)
  = \Upsilon\bigl(d_i^{1/2}\, e_i^{(k)}\bigr)
  = d_i^{1/2} \cdot d_i^{-1/2}\,\overline{X^+_{i,k}}
  = \overline{X^+_{i,k}},
\]
similarly for $\overline{X^-_{i,k}}$ and $\overline{H_{i,k}}$.
Hence $\Upsilon \circ \bar\Psi =
\mathrm{id}_{U_\hbar/\hbar\,U_\hbar}$.
Therefore $\Upsilon$ is an isomorphism with inverse $\bar\Psi$.
\end{proof}

\subsection{PBW basis}
\label{sec:PBW-toroidal}
 
The main algebraic tool for establishing our PBW basis is the following topological version of Nakayama's lemma (see \cite[Section 4 \& 7]{Eis95} for the standard version), which provides a criterion for lifting bases
from the specialization $M/\hbar M$ to the complete module~$M$.
 
\begin{lemm}[Topological Nakayama]
\label{lem:top-nakayama}
Let $M$ be an $\hbar$-adically complete, separated, and $\hbar$-torsion-free
$\mathbb{C}[[\hbar]]$-module, and
let $\{x_\xi \mid \xi \in S\}$ be a (possibly infinite) subset whose images
$\{\bar{x}_\xi\}$ form a $\mathbb{C}$-basis of $M/\hbar M$. Then
$M \cong (M/\hbar M)[[\hbar]]$ as $\mathbb{C}[[\hbar]]$-modules, with $\{x_\xi\}$ as
topological $\mathbb{C}[[\hbar]]$-basis.
In particular, $\{x_\xi\}$ is linearly independent over $\mathbb{C}[[\hbar]]$, and every
$m \in M$ has a unique convergent expansion $m = \sum_\xi f_\xi(\hbar)\, x_\xi$ with
$f_\xi \in \mathbb{C}[[\hbar]]$ and $f_\xi = 0$ for all but finitely many $\xi$ at each
power of~$\hbar$.
\end{lemm}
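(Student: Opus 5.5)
The plan is the standard two-step argument: first show that every element is a convergent combination of the $x_\xi$, then show that this combination is unique.

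\emph{Spanning.} Let $N$ be the closure of the $\mathbb{C}[[\hbar]]$-span of $\{x_\xi\}$. More precisely, $N$ is the set of all sums $\sum_\xi f_\xi(\hbar)x_\xi$ in which, at each power of $\hbar$, only finitely many $f_\xi$ contribute. Completeness of $M$ guarantees that such sums converge. Take $m\in M$. Since the $\bar x_\xi$ span $M/\hbar M$, we can write $m=\sum_\xi c^{(0)}_\xi x_\xi+\hbar m_1$ with finitely many $c^{(0)}_\xi\in\mathbb{C}$ nonzero. Applying the same step to $m_1$, and continuing inductively, gives for every $n$
\[
m=\sum_\xi\Big(\sum_{j<n}c^{(j)}_\xi\hbar^j\Big)x_\xi+\hbar^n m_n .
\]
Set $f_\xi=\sum_j c^{(j)}_\xi\hbar^j$. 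For each $n$ only finitely many $\xi$ have $f_\xi\not\equiv 0 \pmod{\hbar^n}$, so the series $\sum_\xi f_\xi x_\xi$ converges. Its difference from $m$ lies in $\hbar^nM$ for every $n$, hence in $\bigcap_n\hbar^nM$. This intersection is zero because $M$ is separated. Therefore $m=\sum_\xi f_\xi x_\xi$.

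\emph{Uniqueness.} This is the step that uses torsion-freeness, and it is the main point to get right. Suppose $\sum_\xi f_\xi x_\xi=0$ with not all $f_\xi$ zero. Let $n$ be the smallest $\hbar$-adic valuation among the $f_\xi$ and write $f_\xi=\hbar^n g_\xi$. Then $\hbar^n\sum_\xi g_\xi x_\xi=0$, where the inner sum is still convergent. Since $M$ has no $\hbar$-torsion, $\sum_\xi g_\xi x_\xi=0$. Reducing modulo $\hbar$ gives $\sum_\xi g_\xi(0)\bar x_\xi=0$. This is a finite sum, because only finitely many $g_\xi$ are nonzero modulo $\hbar$. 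By the choice of $n$, at least one $g_\xi(0)$ is nonzero, which contradicts the linear independence of the $\bar x_\xi$ in $M/\hbar M$. One technical point must be checked here: reduction modulo $\hbar$ commutes with these convergent sums. It does, since all terms whose coefficients lie in $\hbar\mathbb{C}[[\hbar]]$ together sum to an element of $\hbar M$. This uses that $\hbar M$ is closed, which follows from completeness and separatedness.

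\emph{The isomorphism.} The map $(M/\hbar M)[[\hbar]]\to M$ sends $\sum_j \hbar^j v_j$, where each $v_j=\sum_\xi c^{(j)}_\xi\bar x_\xi$, to $\sum_\xi\big(\sum_j c^{(j)}_\xi\hbar^j\big)x_\xi$. It is $\mathbb{C}[[\hbar]]$-linear and well defined, because the finiteness condition at each power of $\hbar$ matches on both sides. By the two steps above it is bijective, which proves the lemma.
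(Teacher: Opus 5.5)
Your proposal is correct and follows the same route as the paper: successive $\hbar$-adic approximation for spanning, then uniqueness by dividing out the minimal power of $\hbar$ (using torsion-freeness) and reducing modulo $\hbar$, then assembling the module isomorphism. You are in fact more careful than the paper at two points. You prove uniqueness for infinite convergent sums, whereas the paper's Step~2 only treats finite sums, which is not enough for injectivity. You also define the map out of $(M/\hbar M)[[\hbar]]$ through the canonical lifts $\bar x_\xi \mapsto x_\xi$ rather than ``any lift'', and this is what actually makes it well defined.
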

 
\noindent\begin{proof}
Let $X := \mathrm{span}_{\mathbb{C}[[\hbar]]}\{x_\xi\} \subseteq M$.
 
\smallskip
\noindent\emph{Step 1: Spanning (density).}
We show $M = X + \hbar^n M$ for any $n \geq 0$, by induction on $n$. The case $n = 0$
is trivial. Since $\{\bar{x}_\xi\}$ is a $\mathbb{C}$-basis of $M/\hbar M$, every element
of $M/\hbar M$ lifts to an element of $X$; hence $M = X + \hbar M$. Now assume
$M = X + \hbar^n M$. For any $m' \in \hbar^n M$, write $m' = \hbar^n m''$; applying the
base case to $m''$ yields $m'' = x + \hbar r$ with $x \in X$ and $r \in M$, so
$m' = \hbar^n x + \hbar^{n+1} r \in X + \hbar^{n+1} M$. This completes the induction.
 
For any $m \in M$, the above gives a sequence $(y_n)_{n \geq 0}$ with $y_n \in X$ and
$m - y_n \in \hbar^n M$. In particular, $y_{n+1} - y_n \in \hbar^n M$ for all $n$, so
the partial sums form a Cauchy sequence in the $\hbar$-adic topology. Since $M$ is
complete, $m = \lim_{n \to \infty} y_n$ belongs to the closure of $X$. As each $y_n$
is a $\mathbb{C}[[\hbar]]$-linear combination of the $x_\xi$, the limit $m$ has a
convergent expansion of the desired form.
 
\smallskip
\noindent\emph{Step 2: Linear independence.}
Suppose $\sum_{\xi \in F} f_\xi(\hbar)\, x_\xi = 0$ for a finite set $F \subset S$ and
nonzero $f_\xi \in \mathbb{C}[[\hbar]]$. Write $f_\xi = \hbar^{v_\xi} g_\xi$ with
$g_\xi(0) \neq 0$, and set $T = \min_\xi v_\xi$. Since $M$ is $\hbar$-torsion-free,
$\hbar$ acts injectively, so dividing by $\hbar^T$ gives
$\sum_\xi \hbar^{v_\xi - T} g_\xi\, x_\xi = 0$. Reducing modulo $\hbar$, the surviving
terms are those with $v_\xi = T$, yielding
$\sum_{\xi:\, v_\xi = T} g_\xi(0)\, \bar{x}_\xi = 0$ in $M/\hbar M$. Since
$\{\bar{x}_\xi\}$ is a $\mathbb{C}$-basis, $g_\xi(0) = 0$ for all such $\xi$,
contradicting $g_\xi(0) \neq 0$.
 
\smallskip
\noindent\emph{Step 3: Module isomorphism.}
The map $\tau \colon (M/\hbar M)[[\hbar]] \to M$ defined by
$\sum_{n \geq 0} \hbar^n \bar{y}_n \mapsto \lim_{t \to \infty} \sum_{n=0}^t \hbar^n y_n$
(where $y_n \in X$ is any lift of $\bar{y}_n$) is a well-defined $\mathbb{C}[[\hbar]]$-module
homomorphism: the limit converges by completeness, and is independent of the lifts since
$y_n - y_n' \in \hbar M$ implies $\hbar^n(y_n - y_n') \in \hbar^{n+1} M$. It is surjective
by Step~1 and injective by Step~2.
\end{proof}
 
We now apply the lemma to establish the PBW basis of the quantum toroidal algebra.
 
\begin{theorem}
\label{thm:PBW-toroidal}
The set $B_\hbar(U, \prec)$ of all ordered monomials in
$\{X^\pm_{\beta,k},\, H_{i,k} \mid \beta \in R^+,\, i \in I,\, k \in \mathbb{Z}\}$
forms a $\mathbb{C}[[\hbar]]$-basis of $U_\hbar(\mathfrak{g}^{\mathrm{tor}})$.
\end{theorem}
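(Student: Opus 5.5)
The plan is to apply the topological Nakayama lemma (Lemma~\ref{lem:top-nakayama}) to $M = U_\hbar(\mathfrak{g}^{\mathrm{tor}})$ with $\{x_\xi\} = B_\hbar(U,\prec)$. This requires three inputs: that $M$ is $\hbar$-adically complete and separated, that the images of $B_\hbar(U,\prec)$ form a $\mathbb{C}$-basis of $M/\hbar M$, and that $M$ is $\hbar$-torsion-free. Completeness and separatedness hold by construction (Definition~\ref{def:quantum-toroidal}), since $M$ is the quotient of a complete free algebra by a \emph{closed} ideal.

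\emph{Basis modulo $\hbar$.} By Theorem~\ref{thm:classical-limit} we have $M/\hbar M \cong U(\mathfrak{g}^{\mathrm{tor}})$ via $\bar\Psi$. I first check that $\bar\Psi$ sends each root vector $\overline{X^\pm_{\beta,k}}$ to a nonzero scalar multiple of $e^{(k)}_\beta$ or $f^{(k)}_\beta$ (modulo $\gamma$), and $\overline{H_{i,k}}$ to $d_i h_i^{(k)}$. For real roots this uses the fact that the Lusztig--Beck operators $T_i$, $T_\eta$ and the automorphisms $\mathcal{X}_i$ of \cite{La24} specialize at $\hbar=0$ to the classical $r_i$, $\eta$ and translation automorphisms acting on $\mathfrak{g}^{\mathrm{tor}}$. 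For imaginary roots, the $q$-commutator in Definition~\ref{def:quantum-root-vectors} specializes to $\pm[e_i,e^{(k)}_{n\delta-\alpha_i}]$. Proposition~\ref{prop:PBW-toroidal-Lie}, with $\gamma$ removed, then says that ordered monomials in these Lie algebra elements form a basis of $U(\mathfrak{g}^{\mathrm{tor}})$. One point needs care: the imaginary root vectors indexed by $i\in I$ must match the basis of $\mathfrak{g}_{n\delta}\otimes t^k$ used classically. Since $\dim\mathfrak{g}_{n\delta}=N$, I will restrict to $i\in\mathring{I}$, or otherwise verify linear independence, when forming the ordered set.

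\emph{Torsion-freeness.} This is the main obstacle, because a quotient of a complete free algebra by a closed ideal can acquire $\hbar$-torsion. My plan is as follows. First, use Step~1 of the proof of Lemma~\ref{lem:top-nakayama}, which needs only completeness and the spanning property mod $\hbar$, to show that $B_\hbar(U,\prec)$ topologically spans $M$. Second, construct a torsion-free module with a compatible $M$-action: a faithful family of representations that are topologically free over $\mathbb{C}[[\hbar]]$ and specialize to a faithful family for $U(\mathfrak{g}^{\mathrm{tor}})$. A natural choice is $\hbar$-adic deformations of Verma-type or vertex/evaluation-type representations. Alternatively, one can use the triangular decomposition (Proposition~\ref{prop:triangular}) to reduce to $U^\pm_\hbar$ and $U^0_\hbar$. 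The anti-involution $\Theta$ reduces $U^+_\hbar$ to $U^-_\hbar$, and $U^0_\hbar$ is a completed polynomial algebra, so it is clearly torsion-free. For $U^-_\hbar$, I would realize it inside a shuffle-type or $q$-Heisenberg-type algebra that is manifestly torsion-free, or use the pairing with $U^+_\hbar$ coming from a Drinfeld-type double.

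If an element $\sum f_\xi x_\xi$ with some $f_\xi(0)\neq0$ were killed by a power of $\hbar$, its image acting on the faithful module would contradict the independence of the images mod $\hbar$. Hence the monomials are linearly independent over $\mathbb{C}[[\hbar]]$, which gives torsion-freeness of $M$ as a topological free module. With all three hypotheses in place, Lemma~\ref{lem:top-nakayama} yields the theorem.
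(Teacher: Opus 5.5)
Your overall architecture matches the paper's: apply Lemma~\ref{lem:top-nakayama} to $B_\hbar(U,\prec)$, getting completeness from the closed-ideal construction and the basis modulo $\hbar$ from Theorem~\ref{thm:classical-limit}. The gap is the torsion-freeness step. It carries the whole content of the theorem, and you only announce it.

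Your reduction itself is sound. A family of separated, $\hbar$-torsion-free modules $V$ whose reductions $V/\hbar V$ form a faithful family for $U(\mathfrak{g}^{\mathrm{tor}})$ would, together with topological spanning, give both torsion-freeness and independence. But you never produce such a family. Each alternative you list rests on an unproved input of the same depth:
\begin{itemize}
\item that $U^0_\hbar$ is a completed polynomial algebra (that the $H_{i,k}$ satisfy no relations beyond commutativity over $\mathbb{C}[[\hbar]]$ is already a PBW-type claim, and Proposition~\ref{prop:triangular} does not give it);
\item that $U^-_\hbar$ embeds injectively into a shuffle algebra;
\item that a Drinfeld-type pairing is nondegenerate on the algebra presented by \eqref{eq:QT5}--\eqref{eq:QT6}, which amounts to knowing that these relations generate its radical.
\end{itemize}

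The paper's route does not fill this hole either. It tries to show $\mathcal{I}_\hbar\cap\hbar\mathcal{A}=\hbar\mathcal{I}_\hbar$ by lifting classical relations. Its base step, that $\hbar a\in\mathcal{I}_\hbar$ forces $\bar a\in\bar{\mathcal{I}}$, is asserted without justification, and it is exactly what fails when torsion is present. The observation that every relation has the form $\mathcal{R}_0+\hbar\mathcal{S}$ holds for every element of $\mathcal{A}$, so it cannot exclude torsion. For example, in the complete free algebra on $x,y$, the relations $x^2$ and $x^2-\hbar y$ have this form, yet they give $\hbar y\in\mathcal{I}_\hbar$ with $\bar y\notin\bar{\mathcal{I}}$. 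What is needed is either one of your structural inputs actually proved, or a verification that all overlap ambiguities of \eqref{eq:QT1}--\eqref{eq:QT6} resolve over $\mathbb{C}[[\hbar]]$.

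Your hesitation about the imaginary root vectors points to a second gap, in the basis-modulo-$\hbar$ hypothesis, and it goes beyond counting $N$ versus $N+1$. In $\mathfrak{g}^{\mathrm{tor}}=\mathfrak{t}/\mathbb{C}\gamma$ only $c'=\gamma$ is killed, so by Proposition~\ref{prop:toroidal-iso} the central elements $s^nt^kc'$ with $n\neq 0$ survive. For instance, $s^nc'$ is, up to a nonzero scalar, the central part of $[e_\alpha\otimes s^nt,\,f_\alpha\otimes t^{-1}]$. Hence the piece of $\mathfrak{g}^{\mathrm{tor}}$ of weight $n\delta$ and loop degree $k$ is $(N+1)$-dimensional.
\begin{itemize}
\item With $i\in\mathring{I}$, the brackets $[e_i,e^{(k)}_{n\delta-\alpha_i}]$ have zero central component, since the cocycle vanishes when one factor is $e_{\alpha_i}\otimes 1$. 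So these central elements are never reached.
\item With $i\in I$, the $N+1$ vectors $X^\pm_{n\delta(i),0}$ lie in the quantum affine subalgebra. Its classical limit has an $N$-dimensional $n\delta$-root space, so these vectors are dependent modulo $\hbar$.
\end{itemize}
Either way, the reductions of $B_\hbar(U,\prec)$ do not form a basis of $U(\mathfrak{g}^{\mathrm{tor}})$. So Proposition~\ref{prop:PBW-toroidal-Lie} cannot be invoked as stated, and the generating set must be corrected before Lemma~\ref{lem:top-nakayama} can apply.
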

 
\begin{proof}
We verify the hypotheses of Lemma~\ref{lem:top-nakayama} for
$M = U_\hbar(\mathfrak{g}^{\mathrm{tor}})$ and $\{x_\xi\} = B_\hbar(U,\prec)$.
 
\smallskip
\noindent\emph{Completeness and separation.}
The algebra $U_\hbar(\mathfrak{g}^{\mathrm{tor}})$ is defined as a quotient of the
$\hbar$-adically complete free algebra by a closed ideal
(Definition~\ref{def:quantum-toroidal}). Any such quotient is again $\hbar$-adically
complete and separated \cite[Chapter~XVI]{Ka95}, i.e.,
$\bigcap_{n \geq 0} \hbar^n U_\hbar(\mathfrak{g}^{\mathrm{tor}}) = 0$.
 
\smallskip
\noindent\emph{Torsion-freeness.}
We show that $U_\hbar(\mathfrak{g}^{\mathrm{tor}})$ is $\hbar$-torsion-free.
Let $\mathcal{A}$ denote the $\hbar$-adically complete free algebra on generators
$\{X^\pm_{i,k}, H_{i,k}\}$ over $\mathbb{C}[[\hbar]]$, and let $\mathcal{I}_\hbar \subset \mathcal{A}$
be the closed ideal generated by the defining relations
\eqref{eq:QT1}--\eqref{eq:QT6}, so that
$U_\hbar(\mathfrak{g}^{\mathrm{tor}}) = \mathcal{A}/\mathcal{I}_\hbar$.
Since $\mathcal{A}$ is a free $\mathbb{C}[[\hbar]]$-module, the quotient $\mathcal{A}/\mathcal{I}_\hbar$ is
$\hbar$-torsion-free if and only if the ideal is \emph{saturated}:
\begin{equation}
\label{eq:saturated}
\mathcal{I}_\hbar \cap \hbar \mathcal{A} = \hbar \mathcal{I}_\hbar.
\end{equation}
Indeed, if $\hbar \bar{a} = 0$ in $\mathcal{A}/\mathcal{I}_\hbar$, then $\hbar a \in \mathcal{I}_\hbar$ for some
representative $a \in \mathcal{A}$. If \eqref{eq:saturated} holds, then
$\hbar a = \hbar r$ for some $r \in \mathcal{I}_\hbar$, thus $a - r \in \ker(\hbar \cdot)$
in the free module $A$, which gives $a - r = 0$, i.e., $a \in \mathcal{I}_\hbar$ and consequently $\bar{a} = 0$.
 
We now verify \eqref{eq:saturated}. Let $\bar{\mathcal{I}} := (\mathcal{I}_\hbar + \hbar \mathcal{A})/\hbar \mathcal{A}$ be the
image of $\mathcal{I}_\hbar$ in $\bar{\mathcal{A}} := \mathcal{A}/\hbar \mathcal{A}$, so that $\bar{\mathcal{A}}/\bar{\mathcal{I}} \cong
U(\mathfrak{g}^{\mathrm{tor}})$ by Theorem~\ref{thm:classical-limit}. The central
observation is that every defining relation of $U_\hbar(\mathfrak{g}^{\mathrm{tor}})$
has the form $\mathcal{R}_\hbar = \mathcal{R}_0 + \hbar \mathcal{S}$, where $\mathcal{R}_0$ is the corresponding classical
relation in $U(\mathfrak{g}^{\mathrm{tor}})$ and $\mathcal{S} \in \mathcal{A}$:
\begin{itemize}
\item Relations \eqref{eq:QT1}--\eqref{eq:QT2} are identical to their classical counterparts.
\item Relation \eqref{eq:QT3} satisfies $\frac{1}{k}[ka_{ij}]_i = a_{ij} + O(\hbar^2)$,
so it equals the classical relation modulo~$\hbar^2$.
\item Relation \eqref{eq:QT5} becomes $[X^\pm_{i,k+1}, X^\pm_{j,l}] -
[X^\pm_{i,k}, X^\pm_{j,l+1}] + \hbar(\cdots) = 0$ after expanding
$q_i^{\pm a_{ij}} = 1 \pm d_i a_{ij} \hbar + O(\hbar^2)$.
\item Relations \eqref{eq:QT4} and \eqref{eq:QT6} similarly satisfy $\mathcal{R}_\hbar \equiv \mathcal{R}_0 \pmod{\hbar}$.
\end{itemize}
In particular, the leading monomial (with respect to $\prec$) of each quantum relation
coincides with the leading monomial of the corresponding classical relation. This
ensures that the classical straightening procedure (which rewrites any element of
$\bar{\mathcal{A}}$ as a $\mathbb{C}$-linear combination of ordered monomials modulo $\bar{\mathcal{I}}$)
lifts to the quantum level: the same sequence of rewriting steps applies in $\mathcal{A}$ modulo
$\mathcal{I}_\hbar$, with all corrections lying in $\hbar \mathcal{A}$.
 
\begin{clm*}
If $\hbar a \in \mathcal{I}_\hbar$ for some $a \in \mathcal{A}$, then $a \in \mathcal{I}_\hbar$.
\end{clm*}
 
\begin{proof}[Proof of the claim]
We construct sequences $(a_n)_{n \geq 0}$ in $\mathcal{A}$ and $(r_n)_{n \geq 0}$ in $\mathcal{I}_\hbar$
such that $a_0 = a$ and
\begin{equation}
\label{eq:iteration}
a_n = r_n + \hbar\, a_{n+1}, \qquad \text{for all } n \geq 0.
\end{equation}
For the base case: since $\hbar a \in \mathcal{I}_\hbar$, the image $\bar{a} \in \bar{\mathcal{A}}$ lies in
$\bar{\mathcal{I}}$. By the classical PBW theorem (Proposition~\ref{prop:PBW-toroidal-Lie}),
the ordered monomials $B(U,\prec)$ form a basis of $\bar{\mathcal{A}}/\bar{\mathcal{I}}$, and hence
$\bar{a}$ can be expressed as a (finite) linear combination of the images of generating
relations of $\bar{\mathcal{I}}$. Since each classical relation $\mathcal{R}_0$ lifts to a quantum relation
$\mathcal{R}_\hbar = \mathcal{R}_0 + \hbar \mathcal{S} \in \mathcal{I}_\hbar$, the same expression lifts: there exists
$r_0 \in \mathcal{I}_\hbar$ with $\bar{r}_0 = \bar{a}$, i.e., $a - r_0 \in \hbar \mathcal{A}$.
Write $a = r_0 + \hbar a_1$ for some $a_1 \in \mathcal{A}$.
Then $\hbar a = \hbar r_0 + \hbar^2 a_1 \in \mathcal{I}_\hbar$ gives $\hbar^2 a_1 \in \mathcal{I}_\hbar$.
Applying the same argument to $\hbar a_1$
(whose image $\overline{\hbar a_1} = \overline{a - r_0}$ lies in $\bar{\mathcal{I}}$ since
$\hbar(a - r_0) = \hbar^2 a_1 \in \mathcal{I}_\hbar$), we obtain $a_1 = r_1 + \hbar a_2$ with
$r_1 \in \mathcal{I}_\hbar$. Iterating yields the full sequence \eqref{eq:iteration}.
 
Now set $s_t := \sum_{n=0}^{t} \hbar^n r_n$. From \eqref{eq:iteration},
$a - s_t = \hbar^{t+1} a_{t+1} \in \hbar^{t+1} \mathcal{A}$, thus $(s_t)_{t \geq 0}$ is a Cauchy
sequence in the $\hbar$-adic topology converging to $a$. Since $\mathcal{I}_\hbar$ is closed and
each $s_t \in \mathcal{I}_\hbar$, the limit $a = \lim_{t \to \infty} s_t$ belongs to $\mathcal{I}_\hbar$.
\end{proof}
 
The claim immediately gives \eqref{eq:saturated}: if $\hbar a \in \mathcal{I}_\hbar$, then
$a \in \mathcal{I}_\hbar$ and hence $\hbar a \in \hbar \mathcal{I}_\hbar$.
Therefore $U_\hbar(\mathfrak{g}^{\mathrm{tor}}) = \mathcal{A}/\mathcal{I}_\hbar$ is $\hbar$-torsion-free.
 
\smallskip
\noindent\emph{The images form a basis of $M/\hbar M$.}
By Theorem~\ref{thm:classical-limit},
$U_\hbar(\mathfrak{g}^{\mathrm{tor}})/\hbar U_\hbar(\mathfrak{g}^{\mathrm{tor}}) \cong U(\mathfrak{g}^{\mathrm{tor}})$,
and under this isomorphism the image of $X^\pm_{\beta,k}$ (resp.\ $H_{i,k}$) equals
$e^{(k)}_\beta$ (resp.\ $h_i^{(k)}$) up to a nonzero scalar.
Hence the images of $B_\hbar(U,\prec)$ in $U(\mathfrak{g}^{\mathrm{tor}})$ coincide,
up to rescaling, with $B(U,\prec)$, which is a $\mathbb{C}$-basis by
Proposition~\ref{prop:PBW-toroidal-Lie}.
 
\smallskip
All hypotheses of Lemma~\ref{lem:top-nakayama} are satisfied, so
$U_\hbar(\mathfrak{g}^{\mathrm{tor}}) \cong U(\mathfrak{g}^{\mathrm{tor}})[[\hbar]]$ as
$\mathbb{C}[[\hbar]]$-modules, with $B_\hbar(U,\prec)$ as a topological
$\mathbb{C}[[\hbar]]$-basis.
\end{proof}
 
\begin{coro}
\label{cor:flat}
The algebra $U_\hbar(\mathfrak{g}^{\mathrm{tor}})$ is isomorphic to
$U(\mathfrak{g}^{\mathrm{tor}})[[\hbar]]$ as $\mathbb{C}[[\hbar]]$-modules.
In particular, $U_\hbar(\mathfrak{g}^{\mathrm{tor}})$ is a flat deformation of
$U(\mathfrak{g}^{\mathrm{tor}})$.
\end{coro}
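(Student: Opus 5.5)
The corollary is essentially a repackaging of Theorem~\ref{thm:PBW-toroidal} through Lemma~\ref{lem:top-nakayama}, so the plan is to read off both assertions from what has already been proved.

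For the module isomorphism, first recall that the proof of Theorem~\ref{thm:PBW-toroidal} verified all hypotheses of Lemma~\ref{lem:top-nakayama} for $M = U_\hbar(\mathfrak{g}^{\mathrm{tor}})$: it is $\hbar$-adically complete and separated, it is $\hbar$-torsion-free by the saturation Claim, and the images of $B_\hbar(U,\prec)$ form a $\mathbb{C}$-basis of $M/\hbar M$. Step~3 of the lemma then gives an isomorphism $(M/\hbar M)[[\hbar]] \xrightarrow{\sim} M$ of $\mathbb{C}[[\hbar]]$-modules. Composing it with $\Upsilon[[\hbar]]$ from Theorem~\ref{thm:classical-limit}, that is, the coefficientwise extension of the $\mathbb{C}$-linear isomorphism $U(\mathfrak{g}^{\mathrm{tor}})\cong M/\hbar M$, yields $U(\mathfrak{g}^{\mathrm{tor}})[[\hbar]] \cong U_\hbar(\mathfrak{g}^{\mathrm{tor}})$. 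Concretely, this isomorphism sends a formal power series in the classical PBW monomials of Proposition~\ref{prop:PBW-toroidal-Lie} to the corresponding convergent series in $B_\hbar(U,\prec)$, after rescaling each generator by the nonzero constants $d_i^{\pm1/2}$, $d_i^{\pm1}$.

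For flatness, I would use the standard definition: a flat (formal) deformation of an algebra $A_0$ is a topologically free $\mathbb{C}[[\hbar]]$-algebra $A$ together with an algebra isomorphism $A/\hbar A\cong A_0$. Topological freeness is exactly the first assertion. The algebra identification of the special fibre is Theorem~\ref{thm:classical-limit}. Alternatively, flatness over the DVR $\mathbb{C}[[\hbar]]$ is equivalent to torsion-freeness, which was established directly in the proof of Theorem~\ref{thm:PBW-toroidal}. Transporting the multiplication of $U_\hbar$ along the module isomorphism gives a $\mathbb{C}[[\hbar]]$-bilinear star product on $U(\mathfrak{g}^{\mathrm{tor}})[[\hbar]]$ that reduces to the usual product modulo $\hbar$. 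That reduction is the concrete formulation of ``flat deformation.''

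No step here carries real difficulty; the whole substance lies in the torsion-freeness Claim inside Theorem~\ref{thm:PBW-toroidal}. The only point I would check carefully is that the lemma's map $\tau$ is independent of the chosen lifts and is $\mathbb{C}[[\hbar]]$-linear. I would also make sure the isomorphism is stated only as one of modules, since it does not respect multiplication; compatibility with the algebra structures holds only after reducing modulo $\hbar$.
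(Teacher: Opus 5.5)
Your deduction is correct and follows the paper's route exactly: the corollary is the closing sentence of the proof of Theorem~\ref{thm:PBW-toroidal}, which feeds completeness and separatedness, the torsion-freeness Claim, and Theorem~\ref{thm:classical-limit} into Lemma~\ref{lem:top-nakayama}. Both of your cautions are well placed. First, $\tau$ is well defined only if you use the canonical lifts $\sum_\xi c_\xi x_\xi$ determined by the fixed family $\{x_\xi\}$, since arbitrary lifts change its value by elements of $\hbar M$. Second, everything rests on the torsion-freeness Claim, and its opening step, which passes from $\hbar a\in\mathcal{I}_\hbar$ to $\bar a\in\bar{\mathcal{I}}$, is precisely the statement being proved, so the corollary is only as solid as an independent proof of that Claim.
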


\section{Affine Yangians}
\label{sec:yangian}

In this section we recall the affine Yangians of Guay--Nakajima--Wendlandt \cite{GNW18},
the braid group action of Kodera \cite{Ko19}, and the operator $J$ of \cite{GNW18}.
We use these to construct an ordered spanning set $B_\hbar(Y, \prec)$ for $Y_\hbar(\mathfrak{g})$,
which will be proved to be a basis in Section~\ref{sec:main}.

\subsection{Definition}
\label{sec:yangian-def}

\begin{defi}[\cite{GNW18}]
\label{def:yangian}
The \emph{affine Yangian} $Y_\hbar(\mathfrak{g})$ is the associative $\mathbb{C}[\hbar]$-algebra
generated by $\{x^\pm_{i,m},\, h_{i,m} \mid i \in I,\, m \in \mathbb{Z}_+\}$ with defining relations
\begin{align}
  &[h_{i,m},\, h_{j,n}] = 0,
  \label{eq:Y1}\\
  &[x^+_{i,m},\, x^-_{j,n}] = \delta_{ij}\, h_{i,m+n},
  \label{eq:Y2}\\
  &[h_{i,0},\, x^\pm_{j,n}] = \pm(\alpha_i, \alpha_j)\, x^\pm_{j,n},
  \label{eq:Y3}\\
  &[h_{i,m+1},\, x^\pm_{j,n}] - [h_{i,m},\, x^\pm_{j,n+1}]
  = \pm\frac{\hbar(\alpha_i,\alpha_j)}{2}\{h_{i,m},\, x^\pm_{j,n}\},
  \label{eq:Y4}\\
  &[x^\pm_{i,m+1},\, x^\pm_{j,n}] - [x^\pm_{i,m},\, x^\pm_{j,n+1}]
  = \pm\frac{\hbar(\alpha_i,\alpha_j)}{2}\{x^\pm_{i,m},\, x^\pm_{j,n}\},
  \label{eq:Y5}\\
  &\sum_{\sigma \in S_r} \sum_{a=0}^{r} (-1)^{a} \binom{r}{a}
  x^\pm_{i,m_{\sigma(1)}} \cdots x^\pm_{i,m_{\sigma(a)}}
  x^\pm_{j,n}\,
  x^\pm_{i,m_{\sigma(a+1)}} \cdots x^\pm_{i,m_{\sigma(r)}} &= 0, \quad i \neq j,
  \label{eq:Y6}
\end{align}
where $r = 1 - a_{ij}$ and $\{x, y\} = xy + yx$ denotes the anticommutator.
\end{defi}

\begin{remark}
\label{rem:yangian-isom}
All $Y_\hbar(\mathfrak{g})$ for $\hbar \in \mathbb{C} \setminus \{0\}$ are pairwise
isomorphic via $x^\pm_{i,m} \mapsto (\hbar'/\hbar)^m x^\pm_{i,m}$,
$h_{i,m} \mapsto (\hbar'/\hbar)^m h_{i,m}$.
\end{remark}

By \cite[Section 2]{GNW18}, the assignments $x^\pm_i \mapsto x^\pm_{i,0}$ and
$h_i \mapsto h_{i,0}$ define an injective algebra homomorphism
$\iota \colon U(\mathfrak{g}) \hookrightarrow Y_\hbar(\mathfrak{g})$.
We identify $\mathfrak{g}$ with its image $\iota(\mathfrak{g})$, writing
$x$ for $\iota(x)$ when no confusion arises.

Introduce a filtration on $Y_\hbar(\mathfrak{g})$ by $\deg x^\pm_{i,m} = \deg h_{i,m} = m$
and $\deg(xy) = \deg x + \deg y$. Setting
\[
  Y^p_\hbar := \mathrm{span}_{\mathbb{C}[\hbar]}\{x \in Y_\hbar(\mathfrak{g}) \mid \deg x \leq p\},
  \quad
  \mathrm{gr}^p Y_\hbar(\mathfrak{g}) = Y^p_\hbar/Y^{p-1}_\hbar, \quad
  \mathrm{gr}\, Y_\hbar(\mathfrak{g}) = \bigoplus_{p \geq 0} \mathrm{gr}^p Y_\hbar(\mathfrak{g}),
\]  where we set $Y^{-1}_\hbar := 0$ by convention.
The relations \eqref{eq:Y1}--\eqref{eq:Y6} imply that $\mathrm{gr}\, Y_\hbar(\mathfrak{g})$ is a quotient of $U(\mathfrak{g}[u])$; more precisely, there is a surjective algebra
homomorphism
\[
  \varpi \colon U(\mathfrak{g}[u]) \twoheadrightarrow \mathrm{gr}\, Y_\hbar(\mathfrak{g}),
  \qquad z\, u^m \mapsto \overline{x^\pm_{\alpha,m}}\text{ or }\overline{h_{i,m}},
\]
which is upgraded to an isomorphism in Corollary~\ref{cor:classical-limit}.

\subsection{Braid group action and the operator \texorpdfstring{$J$}{J}}
\label{sec:yangian-braid}

By \cite[Section 3.3]{GNW18}, the exponential adjoint operators
$\tau_i := \exp(\mathrm{ad}\, x^+_{i,0})\exp(-\mathrm{ad}\, x^-_{i,0})\exp(\mathrm{ad}\, x^+_{i,0})$
(for $i \in I$) are well-defined algebra automorphisms of $Y_\hbar(\mathfrak{g})$
satisfying $\tau_i \circ \iota(x) = \iota \circ r_i(x)$ for all $x \in \mathfrak{g}$.
Together with the diagram automorphisms $\eta \in \Gamma$ acting by
$\eta(x^\pm_{i,m}) = x^\pm_{\eta(i),m}$ and $\eta(h_{i,m}) = h_{\eta(i),m}$,
these generate an action of the extended braid group $B^e = B \rtimes \Gamma$ on $Y_\hbar(\mathfrak{g})$ \cite{Ko19}.

Set $\widetilde{h}_{i,1} := h_{i,1} - \frac{1}{2}\hbar h^2_{i,0}$. The explicit formulas
for $\tau_i$ on the generators $\{x^\pm_{j,m}, h_{j,m}\}_{m \in \{0,1\}}$ are \cite[Appendix A]{Ko19}:
\begin{align*}
  \tau_i(x^+_{j,0}) &=
  \begin{cases}
    -x^-_{i,0} & i = j, \\[2pt]
    \dfrac{1}{(-a_{ij})!}\,(\mathrm{ad}\, e_i)^{-a_{ij}}(x^+_{j,0}) & i \neq j,
  \end{cases}
  &
  \tau_i(x^-_{j,0}) &=
  \begin{cases}
    -x^+_{i,0} & i = j, \\[2pt]
    \dfrac{(-1)^{-a_{ij}}}{(-a_{ij})!}\,(\mathrm{ad}\, f_i)^{-a_{ij}}(x^-_{j,0}) & i \neq j,
  \end{cases}\\[6pt]
  \tau_i(h_{j,0}) &= h_{j,0} - a_{ij}\,h_{i,0},
\end{align*}
with the level-$1$ formulas $\tau_i(x^\pm_{j,1})$ and $\tau_i(\widetilde{h}_{j,1})$
given in \cite[Proposition A.2]{Ko19}. When $\mathfrak{g}$ is not of type $A_1^{(1)}$,
these automorphisms, together with the minimalistic presentation of $Y_\hbar(\mathfrak{g})$
in terms of generators $\{x^\pm_{i,m}, h_{i,m}\}_{i \in I, m \in \{0,1\}}$
\cite[Theorem 2.13]{GNW18}, allow one to derive the action of $\tau_i$ on all generators.

By \cite[Section 3]{GNW18}, there exists a linear operator $J$ on $\iota(\mathfrak{g})$
satisfying $J([x,y]) = [x, J(y)]$ for all $x, y \in \iota(\mathfrak{g})$, with
\begin{align*}
  J(h_{i,0}) &= h_{i,1} + \nu_i, \qquad
  \nu_i := \frac{1}{2}\sum_{\alpha \in R^+} (\alpha,\alpha_i)
  \sum_{k=1}^{\dim \mathfrak{g}_\alpha} e^{[k]}_{-\alpha} e^{[k]}_\alpha
  - \frac{1}{2} h_{i,0}^2, \\
  J(x^\pm_{i,0}) &= x^\pm_{i,1} + \mu^\pm_i,
\end{align*}
 where $\{e^{[k]}_{\pm\alpha}\}$ are dual bases of $\mathfrak{g}_{\pm\alpha}$, and
$\mu^\pm_i \in U(\mathfrak{g}) \subset Y_\hbar(\mathfrak{g})$ are degree-$0$ elements
given by explicit quadratic expressions in the Chevalley generators of $\mathfrak{g}$;
see \cite[Section~3]{GNW18} for the precise formulas.
The operator $J$ extends to all of $Y_\hbar(\mathfrak{g})$ via
$J^m(x^\pm_{i,0}) = \pm\tfrac{1}{2}[J(h_{i,0}), J^{m-1}(x^\pm_{i,0})]$, and satisfies
\begin{equation}
  \label{eq:J-mod}
  J^m([x^\pm_{i,0},\, x^\pm_{j,0}]) \equiv [J^a(x^\pm_{i,0}),\, J^{m-a}(x^\pm_{j,0})]
  \pmod{Y^{m-1}_\hbar}
\end{equation}
for all $0 \leq a \leq m$. In particular, $\deg J^m(x^\pm_{i,0}) \leq m$.

\subsection{Yangian root vectors and a spanning set}
\label{sec:yangian-spanning}

For each real positive root $\alpha = \eta r_{i_1} \cdots r_{i_l}(\alpha_j) \in R^+_{\mathrm{re}}$
and $m \in \mathbb{Z}_+$, define \emph{Yangian root vectors}
\begin{equation}
  \label{eq:yangian-root-vector}
  x^\pm_{\alpha,m} := \eta\, \tau_{i_1} \cdots \tau_{i_l}(x^\pm_{j,m}).
\end{equation}
For imaginary roots, define $x^\pm_{k\delta(i),m} := [x^\pm_{i,0},\, x^\pm_{k\delta-\alpha_i,\,m}]$
inductively for $k > 0$ and $i \in \mathring{I}$.

By \eqref{eq:J-mod} and the level-$1$ braid group formulas, one shows that for each
$\beta \in R^+$ and $m \in \mathbb{Z}_+$,
\begin{equation}
  \label{eq:leading-term}
  x^\pm_{\beta,m} \equiv c\, y^\pm_{\beta,(m)} \pmod{Y^{m-1}_\hbar},
  \qquad c \in \mathbb{C}[\hbar]^\times,
\end{equation}
where $y^\pm_{\beta,(m)}$ is the nested commutator built from simple root vectors
corresponding to the root decomposition of $\beta$, summed over decompositions
$m = m_1 + \cdots + m_l$ \cite[Section 4.3]{GNW18}. This leading-term property
is the main ingredient of the following spanning result.

Introduce a total order $\prec$ on $\{x^\pm_{\alpha,m},\, h_{i,n}\}$ by
\[
  x^-_{\beta,n} \prec h_{i,m} \prec x^+_{\beta',n'}
\]
for all $\beta, \beta' \in R^+$, $i \in I$, $m,n,n' \in \mathbb{Z}_+$, with
$h_{i,m} \prec h_{i',m'}$ if $(i,m) \prec (i',m')$ lexicographically,
and similarly for $x^\pm$. Denote by $B_\hbar(Y, \prec)$ the set of all
ordered monomials in these generators.

\begin{prop}
\label{prop:spanning}
The set $B_\hbar(Y, \prec)$ spans $Y_\hbar(\mathfrak{g})$ over $\mathbb{C}[\hbar]$.
\end{prop}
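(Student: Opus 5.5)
The plan is to argue by induction on the filtration degree $p$: for every $p\ge 0$, the $\mathbb{C}[\hbar]$-span of ordered monomials of total degree $\le p$ contains $Y^p_\hbar$. Since $Y_\hbar(\mathfrak{g})=\bigcup_p Y^p_\hbar$, this gives the proposition. Because $\mathrm{gr}\,Y_\hbar(\mathfrak{g})$ is graded by degree, it suffices to prove two things. First, every element of $Y^p_\hbar$ is congruent modulo $Y^{p-1}_\hbar$ to a $\mathbb{C}[\hbar]$-combination of ordered monomials of degree exactly $p$. Second, the error term lies in $Y^{p-1}_\hbar$, where the induction hypothesis applies.

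To control the top-degree part, I will use the surjection $\varpi\colon U(\mathfrak{g}[u])\twoheadrightarrow \mathrm{gr}\,Y_\hbar(\mathfrak{g})$. First I need that $\mathrm{gr}\,Y_\hbar(\mathfrak{g})$ is a $\mathbb{C}[\hbar]$-module quotient of $U(\mathfrak{g}[u])[\hbar]$; this holds because $\hbar$ has degree $0$ and relations \eqref{eq:Y4}--\eqref{eq:Y5} reduce to the current-algebra relations modulo lower degree. By the classical PBW theorem, $U(\mathfrak{g}[u])$ has as a basis the ordered monomials in a root basis $\{z_\beta u^m\}$ of $\mathfrak{g}[u]$, where $z$ runs over $e_\beta,f_\beta,h_i$, the imaginary root spaces are spanned by the $e_{k\delta(i)}$, and the ordering is the one transported from $\prec$. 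Hence $\mathrm{gr}^p Y_\hbar(\mathfrak{g})$ is spanned over $\mathbb{C}[\hbar]$ by products $\varpi(z_1u^{m_1})\cdots\varpi(z_su^{m_s})$ in increasing order with $\sum m_j=p$. The key step is to identify each $\varpi(z_\beta u^m)$, up to a unit of $\mathbb{C}[\hbar]$, with the class $\overline{x^\pm_{\beta,m}}$ (respectively $\overline{h_{i,m}}$) in $\mathrm{gr}^m$. Given this, each product lifts to an ordered monomial in $B_\hbar(Y,\prec)$ of degree $p$, and the congruence follows.

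For that identification I would use \eqref{eq:leading-term}: $x^\pm_{\beta,m}\equiv c\,y^\pm_{\beta,(m)}$ modulo $Y^{m-1}_\hbar$. Here $y^\pm_{\beta,(m)}$ is a nested commutator of simple generators $x^\pm_{i_j,m_j}$ with $\sum m_j=m$. Under $\varpi$, the corresponding nested commutator $[z_{i_1}u^{m_1},[\dots]]$ equals $[z_{i_1},[\dots]]\,u^{m}=c'\,z_\beta u^m$ in $\mathfrak{g}[u]$, with $c'\neq 0$ because $z_\beta$ is obtained from the Chevalley generators by the same braid group/Weyl group construction. For real roots this uses the compatibility $\tau_i\circ\iota=\iota\circ r_i$ at degree $0$, together with \eqref{eq:J-mod} to see that the braid group action respects $\mathrm{gr}$. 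For imaginary roots, $x^\pm_{k\delta(i),m}=[x^\pm_{i,0},x^\pm_{k\delta-\alpha_i,m}]$ maps to $[e_i,e_{k\delta-\alpha_i}]u^m=e_{k\delta(i)}u^m$. Since these elements for $i\in\mathring I$ span the $N$-dimensional space $\mathfrak{g}_{k\delta}$, we obtain a basis of each imaginary root space of $\mathfrak{g}[u]$ in degree $m$.

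The main obstacle I expect is this leading-term step, namely verifying that the braid automorphisms $\tau_i$ preserve the filtration and induce the Weyl group action on $\mathrm{gr}\,Y_\hbar\cong$ (a quotient of) $U(\mathfrak{g}[u])$. Concretely, I must check that $\tau_i(x^\pm_{j,m})$ equals $r_i(z)u^m$ modulo $Y^{m-1}_\hbar$. I would first check this on degrees $0$ and $1$ using Kodera's formulas, where $\tau_i(x^\pm_{j,1})$ differs from the classical expression only by the degree-$0$ corrections $\mu^\pm$ and $\nu$. I would then propagate to all $m$ via $J^m$, using that $J^m(x^\pm_{i,0})\equiv x^\pm_{i,m}$ up to a scalar modulo $Y^{m-1}_\hbar$, and that $\tau_i$ commutes with $J$ because it is an exponential of $\mathrm{ad}$ of degree-$0$ elements. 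Type $A_1^{(1)}$, where the minimalistic presentation is unavailable, I would treat separately or cite \cite{YZ20}.
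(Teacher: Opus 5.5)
\textbf{There is a genuine gap.} Your outline is the same as the paper's proof: leading symbols of root vectors plus the PBW theorem for $U(\mathfrak{g}[u])$. The step you settle in one line, that $\mathrm{gr}\,Y_\hbar(\mathfrak{g})$ is a quotient of $U(\mathfrak{g}[u])[\hbar]$, does not hold, and the paper's proof rests on the same unproved assertion.

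The degree count on \eqref{eq:Y4}--\eqref{eq:Y5} only shows that $\mathrm{gr}\,Y_\hbar(\mathfrak{g})$ is a quotient of $U(\mathfrak{s})[\hbar]$. Here $\mathfrak{s}$ is the Lie algebra \emph{presented} by the current relations on $x^\pm_{i,m},h_{i,m}$, $i\in I$; in fact $Y_\hbar(\mathfrak{g})/\hbar Y_\hbar(\mathfrak{g})=U(\mathfrak{s})$ exactly. For affine $\mathfrak{g}$, $\mathfrak{s}$ is not $\mathfrak{g}[u]$, for the same reason that \eqref{eq:tor1}--\eqref{eq:tor6} present a universal central extension.

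To see this, put $A=\mathbb{C}[s^{\pm1},u]$ and $\widehat{L}=\mathring{\mathfrak{g}}\otimes A\oplus\Omega^1_A/dA$, with bracket $[x\otimes a,y\otimes b]=[x,y]\otimes ab+(x,y)\,\overline{a\,db}$. Assume $\mathfrak{g}$ is not of type $A_1^{(1)}$. Send $x^\pm_{i,m}$ to suitable multiples of $e_i\otimes u^m$, $f_i\otimes u^m$ for $i\in\mathring{I}$, and of $f_\theta\otimes su^m$, $e_\theta\otimes s^{-1}u^m$ for $i=0$. This defines a surjection $\mathfrak{s}\twoheadrightarrow\widehat{L}$, because:
\begin{enumerate}[label=(\alph*)]
\item the cocycle vanishes on every pair of generators occurring in \eqref{eq:Y4}--\eqref{eq:Y6} at $\hbar=0$ (this uses $\theta\neq\alpha_i$);
\item on $[x^+_{i,m},x^-_{i,n}]$ the cocycle depends only on $m+n$.
\end{enumerate}

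Now fix $i\in\mathring{I}$ and set
\[
W=[x^+_{i,1},x^+_{\delta-\alpha_i,0}]-[x^+_{i,0},x^+_{\delta-\alpha_i,1}].
\]
At $\hbar=0$ the $\tau_j$ are exponentials of inner derivations, so they descend to $\widehat{L}$. Hence $x^+_{\delta-\alpha_i,m}\mapsto\lambda_m f_i\otimes su^m$ with $\lambda_m\neq0$. It follows that $W$ maps to a nonzero multiple of $\overline{u\,ds}$ plus a multiple of $h_i\otimes su$. Moreover $\overline{u\,ds}\neq0$, since the only exact form of bidegree $(1,1)$ is $d(su)=u\,ds+s\,du$. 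So the $(\delta,1)$-part of $\widehat{L}$ has dimension $N+1$, not $N$ as in $\mathfrak{g}[u]$.

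\emph{Consequence for $\varpi$.} Reducing mod $\hbar$ and keeping the top $u$-degree gives an algebra map $\mathrm{gr}\,Y_\hbar(\mathfrak{g})\to U(\widehat{L})$. Since $[e_iu,e_{\delta-\alpha_i}]=[e_i,e_{\delta-\alpha_i}u]$ holds in $\mathfrak{g}[u]$ but its image under this map has a nonzero $\overline{u\,ds}$-component, no homomorphism $\varpi$ with your prescribed values can exist.

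\emph{Consequence for the statement itself.} The statement fails outright. Your imaginary vectors $x^+_{\delta(i),1}=[x^+_{i,0},x^+_{\delta-\alpha_i,1}]$ map to $\lambda_1 h_i\otimes su$ with no central part, because $\overline{1\cdot db}$ is exact. So $B_\hbar(Y,\prec)$ maps to ordered PBW monomials in a basis of $\widehat{L}$ that contains $\overline{u\,ds}$ but never uses it. By PBW for $U(\widehat{L})$, the image of $W$ is not in their span. Spanning over $\mathbb{C}[\hbar]$ would force spanning of $Y_\hbar(\mathfrak{g})/\hbar Y_\hbar(\mathfrak{g})=U(\mathfrak{s})\twoheadrightarrow U(\widehat{L})$, so $B_\hbar(Y,\prec)$ does not span.

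Your count of $N$ imaginary vectors per $(k\delta,m)$ is exactly where this shows up. A correct spanning statement needs two further ingredients:
\begin{enumerate}[label=(\roman*)]
\item extra central generators, such as $W$ in each degree $(k\delta,m)$ with $k\neq0$ and $m\geq1$;
\item a PBW theorem for $U(\mathfrak{s})$, that is, an identification of $\mathfrak{s}$.
\end{enumerate}
Neither your proposal nor the paper supplies these.
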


\begin{proof}
It suffices to show that the images of all degree-$p$ monomials in the root vectors
span $\mathrm{gr}^p Y_\hbar(\mathfrak{g})$. By \eqref{eq:leading-term}, rearranging
any monomial into $\prec$-order produces error terms of strictly smaller degree.
Hence the leading symbols of $B_\hbar(Y, \prec)$ span each graded piece. Since
$\mathrm{gr}\, Y_\hbar(\mathfrak{g})$ is a quotient of $U(\mathfrak{g}[u])$
and the leading symbols correspond exactly to the PBW monomials of $U(\mathfrak{g}[u])$
via the surjection of Section~\ref{sec:yangian-def}, the spanning follows from the
PBW theorem for $U(\mathfrak{g}[u])$.
\end{proof}

\section{From Quantum Toroidal Algebras to Affine Yangians}
\label{sec:main}

We retain the notation and conventions of Section~\ref{sec:quantum-toroidal}.

\subsection{The filtration \texorpdfstring{$K$}{K} and auxiliary elements}
\label{sec:filtration}

Using Theorem~\ref{thm:classical-limit}, we have the composite surjection
\begin{equation}
  \label{eq:composite}
  U_\hbar(\mathfrak{g}^{\mathrm{tor}})
  \twoheadrightarrow U_\hbar(\mathfrak{g}^{\mathrm{tor}})/\hbar\,U_\hbar(\mathfrak{g}^{\mathrm{tor}})
  \xrightarrow{\;\Upsilon^{-1}\;} U(\mathfrak{g}^{\mathrm{tor}})
  \xrightarrow{\;\sim\;} U(\mathfrak{g}[t^{\pm 1}])
  \xrightarrow{t \mapsto 1} U(\mathfrak{g}).
\end{equation}
Let $K$ be the kernel of this composite. The map $\psi$ defined as the composition of the first two arrows of
\eqref{eq:composite} satisfies $\psi(K) = \kappa$ (the kernel of Section~\ref{sec:non-quantum}).
The filtration by powers of $K$,
\[
  U_\hbar(\mathfrak{g}^{\mathrm{tor}}) = K^0 \supset K^1 = K \supset K^2 \supset \cdots,
\]
defines the \emph{associated graded algebra}
$\mathrm{gr}_K U_\hbar(\mathfrak{g}^{\mathrm{tor}}) = \bigoplus_{m=0}^\infty K^m/K^{m+1}$,
which we regard as a $\mathbb{C}[\hbar]$-algebra by identifying $\hbar$ with its image
$\bar{\hbar} \in K/K^2$ (note that $\hbar \in K$ by construction).

For $\beta \in R^+$, $i \in I$, $k \in \mathbb{Z}$, and $m \in \mathbb{Z}_+$, define
elements of $U_\hbar(\mathfrak{g}^{\mathrm{tor}})$ by the alternating sums
\begin{equation}
  \label{eq:alternating}
  H^{(m)}_{i,k} := \sum_{a=0}^{m} (-1)^{m-a} \binom{m}{a}
  \frac{\Phi^+_{i,k+a} - \Phi^-_{i,k+a}}{q - q^{-1}}, \qquad
  X^{(\pm,m)}_{\beta,k} := \sum_{b=0}^{m} (-1)^{m-b} \binom{m}{b} X^\pm_{\beta,k+b}.
\end{equation}
We write $X^{(\pm,m)}_{i,k} := X^{(\pm,m)}_{\alpha_i,k}$ for brevity.

\begin{lemm}
\label{lem:K-membership}
\begin{enumerate}[label=\normalfont(\arabic*)]
 \item For all $\beta \in R^+$, $i \in I$, $k \in \mathbb{Z}$, $m \in \mathbb{Z}_+$, we have
    $H^{(m)}_{i,k},\, X^{(\pm,m)}_{\beta,k} \in K^m$.
  \item The following equalities hold in $K^m/K^{m+1}$:
    \[
      H^{(m)}_{i,k} = H^{(m)}_{i,0}, \qquad
      X^{(\pm,m)}_{\beta,k} = X^{(\pm,m)}_{\beta,0}.
    \]
\end{enumerate}
\end{lemm}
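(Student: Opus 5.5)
The plan is to realize the alternating sums as iterated commutators with a single element of $K$; membership in $K^m$ then follows from the fact that $K$ is a two-sided ideal. Because $K$ is a two-sided ideal, $K^a$ is one as well. So for $E \in K$ and $y \in K^a$ we have $[E,y] = Ey - yE \in K^{a+1}$, and by induction $(\mathrm{ad}\,E)^m(y) \in K^m$ for every $y \in U_\hbar(\mathfrak{g}^{\mathrm{tor}})$.

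\emph{Step 1: simple roots.} Fix $j \in I$ and use $a_{jj}=2$. By \eqref{eq:QT2} and \eqref{eq:QT3},
\[
[H_{j,0}, X^\pm_{j,k}] = \pm 2d_j\, X^\pm_{j,k}, \qquad [H_{j,1}, X^\pm_{j,k}] = \pm[2]_j\, X^\pm_{j,k+1}.
\]
Set
\[
E_j := \frac{H_{j,1}}{[2]_j} - \frac{H_{j,0}}{2d_j}.
\]
This makes sense over $\mathbb{C}[[\hbar]]$, since $[2]_j \in \mathbb{C}[[\hbar]]^\times$. Then $\pm\,\mathrm{ad}\,E_j$ acts on $X^\pm_{j,k}$ as the shift $X^\pm_{j,k} \mapsto X^\pm_{j,k+1} - X^\pm_{j,k}$.

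Under $\psi$ followed by $\jmath$, the element $E_j$ maps to a multiple of $h_j(t-1)$, whose image under $t \mapsto 1$ vanishes. Hence $E_j \in K$. The binomial expansion of $(\mathrm{ad}\,E_j)^m$ gives
\[
X^{(\pm,m)}_{j,k} = (\pm\mathrm{ad}\,E_j)^m (X^\pm_{j,k}) \in K^m.
\]

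\emph{Step 2: real roots.} Write $\alpha = \eta r_{i_1}\cdots r_{i_l}(\alpha_j)$ and $T_w := T_\eta T_{i_1}\cdots T_{i_l}$. By Definition~\ref{def:quantum-root-vectors} and linearity,
\[
X^{(\pm,m)}_{\alpha,k} = T_w\bigl(X^{(\pm,m)}_{j,k}\bigr) = (\pm\mathrm{ad}\,T_w(E_j))^m (X^\pm_{\alpha,k}).
\]
It therefore suffices to know $T_w(K) \subseteq K$. For this I would check on generators that each $T_i$ and $T_\eta$ reduces modulo $\hbar$, via $\Upsilon$, to the classical automorphism $r_i$ (resp.\ $\eta$) of $U(\mathfrak{g}^{\mathrm{tor}}) \cong U(\mathfrak{g}[t^{\pm1}])$. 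That classical automorphism acts through $\mathfrak{g}$ and commutes with $t\mapsto 1$, so it preserves $\kappa$; since $K = \psi^{-1}(\kappa)$, the quantum automorphism preserves $K$.

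\emph{Step 3: imaginary roots.} The defining formula of $X^\pm_{n\delta(i),k}$ is linear in $X^\pm_{n\delta-\alpha_i,k}$, and the factor $X^\pm_{i,0}$ does not depend on $k$. Hence
\[
X^{(\pm,m)}_{n\delta(i),k} = -X^{(\pm,m)}_{n\delta-\alpha_i,k}X^\pm_{i,0} + q_i^{-2}X^\pm_{i,0}X^{(\pm,m)}_{n\delta-\alpha_i,k},
\]
which lies in $K^m$ by Step 2 and the ideal property. (When $n\delta-\alpha_i$ is itself imaginary the same argument applies inductively.)

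\emph{Step 4: the $H$'s.} By \eqref{eq:QT4}, $[X^+_{i,k}, X^-_{i,a}] = \bigl(\Phi^+_{i,k+a}-\Phi^-_{i,k+a}\bigr)/(q-q^{-1})$. Summing the alternating combination over $a$ gives
\[
H^{(m)}_{i,k} = [X^+_{i,k}, X^{(-,m)}_{i,0}] \in K^m.
\]

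\emph{Step 5: part (2).} Directly from the definitions, $Z^{(m)}_{k+1} - Z^{(m)}_k = Z^{(m+1)}_k$ for $Z \in \{H_i, X^\pm_\beta\}$, and the right-hand side lies in $K^{m+1}$ by part (1). Telescoping from $0$ to $k$ handles $k>0$; for $k<0$ telescope in the other direction. So $Z^{(m)}_k \equiv Z^{(m)}_0$ modulo $K^{m+1}$.

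The main obstacle is the claim in Step 2 that the braid automorphisms $T_i$, $T_\eta$ preserve $K$. For $T_i$ with $i\in\mathring{I}$ and for $T_\eta$, it should follow from the fact, used implicitly in Theorem~\ref{thm:PBW-toroidal}, that the classical limit of the quantum root vectors is the classical one. For $T_0$ one must confirm, using the explicit formulas of \cite{He05}, that modulo $\hbar$ it acts by $r_0$ on $\mathfrak{g}\otimes\mathbb{C}[t^{\pm1}]$, without mixing in any $t$-dependence. Only preservation of $K$ is needed, not compatibility of $T_0$ with the shift $k\mapsto k+1$.
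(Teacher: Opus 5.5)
Your proposal is correct, and it takes a genuinely different route from the paper's proof. In fact it supplies what that proof is missing.

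\textbf{The paper's argument and why it fails.} The paper works purely in the classical limit. It computes $\psi(H^{(m)}_{i,k})=d_i h_i^{(k,m)}$ and $\psi(X^{(+,m)}_{\beta,k})=\sqrt{d_j}\,e_\beta^{(k,m)}$, notes that these lie in $\kappa^m$ (and, for part (2), that the differences lie in $\kappa^{m+1}$), and pulls back via ``$\psi^{-1}(\kappa^m)\subseteq K^m$''. That inclusion is false for $m\geq 2$. Since $\psi(K^m)=\kappa^m$ and $\ker\psi=\hbar\,U_\hbar(\mathfrak{g}^{\mathrm{tor}})$, one only has $\psi^{-1}(\kappa^m)=K^m+\hbar\,U_\hbar(\mathfrak{g}^{\mathrm{tor}})$, and $\hbar\notin K^2$. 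To see the last point, take the counit $\varepsilon$ sending all $X^\pm_{i,k}$, $H_{i,k}$ to $0$; it respects \eqref{eq:QT1}--\eqref{eq:QT6}. Modulo $\hbar$ it factors through $U(\mathfrak{g})$ and its counit, so $\varepsilon(K)\subseteq\hbar\,\mathbb{C}[[\hbar]]$ and $\varepsilon(K^2)\subseteq\hbar^2\,\mathbb{C}[[\hbar]]$, whereas $\varepsilon(\hbar)=\hbar$. Thus the classical computation only places the alternating sums in $K^m+\hbar\,U_\hbar(\mathfrak{g}^{\mathrm{tor}})$, which says nothing in $\mathrm{gr}_K$.

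\textbf{What your route does instead.} You write the alternating sums as $(\pm\mathrm{ad}\,E)^m$ applied to a single generator with $E\in K$, and use that every $K^a$ is a two-sided ideal. You then get the $H$'s from $H^{(m)}_{i,k}=[X^+_{i,k},X^{(-,m)}_{i,0}]$, and part (2) from the exact identity $Z^{(m)}_{k+1}-Z^{(m)}_k=Z^{(m+1)}_k$ combined with part (1). This gives genuine membership in $K^m$ and $K^{m+1}$. The cost is that you must exhibit explicit elements of $K$ and control them under the braid operators. The paper's route, had the pullback been valid, would have used only the classical limits of the root vectors.

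\textbf{Two smaller points.}

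First, do not check $E_j\in K$ against the stated formula for $\Upsilon$. Its uniform factor $d_i^{-1}$ on every $h_i^{(k)}$ is incompatible with \eqref{eq:QT2}--\eqref{eq:QT3} when $d_j\neq 1$: there $\overline{H_{j,0}}$ acts with eigenvalue $2d_j$, but $\overline{H_{j,1}}$ shifts with coefficient $2$. Used literally, that formula would give $\psi(E_j)=\tfrac12\bigl(d_jh_j^{(1)}-h_j^{(0)}\bigr)\notin\kappa$. A normalization-free argument is better. The image of $E_j$ in $U(\mathfrak{g})$ is some $c\,h_j$. It commutes with the nonzero image of $X^+_{j,0}$, because $[E_j,X^+_{j,0}]=X^+_{j,1}-X^+_{j,0}$ maps to $0$. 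Since $[h_j,e_j]=2e_j$, this forces $c=0$.

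Second, the obstacle you flag in Step~2 is milder than it looks: you only need $T_w(E_j)\in K$, not $T_w(K)\subseteq K$. By \eqref{eq:QT4}, $H_{j,1}=e^{-\hbar H_{j,0}}[X^+_{j,1},X^-_{j,0}]$. So the image of $T_w(E_j)$ in $U(\mathfrak{g})$ is a combination of the images of $[X^+_{\alpha,1},X^-_{\alpha,0}]$ and of $T_w(H_{j,0})$; the latter is a multiple of $h_\alpha$ by the horizontal Lusztig action. Suppose the images of $X^\pm_{\alpha,k}$ are nonzero multiples of $e_\alpha$, $f_\alpha$ independent of $k$. This is exactly the classical-limit fact the paper itself uses in Theorem~\ref{thm:PBW-toroidal} and in its own proof of this lemma. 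Then that image is a multiple of $h_\alpha$ commuting with $e_\alpha$, hence $0$ by the same argument. So you need no explicit formulas for $T_0$ on the whole toroidal algebra.
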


\begin{proof}
For part (1): the map $\psi$ sends (cf.~\eqref{eq:alternating}) $H^{(m)}_{i,k} \mapsto d_i h_i^{(k,m)}$ and
$X^{(+,m)}_{\beta,k} \mapsto \sqrt{d_j}\, e_\beta^{(k,m)}$ (where $\beta = \eta r_{i_1}\cdots r_{i_l}(\alpha_j)$),
which lie in $\kappa^m$ by definition. Since $\psi^{-1}(\kappa^m) \subseteq K^m$, the claim follows.

For part (2): from the identity $\jmath(z^{(k,m)} - z^{(0,m)}) = z(t^k-1)(t-1)^m$
(see Section~\ref{sec:non-quantum}), the difference $h_i^{(k,m)} - h_i^{(0,m)}$ lies in
$\kappa^{m+1}$, and hence $H^{(m)}_{i,k} - H^{(m)}_{i,0} \in K^{m+1}$.
The argument for $X^{(\pm,m)}_{\beta,k}$ is identical.
\end{proof}

 These results will be applied in the injectivity argument of Theorem~\ref{thm:main}.

\subsection{Filtration compatibility of the star product}
\label{sec:star-product}
 To characterize the powers of~$K$ explicitly, we transfer the problem to
$U(\mathfrak{g}^{\mathrm{tor}})[[\hbar]]$ via the $\mathbb{C}[[\hbar]]$-module isomorphism 
$$\Phi \colon U_\hbar(\mathfrak{g}^{\mathrm{tor}}) \xrightarrow{\;\cong\;}
U(\mathfrak{g}^{\mathrm{tor}})[[\hbar]]$$ of
Corollary~\ref{cor:flat}, and define a simultaneous filtration combining the
$\hbar$-adic and $\kappa$-adic structures. Let $*$ denote the transferred multiplication (star product) on
$U(\mathfrak{g}^{\mathrm{tor}})[[\hbar]]$. For $a, b \in U(\mathfrak{g}^{\mathrm{tor}})$,
write
\[
  a * b = ab + \sum_{k \geq 1} \hbar^k\, \Omega_k(a,b),
\]
where each $\Omega_k \colon U(\mathfrak{g}^{\mathrm{tor}}) \times
U(\mathfrak{g}^{\mathrm{tor}}) \to U(\mathfrak{g}^{\mathrm{tor}})$ is the
$\mathbb{C}$-bilinear map given by the $\hbar^k$-coefficient of the star product.
Define
\begin{equation}
  \label{eq:cF-def}
  \mathcal{F}^p := \Bigl\{\, x = \textstyle\sum_{n \geq 0} \hbar^n u_n
    \;\Big|\; u_n \in \kappa^{\max(p-n,\,0)} \text{ for all } n \geq 0\,\Bigr\}
  \;\subseteq\; U(\mathfrak{g}^{\mathrm{tor}})[[\hbar]],
\end{equation}
where $p\in \mathbb{Z}_{\geq 0}$. Note that $\mathcal{F}^0 = U(\mathfrak{g}^{\mathrm{tor}})[[\hbar]]$
is the whole module, so $(\mathcal{F}^p)_{p \geq 0}$ is a decreasing filtration
of $U(\mathfrak{g}^{\mathrm{tor}})[[\hbar]]$ by $\mathbb{C}[[\hbar]]$-submodules.
By construction, $\Phi^{-1}(\mathcal{F}^1) = K$.
To prove $K^p = \Phi^{-1}(\mathcal{F}^p)$ for all $p \geq 1$, we show that
$(\mathcal{F}^p)_{p \geq 0}$ is a filtration of algebras under~$*$.
The main ingredient is the following control of the quantum corrections $\Omega_k$.

\begin{lemm}
\label{lem:bk-stability}
For all $s, t \geq 0$ and $k \geq 1$,
\begin{equation}
  \label{eq:bk-bound}
  \Omega_k(\kappa^s,\, \kappa^t) \;\subseteq\; \kappa^{\max(s+t-k,\, 0)},
\end{equation}
where $\kappa^0 = U(\mathfrak{g}^{\mathrm{tor}})$.
\end{lemm}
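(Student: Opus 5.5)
The plan is to reduce \eqref{eq:bk-bound} to a statement about commutators of generators, and to make the $\kappa$-order visible by choosing a basis adapted to it.

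\emph{Step 1: a $\kappa$-adapted basis.} By Proposition~\ref{prop:PBW-toroidal-Lie} and the isomorphism $\jmath$, the ordered monomials in the elements $z^{(0,m)}$ form a $\mathbb{C}$-basis of $U(\mathfrak{g}^{\mathrm{tor}})$. Here $z\in\{e_\beta,f_\beta,h_i\}$ and $m\in\mathbb{Z}_+$, and under $\jmath$ these correspond to $z\,(t-1)^m$. Assign such a monomial the weight $\sum m_j$. By Proposition~\ref{prop:non-quantum}, $\kappa^s$ is exactly the span of the monomials of weight $\ge s$. I would check this directly: $\kappa^s$ is spanned by products with total $(t-1)$-degree at least $s$, and PBW straightening in $U(\mathfrak{g}[t^{\pm1}])$ does not lower this degree, since $[z_1(t-1)^{m_1},z_2(t-1)^{m_2}]=[z_1,z_2](t-1)^{m_1+m_2}$. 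By bilinearity it therefore suffices to prove \eqref{eq:bk-bound} when $a$ and $b$ are such monomials of weights $s$ and $t$.

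\emph{Step 2: lifting to the quantum side.} The isomorphism $\Phi$ of Corollary~\ref{cor:flat} is fixed by the lifted basis, and I may choose it to be given by the ordered monomials in the alternating elements $H^{(m)}_{i,0}$ and $X^{(\pm,m)}_{\beta,0}$ of \eqref{eq:alternating}. These lift the basis of Step~1 up to scalars, by Lemma~\ref{lem:K-membership}(1) and the classical-limit computation. Then $\Omega_k(a,b)$ is the $\hbar^k$-coefficient obtained when the product of two ordered quantum monomials is re-expanded in ordered quantum monomials. The claim then reads: every $\hbar^k$-correction produced by straightening loses at most $k$ units of weight.

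\emph{Step 3: local commutation estimates.} This is the key step. I would rewrite each defining relation \eqref{eq:QT1}--\eqref{eq:QT6} in terms of the alternating elements, following the computation of Guay--Ma \cite{GM12} for quantum loop algebras. For example, taking alternating sums of \eqref{eq:QT5} gives, for $i,j\in I$,
\[
[X^{(\pm,m_1+1)}_{i,0},X^{(\pm,m_2)}_{j,0}]-[X^{(\pm,m_1)}_{i,0},X^{(\pm,m_2+1)}_{j,0}]
=\pm\tfrac{\hbar(\alpha_i,\alpha_j)}{2}\{X^{(\pm,m_1)}_{i,0},X^{(\pm,m_2)}_{j,0}\}+\hbar^2(\cdots).
\]
Each factor of $\hbar$ is accompanied by a drop of exactly one in weight, which is the Yangian relation \eqref{eq:Y5} in disguise. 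Similarly, expanding \eqref{eq:QT3} with $\tfrac1k[ka_{ij}]_i=a_{ij}+O(\hbar^2)$, \eqref{eq:QT4}, and the exponential series \eqref{eq:Phi} should show the following. Whenever $u\in\kappa^{m_1}$ and $v\in\kappa^{m_2}$ are lifts of generators, the $\hbar^k$-part of $u*v-v*u$ lies in $\kappa^{m_1+m_2-k}$. For \eqref{eq:Phi}, each $\hbar$ comes with one more zero-mode factor $H_{i,0}$ or $(q-q^{-1})$, so it never raises the weight. For non-simple real roots I would use the fact that the automorphisms $T_i$ and $T_\eta$ commute with the shift automorphism $X^\pm_{j,k}\mapsto X^\pm_{j,k+1}$. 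This holds up to the scalar adjustments for $i=0$, since the $\mathcal{X}$'s are not involved in Definition~\ref{def:quantum-root-vectors}. As a consequence, these automorphisms preserve $K$ and the alternating-sum structure. Imaginary root vectors then follow from the $q$-commutator definition, whose $\hbar$-correction $q_i^{-2}-1=O(\hbar)$ again costs at most one weight.

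\emph{Step 4: globalisation.} I would conclude by induction on the total length of the two monomials. Straightening proceeds by successive transpositions of adjacent factors. By Step~3, each transposition produces an $\hbar^k$-term whose weight is at least the original weight minus $k$. The $\hbar$-exponents and weight losses add along each branch of the rewriting, so the total $\hbar^k$-coefficient lies in $\kappa^{\max(s+t-k,0)}$. Termination is a bounded-length argument for each fixed power of $\hbar$, which holds because the classical straightening terminates.

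The main obstacle is Step~3 for the non-simple root vectors. There I need the braid automorphisms $T_i$ to be compatible with the weight filtration to all orders in $\hbar$, and not merely modulo $\hbar$. I would first try to prove directly that the $T_i$ preserve every $\Phi^{-1}(\mathcal{F}^p)$ by checking this on generators. If that fails, I would fall back on expressing each $X^\pm_{\beta,k}$ as an iterated $q$-commutator of simple ones and controlling it inductively.
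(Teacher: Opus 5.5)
Your proposal takes a different route from the paper, and it has two genuine gaps.

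\textbf{The ``$\kappa$-adapted basis'' of Steps 1--2 is not a basis.} Under $\jmath$, $z^{(0,m)}\mapsto z(t-1)^m$, and $\{(t-1)^m\}_{m\ge0}$ spans $\mathbb{C}[t]$, not $\mathbb{C}[t^{\pm1}]$. So your monomials span only the preimage of $U(\mathfrak{g}\otimes\mathbb{C}[t])$; for example, $e_\beta^{(-1)}$ is not in their span. Proposition~\ref{prop:non-quantum} concerns only $\mathrm{gr}_\kappa$, which cannot distinguish $\mathbb{C}[t]$ from $\mathbb{C}[t^{\pm1}]$. It therefore does not give ``$\kappa^s=$ span of monomials of weight $\ge s$'', and that statement is already false for $s=0$. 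Consequently the monomials in $X^{(\pm,m)}_{\beta,0}$, $H^{(m)}_{i,0}$ do not reduce to a basis mod $\hbar$, Lemma~\ref{lem:top-nakayama} does not apply, and no $\Phi$ is defined.

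This can be repaired with a genuinely adapted basis, e.g.\ $p_m=t^{-\lfloor m/2\rfloor}(t-1)^m$. Each $p_m$ has exact order $m$ at $t=1$, and $p_0,\dots,p_{2n+1}$ span $\mathrm{span}\{t^{-n},\dots,t^{n+1}\}$. Lift them to $X^{(\pm,m)}_{\beta,-\lfloor m/2\rfloor}$ and $H^{(m)}_{i,-\lfloor m/2\rfloor}$; linear identities in $t$ lift exactly, since $f\mapsto\sum_k f_kX^\pm_{\beta,k}$ is linear. Step 3 must then be redone with moving base points and must also treat the $\Phi^-$-modes.

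You should also say explicitly that $\Omega_k$ depends on $\Phi$. Changing the lifted basis conjugates the star product by some $1+\hbar D_1+\hbar^2D_2+\cdots$, and \eqref{eq:bk-bound} is not invariant under that. You would be proving the bound for a different star product than the one attached to $B_\hbar(U,\prec)$. That is enough for Proposition~\ref{prop:cF-multiplicative} and Corollary~\ref{cor:hbar-nzd}, which only use that the $\hbar^0$-part of $\Phi$ is $\psi$, but it is not literally the stated lemma.

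\textbf{Step 3 is not established for non-simple real and imaginary root vectors.} The commutation fact you invoke is false.
\begin{itemize}
\item The map $X^\pm_{j,k}\mapsto X^\pm_{j,k+1}$ is not an algebra map. Classically, \eqref{eq:QT4} would force $h_it^n\mapsto h_it^{n+2}$ while $e_jt^l\mapsto e_jt^{l+1}$, which contradicts \eqref{eq:QT3}.
\item The genuine shift $X^\pm_{j,k}\mapsto X^\pm_{j,k\pm1}$ reduces mod $\hbar$ to the twist $x_\alpha t^k\mapsto x_\alpha t^{k+\mathrm{ht}\,\alpha}$. This does not commute with $r_i$: $r_i(e_it^{k+1})=-f_it^{k+1}$, whereas the twist of $r_i(e_it^k)$ is $-f_it^{k-1}$.
\end{itemize}
What is automatic is weaker. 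By linearity $X^{(\pm,m)}_{\beta,k}=T_w(X^{(\pm,m)}_{j,k})$, and $T_w(K^m)=K^m$ since $T_w$ reduces to the $t$-linear $r_w$. That only recovers Lemma~\ref{lem:K-membership}. It says nothing about the $\hbar^k$-coefficients produced when root vectors for different roots are straightened past each other, which is exactly what \eqref{eq:bk-bound} measures.

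Your first remedy, checking on generators that $T_i$ preserves each $\Phi^{-1}(\mathcal{F}^p)$, is circular. A check on generators suffices only for a multiplicative filtration, and multiplicativity of $\Phi^{-1}(\mathcal{F}^\bullet)$ is what the lemma is supposed to deliver. The fallback via iterated $q$-commutators would need weight-loss estimates for all mutual straightening relations among real and imaginary root vectors. These include the nonlinear terms of \eqref{eq:Phi}, where the relevant mechanism is that sums like $\sum_{b+c=a}H_{i,b}H_{i,c}$ act as divided differences. The point is not that ``each $\hbar$ brings a zero mode''. This is the real content, and it is not supplied.

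\textbf{Comparison with the paper.} The paper never changes basis. It keeps the loop-index PBW basis and characterizes $\kappa^s$ by the order of vanishing at $t=1$, via difference operators on coefficients. In place of your Steps 3--4 it makes one claim: the $\hbar^k$-structure constants are polynomials of degree $\le k$ in the loop indices, and each loop-index factor consumes at most one difference operator. The paper reads that bound off \eqref{eq:QT1}--\eqref{eq:QT6} without treating the $T_w$-defined root vectors separately. So the obstacle you isolate is real, and your proposal does not close it.
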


\begin{proof}
We track the \emph{loop degree} of an element of $U(\mathfrak{g}^{\mathrm{tor}})$,
defined as its order of vanishing at $t = 1$ under the isomorphism $\jmath \colon U(\mathfrak{g}^{\mathrm{tor}}) \xrightarrow{\sim} U(\mathfrak{g}[t^{\pm 1}])$
induced at the level of universal enveloping algebras by the Lie algebra isomorphism
of Proposition~\ref{prop:toroidal-iso}.
Concretely, the ideal $\kappa$ is spanned by combinations
$\sum_{m \in \mathbb{Z}} c_m\, z^{(m)}$
of classical generators $z^{(m)} \in \{e_i^{(m)},\, f_i^{(m)},\, h_i^{(m)}\}$
with $\sum_m c_m = 0$.  More generally, the subspace $\kappa^s \subset U(\mathfrak{g}^{\mathrm{tor}})$ is spanned by products of $s$ such elements, which correspond under $\jmath$ to elements with order at least $s$ vanishing at $t=1$. This structure is equivalently described by the application of $s$ independent linear difference operators to the classical PBW monomials.

\medskip
\noindent\emph{Step~1: Polynomial dependence of $\Omega_k$ on the loop indices.}
The quantum corrections $\Omega_k$ between two PBW generators
(e.g., $\Omega_k(e_\beta^{(m)},\, e_\gamma^{(l)})$ or
$\Omega_k(e_\beta^{(m)},\, h_i^{(l)})$, etc.)
are entirely determined by the $\hbar$-expansion of the straightening relations
\eqref{eq:QT1}--\eqref{eq:QT6}.  For each $k$, the result takes the form
\begin{equation}
  \label{eq:Bk-poly}
  \Omega_k\bigl(z_1^{(m)},\, z_2^{(l)}\bigr)
  = \sum_\alpha P_\alpha^{(k)}(m,l)\, M_\alpha,
\end{equation}
where $z_1^{(m)}, z_2^{(l)}$ are any two PBW generators with loop indices $m, l$,
the $M_\alpha$ are classical PBW monomials, and the $P_\alpha^{(k)}(m,l)$ are
polynomials in the loop exponentials $t^m, t^l$ together with direct powers of
the loop indices $m, l$.

The direct dependence on~$m$ and $l$ (as opposed to $t^m$ and $t^l$) arises
exclusively from the expansion of the quantum integer coefficient in relation
\eqref{eq:QT3}:
\begin{equation}
  \label{eq:quantum-int-expansion}
  \frac{[k\, a_{ij}]_i}{k}
  = a_{ij}\sum_{r=0}^{\infty} c_{2r}\,(k\, d_i a_{ij})^{2r}\,\hbar^{2r},
\end{equation}
which at order $\hbar^j$ contributes a polynomial of degree~$j$ in~$k$.

An exhaustive inspection of the defining relations \eqref{eq:QT1}--\eqref{eq:QT6} reveals that direct polynomial dependence on the loop indices $m, l$ is strictly constrained by the power of $\hbar$:
\begin{itemize}
    \item Relations \eqref{eq:QT1}, \eqref{eq:QT2}, and \eqref{eq:QT6} are either central, exact in $\hbar$, or involve constants (such as quantum binomial coefficients) that do not depend on the loop parameters.
    \item Relation \eqref{eq:QT3} is the primary source of polynomial growth. The expansion \eqref{eq:quantum-int-expansion} shows that each factor of $\hbar^j$ is accompanied by a term of degree at most $j$ in the index $k$.
    \item For relation \eqref{eq:QT4}, the elements $\Phi^\pm_{i,k+l}$ are defined by the generating series \eqref{eq:Phi} as polynomial expressions in the Cartan generators $H_{i,m}$, and depend on $k+l$ purely through the mode index (i.e., via $t^{k+l}$), not polynomially in $k+l$ directly. 
    \item For relation \eqref{eq:QT5}, the factors $q_i^{\pm a_{ij}} = \exp(\pm \hbar d_i a_{ij})$ expand as constants independent of the loop indices $k, l$; all loop dependence enters through the affine shifts $k \mapsto k+1$, $l \mapsto l+1$, which translate to multiplication by $t$ in the loop variable. 
\end{itemize}
Neither relation \eqref{eq:QT4} nor \eqref{eq:QT5} introduces direct polynomial dependence on the loop indices. When composing multiple reordering steps, the loop indices of intermediate generators are affine functions of the original indices (for example, $k+l$ in place of $k$). Since affine substitutions preserve polynomial degree, the composition of a step contributing direct degree $j_1$ at order $\hbar^{j_1}$ with a step contributing degree $j_2$ at order $\hbar^{j_2}$ yields total direct degree at most $j_1 + j_2$ at order $\hbar^{j_1 + j_2}$.

Since the full PBW straightening algorithm consists of a finite sequence of such reorderings, and each application of a relation at order $\hbar^j$ produces a polynomial in the indices of degree at most $j$, the total direct polynomial degree of $P_\alpha^{(k)}$ in $(m,l)$ is bounded by $k$.

\medskip
\noindent\emph{Step~2: Interaction of difference operators with polynomial
coefficients.}
Let $u \in \kappa^s$ and $v \in \kappa^t$.
Expanding $u$ and $v$ in the classical PBW basis, write
$u = \sum_\xi a_\xi M_\xi$ and $v = \sum_\eta b_\eta N_\eta$.
By the bilinearity of $\Omega_k$,
\[
  \Omega_k(u,v) = \sum_{\xi,\eta} a_\xi\, b_\eta \cdot \Omega_k(M_\xi, N_\eta).
\]
The condition $u \in \kappa^s$ means that the coefficients~$a_\xi$,
viewed as a function of the loop indices of $M_\xi$, encode $s$~independent
difference operators (each annihilating constants, i.e.,
each satisfying $\sum c_m = 0$).
Similarly, $b_\eta$ encodes $t$~such operators.

By Step~1, each $\Omega_k(M_\xi, N_\eta)$ depends on the loop indices through
a polynomial $P_\alpha^{(k)}$ of direct degree at most $k$ in $(m,l)$.
Each difference operator applied to a polynomial in $t^m$ increases the
vanishing order at $t = 1$ by~$1$.
However, a factor of~$m$ (as opposed to $t^m$) acts as the discrete analogue
of $t\tfrac{d}{dt}$, which can reduce the vanishing order by~$1$.
Since $P_\alpha^{(k)}$ has direct degree at most $k$ in $(m,l)$,
at most $k$ of the $s + t$ difference operators can be ``consumed'' by
these polynomial factors.
The remaining $\geq s + t - k$ operators ensure that $\Omega_k(u,v)$ vanishes
to order $\geq s + t - k$ at $t = 1$, providing
$\Omega_k(u,v) \in \kappa^{\max(s+t-k,\, 0)}$.
\end{proof}

\begin{prop}
\label{prop:cF-multiplicative}
For all $p, q \geq 0$, $\mathcal{F}^p * \mathcal{F}^q \subseteq \mathcal{F}^{p+q}$.
Consequently, $K^p = \Phi^{-1}(\mathcal{F}^p)$ for all $p \geq 1$.
\end{prop}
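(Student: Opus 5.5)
The plan is to prove multiplicativity by expanding both factors in powers of $\hbar$ and applying Lemma~\ref{lem:bk-stability} term by term. The identification $K^p=\Phi^{-1}(\mathcal{F}^p)$ then follows from one inclusion by multiplicativity and the other by a finite lifting argument.

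\emph{Multiplicativity.} Let $x=\sum_{n\ge0}\hbar^n u_n\in\mathcal{F}^p$ and $y=\sum_{m\ge0}\hbar^m v_m\in\mathcal{F}^q$, so that $u_n\in\kappa^{\max(p-n,0)}$ and $v_m\in\kappa^{\max(q-m,0)}$. Write $\Omega_0(a,b):=ab$. Since $*$ is $\mathbb{C}[[\hbar]]$-bilinear and $\hbar$-adically continuous (it is transported from $U_\hbar(\mathfrak{g}^{\mathrm{tor}})$ via $\Phi$), we get
\[
x*y=\sum_{N\ge0}\hbar^N\sum_{n+m+k=N}\Omega_k(u_n,v_m).
\]
For each $N$ the inner sum is finite. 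Put $s=\max(p-n,0)\ge p-n$ and $t=\max(q-m,0)\ge q-m$. For $k\ge1$, Lemma~\ref{lem:bk-stability} gives
\[
\Omega_k(u_n,v_m)\in\kappa^{\max(s+t-k,0)}\subseteq\kappa^{\max(p+q-N,0)},
\]
using that the powers of $\kappa$ decrease. For $k=0$ the same containment holds because $\kappa^s\kappa^t\subseteq\kappa^{s+t}$. Hence the $\hbar^N$-coefficient of $x*y$ lies in $\kappa^{\max(p+q-N,0)}$, that is, $x*y\in\mathcal{F}^{p+q}$.

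\emph{The inclusion $K^p\subseteq\Phi^{-1}(\mathcal{F}^p)$.} This follows immediately. Since $\Phi^{-1}(\mathcal{F}^1)=K$, each $p$-fold product of elements of $K$ maps into $\mathcal{F}^1*\cdots*\mathcal{F}^1\subseteq\mathcal{F}^p$. Because $\mathcal{F}^p$ is a submodule, all of $K^p$ maps into $\mathcal{F}^p$.

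\emph{The inclusion $\Phi^{-1}(\mathcal{F}^p)\subseteq K^p$.} Take $x\in\mathcal{F}^p$ and write
\[
x=\sum_{n<p}\hbar^n u_n+\hbar^p r .
\]
The tail satisfies $\Phi^{-1}(\hbar^p r)=\hbar^p\Phi^{-1}(r)\in K^p$, since $\hbar\in K$. So it suffices to treat the finitely many terms $\hbar^n u_n$ with $u_n\in\kappa^{p-n}$, and I proceed by increasing $n$.

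Write $u_n$ as a finite sum of products $a_1\cdots a_{p-n}$ with $a_j\in\kappa$. Each $a_j=\Phi(\tilde a_j)$ for some $\tilde a_j\in K$, so $w:=\tilde a_1\cdots\tilde a_{p-n}\in K^{p-n}$, and hence $\hbar^n w\in K^p$. By the multiplicativity already proved, $\Phi(w)\in\mathcal{F}^{p-n}$. Its $\hbar^0$-coefficient is exactly $a_1\cdots a_{p-n}$, because $*$ reduces to the classical product modulo $\hbar$. Therefore
\[
\Phi(w)-a_1\cdots a_{p-n}=\hbar z\quad\text{with } z\in\mathcal{F}^{p-n-1}.
\]
This uses that $\mathcal{F}^{p-n}\cap\hbar\,U(\mathfrak{g}^{\mathrm{tor}})[[\hbar]]=\hbar\mathcal{F}^{p-n-1}$, which is clear from \eqref{eq:cF-def}.

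Subtracting $\hbar^n\Phi(w)$ from $x$ therefore replaces $x$ by an element of $\mathcal{F}^p$ whose $\hbar^0,\dots,\hbar^n$ coefficients vanish. After at most $p$ such steps the remainder lies in $\hbar^p U(\mathfrak{g}^{\mathrm{tor}})[[\hbar]]$, which maps into $K^p$. Since only finitely many subtractions occur, no closedness of $K^p$ is needed.

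\emph{Main obstacle.} The genuinely hard input is Lemma~\ref{lem:bk-stability}, which is assumed here. Within this proposition, the only delicate points are the following.
\begin{itemize}
\item The clipped exponents $\max(\cdot,0)$ must still satisfy $s+t-k\ge p+q-N$; this holds because $s\ge p-n$ and $t\ge q-m$.
\item The interchange of $*$ with the infinite $\hbar$-expansions needs $\hbar$-adic continuity of $*$. This is inherited from $U_\hbar(\mathfrak{g}^{\mathrm{tor}})$ via the module isomorphism of Corollary~\ref{cor:flat}.
\end{itemize}
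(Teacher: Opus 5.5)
Your proposal is correct and is essentially the paper's argument. Multiplicativity is the same coefficient-wise application of Lemma~\ref{lem:bk-stability}, using $\max(p-n,0)+\max(q-m,0)-k\geq p+q-N$, and the forward inclusion is the same appeal to $(\mathcal{F}^1)^{*p}\subseteq\mathcal{F}^p$. For the reverse inclusion, the paper lifts only the $\hbar^0$-coefficient, writes the difference as $\hbar y$ with $y\in\mathcal{F}^{p-1}$, and inducts on $p$ from the base case $\Phi^{-1}(\mathcal{F}^1)=K$; your finite iteration over the $\hbar$-degree, closed off by $\hbar^p\,U_\hbar(\mathfrak{g}^{\mathrm{tor}})\subseteq K^p$, is exactly that induction unrolled.
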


\begin{proof}
Let $x \in \mathcal{F}^p$ and $y \in \mathcal{F}^q$.
The $n$-th coefficient of their star product is
\begin{equation}
  \label{eq:star-coeff}
  (x * y)_n = \sum_{j + m + k = n} \Omega_k(x_j,\, y_m),
\end{equation}
where $\Omega_0(x_j, y_m) := x_j y_m$.
By definition, $x_j \in \kappa^{\max(p-j,\,0)}$ and
$y_m \in \kappa^{\max(q-m,\,0)}$.
For $k = 0$, the product $x_j y_m$ lies in
$\kappa^{\max(p-j,0)+\max(q-m,0)}$ since $\kappa$ is a bilateral ideal.
For $k \geq 1$, Lemma~\ref{lem:bk-stability} gives
\[
  \Omega_k(x_j,y_m) \in \kappa^{\max(\max(p-j,0)+\max(q-m,0)-k,\;0)}.
\]
In both cases, using $\max(A,0) + \max(B,0) \geq A + B$ and $j + m + k = n$, we obtain
\[
  \max(p-j,0) + \max(q-m,0) - k \;\geq\; (p-j) + (q-m) - k
  \;=\; p + q - n.
\]
Hence every term in \eqref{eq:star-coeff} lies in $\kappa^{\max(p+q-n,\,0)}$,
proving $x * y \in \mathcal{F}^{p+q}$.

\medskip
We now prove $K^p = \Phi^{-1}(\mathcal{F}^p)$ by induction on $p$.
The case $p = 1$ holds by definition.

\smallskip
\noindent\emph{Direct inclusion.}
Since $K^p$ is generated by $p$-fold star products of elements in
$K = \Phi^{-1}(\mathcal{F}^1)$, the multiplicativity of
$(\mathcal{F}^1)^{*p} \subseteq \mathcal{F}^p$ gives
$\Phi(K^p) \subseteq \mathcal{F}^p$.

\smallskip
\noindent\emph{Reverse inclusion.}
Let $x = \sum_{n \geq 0} \hbar^n u_n \in \mathcal{F}^p$,
so that $u_0 \in \kappa^p$.
Since $\kappa^p$ is generated by products of $p$ elements of $\kappa$,
we may lift $u_0$ to a quantum product
$\tilde{u}_0 = c_1 * \cdots * c_p \in K^p$,
where each $c_i \in K$ is a lift of the corresponding classical factor.
By the direct inclusion, $\Phi(\tilde{u}_0) \in \mathcal{F}^p$, and
its $\hbar^0$-coefficient equals $u_0$.
The difference $x - \Phi(\tilde{u}_0)$ has vanishing $\hbar^0$-coefficient,
so we may write
\[
  x - \Phi(\tilde{u}_0) = \hbar\, y
  \quad \text{for a unique } y \in U(\mathfrak{g}^{\mathrm{tor}})[[\hbar]].
\]
Matching coefficients we obtain $y_{n-1} = x_n - \Phi(\tilde{u}_0)_n$.
Since both $x$ and $\Phi(\tilde{u}_0)$ lie in $\mathcal{F}^p$,
their $n$-th coefficients belong to $\kappa^{\max(p-n,\,0)}$.
Hence $y_{n-1} \in \kappa^{\max(p-n,\,0)} = \kappa^{\max((p-1)-(n-1),\,0)}$,
giving $y \in \mathcal{F}^{p-1}$.
By the inductive hypothesis, $y \in \Phi(K^{p-1})$,
thus $\hbar\, y \in \hbar\,\Phi(K^{p-1}) \subseteq \Phi(K \cdot K^{p-1})
= \Phi(K^p)$.
Since $\Phi(\tilde{u}_0) \in \Phi(K^p)$ as well,
$x \in \Phi(K^p)$.
\end{proof}

\begin{coro}
\label{cor:hbar-nzd}
The element $\bar\hbar \in K/K^2$ is a non-zero-divisor in
$\mathrm{gr}_K\, U_\hbar(\mathfrak{g}^{\mathrm{tor}})$.
More precisely, if $x \in K^p$ and $\hbar x \in K^{p+2}$,
then $x \in K^{p+1}$.
\end{coro}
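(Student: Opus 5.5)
Transport the question through the module isomorphism $\Phi$ of Corollary~\ref{cor:flat}. By Proposition~\ref{prop:cF-multiplicative} we have $K^p = \Phi^{-1}(\mathcal{F}^p)$ for every $p \geq 0$. The key observation is that $\Phi$ is $\mathbb{C}[[\hbar]]$-linear, so multiplication by $\hbar$ in $U_\hbar(\mathfrak{g}^{\mathrm{tor}})$ corresponds to the coefficient shift $\sum_n \hbar^n u_n \mapsto \sum_n \hbar^{n+1} u_n$ on $U(\mathfrak{g}^{\mathrm{tor}})[[\hbar]]$. Note that $\hbar$ is central, so $\hbar x = \hbar * x$ and no star-product corrections arise. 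The whole statement therefore reduces to a coefficientwise check in the explicit filtration \eqref{eq:cF-def}.

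\emph{Main step.} Let $x \in K^p$ with $\Phi(x) = \sum_{n\ge 0} \hbar^n u_n$, so $u_n \in \kappa^{\max(p-n,0)}$. Then $\Phi(\hbar x) = \sum_{n \ge 0}\hbar^{n+1}u_n$, whose $\hbar^{n+1}$-coefficient is $u_n$ and whose $\hbar^0$-coefficient is $0$.

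The hypothesis $\hbar x \in K^{p+2} = \Phi^{-1}(\mathcal{F}^{p+2})$ gives $u_n \in \kappa^{\max(p+2-(n+1),0)} = \kappa^{\max(p+1-n,0)}$ for all $n \ge 0$. The $\hbar^0$ condition is vacuous, since $0 \in \kappa^{p+2}$. This is exactly the condition $\Phi(x) \in \mathcal{F}^{p+1}$, so $x \in K^{p+1}$.

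\emph{Non-zero-divisor statement.} Since $\mathrm{gr}_K$ is graded and $\bar\hbar$ is homogeneous of degree $1$, it suffices to test homogeneous elements. Suppose $\bar x \in K^p/K^{p+1}$ satisfies $\bar\hbar\,\bar x = 0$ in $K^{p+1}/K^{p+2}$. This means $\hbar x \in K^{p+2}$, so $x \in K^{p+1}$ by the main step, i.e.\ $\bar x = 0$. Right multiplication is the same because $\hbar$ is central. For a general element, multiplying by $\bar\hbar$ acts on each graded component separately, so annihilation forces each component to vanish.

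\emph{Main obstacle.} The only real content is the identification $K^p = \Phi^{-1}(\mathcal{F}^p)$ for all $p$, including $p = 0$ and the index $p+2$, and that identification is supplied by Proposition~\ref{prop:cF-multiplicative}. Two points need attention once that is granted. First, one must confirm that $\Phi$ genuinely intertwines $\hbar\cdot$ with the coefficient shift; this holds because $\Phi$ is a $\mathbb{C}[[\hbar]]$-module map. Second, the boundary index $n$ with $p+1-n \le 0$ must be handled; there the required membership is in $\kappa^0$, which is trivial.
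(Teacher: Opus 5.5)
Your proposal is correct and is essentially the paper's own proof: you transport through $\Phi$, invoke $K^p=\Phi^{-1}(\mathcal{F}^p)$ from Proposition~\ref{prop:cF-multiplicative}, and observe that, since $\Phi$ is $\mathbb{C}[[\hbar]]$-linear, the condition $\hbar x\in\mathcal{F}^{p+2}$ unwinds coefficientwise to exactly $x\in\mathcal{F}^{p+1}$. Your extra remarks (the trivial case $p=0$, the centrality of $\hbar$, and the reduction of the non-zero-divisor claim to homogeneous components) make explicit what the paper leaves implicit, and you are right that all the substance lies in Proposition~\ref{prop:cF-multiplicative}.
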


\begin{proof}
By Proposition~\ref{prop:cF-multiplicative},
$K^p = \Phi^{-1}(\mathcal{F}^p)$.
Write $x = \sum_{n \geq 0} \hbar^n u_n \in \mathcal{F}^p$.
Then $\hbar x = \sum_{n \geq 0} \hbar^{n+1} u_n$, and $\hbar x \in \mathcal{F}^{p+2}$
requires $u_n \in \kappa^{\max(p+2-(n+1),\,0)} = \kappa^{\max(p+1-n,\,0)}$
for all $n \geq 0$.
This is precisely the defining condition for $x \in \mathcal{F}^{p+1} = \Phi(K^{p+1})$.
\end{proof}

\subsection{The main isomorphism}
\label{sec:main-iso}

\begin{theorem}
\label{thm:main}
There exists an isomorphism of $\mathbb{C}[\hbar]$-algebras
\[
  \Pi \colon Y_\hbar(\mathfrak{g}) \xrightarrow{\;\sim\;}
  \mathrm{gr}_K\, U_\hbar(\mathfrak{g}^{\mathrm{tor}})
\]
defined on generators by
\[
  h_{i,m} \mapsto \overline{H^{(m)}_{i,0}}, \qquad
  x^\pm_{i,m} \mapsto \overline{X^{(\pm,m)}_{i,0}} \quad (\text{Yangian root vectors as in \eqref{eq:yangian-root-vector}}),
\]
where $\overline{(\,\cdot\,)}$ denotes the image in $K^m/K^{m+1}$.
\end{theorem}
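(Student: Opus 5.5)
The proof splits into three parts: $\Pi$ is a well-defined homomorphism, it is surjective, and it is injective. This is the standard Guay--Ma scheme, now supported by the flatness and filtration results established above.

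\emph{Well-definedness.} Lemma~\ref{lem:K-membership}(1) places $H^{(m)}_{i,0}$ and $X^{(\pm,m)}_{i,0}$ in $K^m$, and $\hbar\in K$, so the assignment lands in the correct graded pieces. We then check the relations \eqref{eq:Y1}--\eqref{eq:Y6} in $\mathrm{gr}_K$.
\begin{itemize}
\item \eqref{eq:Y1} and \eqref{eq:Y3} follow from \eqref{eq:QT1}--\eqref{eq:QT2} at once.
\item For \eqref{eq:Y2}, take the alternating sum of \eqref{eq:QT4}. The bracket $[X^{(+,m)}_{i,0},X^{(-,n)}_{j,0}]$ is a double alternating sum. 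Rewriting $X^-_{i,b}$ via shifts, and using Lemma~\ref{lem:K-membership}(2) to replace $X^{(-,n)}_{i,0}$ by $X^{(-,n)}_{i,k}$ modulo $K^{n+1}$, this double sum collapses to $H^{(m+n)}_{i,0}$ modulo $K^{m+n+1}$.
\item For \eqref{eq:Y5}, expand $q_i^{\pm a_{ij}}=1\pm\hbar d_ia_{ij}+O(\hbar^2)$ in \eqref{eq:QT5} and rewrite the relation in terms of $X^{(\pm,\cdot)}$. The degree-$(m+n+1)$ part then reads exactly
\[[X^{(\pm,m+1)},X^{(\pm,n)}]-[X^{(\pm,m)},X^{(\pm,n+1)}]=\pm\tfrac{\hbar(\alpha_i,\alpha_j)}{2}\{X^{(\pm,m)},X^{(\pm,n)}\},\]
where the $O(\hbar^2)$ terms fall into $K^{m+n+2}$.
\item \eqref{eq:Y4} is obtained the same way from \eqref{eq:QT3}, using $\Phi$-series manipulations as in \cite{GM12}.
\item \eqref{eq:Y6} follows from \eqref{eq:QT6}, since $\binom{r}{a}_i\equiv\binom{r}{a}$ modulo $\hbar$, which lies in higher filtration.
\end{itemize}
Finally, $\Pi$ is $\mathbb{C}[\hbar]$-linear because $\hbar\mapsto\bar\hbar$.

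\emph{Surjectivity.} $\mathrm{gr}_K$ is generated by $\bar\hbar$ together with the images of the classes of $K/K^2$ and degree $0$. The degree-zero piece is $K^0/K=U(\mathfrak g)$, which is hit by $\Pi(x^\pm_{i,0}),\Pi(h_{i,0})$. By Proposition~\ref{prop:cF-multiplicative}, $K^m/K^{m+1}\cong\bigoplus_{n\le m}\hbar^n\,\kappa^{m-n}/\kappa^{m-n+1}$. By Proposition~\ref{prop:non-quantum}, $\kappa^{j}/\kappa^{j+1}$ is spanned by products of the $\overline{z^{(0,j_r)}}$ with $\sum j_r=j$. Each such product is the image of products of $\Pi$ of generators, up to higher $\hbar$-order corrections, and these are handled by induction on $n$. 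To reach the elements attached to nonsimple roots, we need $\Pi(x^\pm_{\beta,m})\equiv \overline{X^{(\pm,m)}_{\beta,0}}$ up to a unit and lower-order terms. This follows by comparing leading terms: \eqref{eq:leading-term} on the Yangian side, and Lemma~\ref{lem:K-membership} together with the classical nested-commutator description on the toroidal side.

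\emph{Injectivity, which is the main obstacle.} Here the goal is to show that $\Pi$ maps the spanning set $B_\hbar(Y,\prec)$ of Proposition~\ref{prop:spanning} to a $\mathbb{C}[\hbar]$-linearly independent set. The plan is as follows.
\begin{enumerate}
\item Reduce modulo $\bar\hbar$. Because $\bar\hbar$ is a non-zero-divisor in $\mathrm{gr}_K$ (Corollary~\ref{cor:hbar-nzd}), a nontrivial $\mathbb{C}[\hbar]$-relation among images can be divided by the maximal power of $\bar\hbar$. This leaves a relation with a nonzero constant term.
\item Identify the quotient. By Proposition~\ref{prop:cF-multiplicative}, $\mathrm{gr}_K/\bar\hbar\,\mathrm{gr}_K\cong\mathrm{gr}_\kappa U(\mathfrak g^{\mathrm{tor}})$.
\item Compare with the classical picture. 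In $\mathrm{gr}_\kappa U(\mathfrak g^{\mathrm{tor}})$, the images of the PBW monomials in $x^\pm_{\beta,m},h_{i,m}$ are exactly $\pi$ of the PBW monomials of $U(\mathfrak g[u])$. These are linearly independent by Proposition~\ref{prop:non-quantum}, which contradicts the relation obtained in step 1.
\end{enumerate}
The delicate point is the identification of $\Pi(x^\pm_{\beta,m})$ modulo $\bar\hbar$ with $\pi(e_\beta u^m)$ up to a scalar for all $\beta\in R^+$, including imaginary roots and the non-simply-laced cases. To handle it, I would first match the braid group actions: $T_i$ reduces to $r_i$ classically, and $\tau_i$ reduces to $r_i$ on the associated graded. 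Then I would use the $J$-congruence \eqref{eq:J-mod} to control lower-order terms. Once this is done, the independence gives injectivity, and it also shows that $B_\hbar(Y,\prec)$ is a basis.
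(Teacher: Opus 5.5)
Your proposal follows the paper's proof essentially step for step. The relations are checked in $\mathrm{gr}_K$ as in Steps (I)--(IV). Surjectivity comes from lifting through $\bar\psi$ onto $\mathrm{gr}_\kappa U(\mathfrak g^{\mathrm{tor}})\cong U(\mathfrak g[u])$, with induction on the $\hbar$-order. Injectivity comes from cancelling powers of $\bar\hbar$ via Corollary~\ref{cor:hbar-nzd} and comparing with the PBW basis of $U(\mathfrak g[u])$, as in Stages 1--2 of Step (VI).

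Your ``delicate point'' is real; the paper simply asserts that $\Pi$ sends each Yangian ordered monomial to the corresponding toroidal one. It is, however, easier than your plan suggests, because only the images modulo $\bar\hbar$ matter. Since $\bar\psi\circ\Pi$ is an algebra map and each $\tau_i$ is a product of exponentials of $\mathrm{ad}$ of degree-$0$ elements, $\bar\psi\circ\Pi$ intertwines $\tau_i$ with the classical action $r_i\otimes\mathrm{id}$ on $\mathfrak g\otimes u^m$. So neither a comparison with $T_i$ nor the $J$-congruence is needed. For surjectivity, non-simple roots are not needed at all.

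There are two small slips.

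\emph{The check of \eqref{eq:Y2}.} Replacing $X^{(-,n)}_{i,0}$ by shifted copies via Lemma~\ref{lem:K-membership}(2) inside an alternating sum only controls the errors modulo $K^{n+1}$, not modulo $K^{m+n+1}$. In fact those errors add up to the whole answer. Use the Vandermonde convolution instead, which gives $[X^{(+,m)}_{i,0},X^{(-,n)}_{j,0}]=\delta_{ij}H^{(m+n)}_{i,0}$ exactly.

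\emph{The coefficient in \eqref{eq:Y5}.} With $q=e^{\hbar}$, the factor $q_i^{\pm a_{ij}}-1=\pm\hbar(\alpha_i,\alpha_j)+O(\hbar^2)$ multiplies $X_{i,k}X_{j,l+1}+X_{j,l}X_{i,k+1}\equiv\{X_{i,k},X_{j,l}\}$. Expanding \eqref{eq:QT5} therefore yields $\pm\hbar(\alpha_i,\alpha_j)\{\cdot,\cdot\}$, not $\pm\tfrac{\hbar(\alpha_i,\alpha_j)}{2}\{\cdot,\cdot\}$. The same factor appears in \eqref{eq:Y4}. So $\Pi$ must send $\hbar\mapsto 2\bar\hbar$, or one must take $q=e^{\hbar/2}$. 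This normalization issue is shared with the paper's Step (III).
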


The following commutative diagram summarizes the relationship between the quantum toroidal algebra $U_{\hbar}(\mathfrak{g}^{tor})$, its associated graded algebra, and the affine Yangian $Y_{\hbar}(\mathfrak{g})$, as established in Theorem \ref{thm:main} and Proposition \ref{prop:non-quantum}:

\[
\begin{tikzcd}[column sep=large, row sep=large]
U_{\hbar}(\mathfrak{g}^{\mathrm{tor}}) \arrow[r, "\text{filtration } K"] \arrow[d, "\hbar \to 0"'] 
& \mathrm{gr}_K\, U_{\hbar}(\mathfrak{g}^{\mathrm{tor}}) \arrow[r, "\Pi", "\cong"'] & Y_{\hbar}(\mathfrak{g})\arrow[d, "\hbar \to 0"']\\
U(\mathfrak{g}^{\mathrm{tor}}) \arrow[r, "\text{filtration } \kappa"] 
& \mathrm{gr}_\kappa\, U(\mathfrak{g}^{\mathrm{tor}}) \arrow[r, "\pi", "\cong"'] & U(\mathfrak{g}[u])
\end{tikzcd}
\]

\[
\begin{tikzcd}[column sep=large, row sep=large]
Y_{\hbar}(\mathfrak{g}) \arrow[r, "\Pi", "\cong"'] \arrow[d, "\hbar \to 0"'] 
& \mathrm{gr}_K\, U_{\hbar}(\mathfrak{g}^{\mathrm{tor}}) \arrow[d, "\hbar \to 0"'] 
& U_{\hbar}(\mathfrak{g}^{\mathrm{tor}}) \arrow[l, "\text{filtration } K"'] \arrow[d, "\hbar \to 0"'] \\
U(\mathfrak{g}[u]) \arrow[r, "\pi", "\cong"']
& \mathrm{gr}_\kappa\, U(\mathfrak{g}^{\mathrm{tor}}) 
& U(\mathfrak{g}^{\mathrm{tor}}) \arrow[l, "\text{filtration } \kappa"']
\end{tikzcd}
\]

\begin{proof}
We prove in six steps that $\Pi$ is a well-defined isomorphism.

\medskip
\noindent\textbf{Step (I): $\Pi$ preserves relations \eqref{eq:Y1} and \eqref{eq:Y2}.}

Lemma~\ref{lem:K-membership}(1) ensures that $H^{(m)}_{i,0}$ and $X^{(\pm,m)}_{\beta,0}$ lie in $K^m$,
so $\Pi$ is well defined on generators; Lemma~
\ref{lem:K-membership}(2) ensures that the image
in $K^m/K^{m+1}$ is independent of $k$.

Relation \eqref{eq:Y1} follows directly from \eqref{eq:QT1} and \eqref{eq:Phi}.
For \eqref{eq:Y2}, we compute using consecutively the Vandermonde convolution identity
$\sum_{a=0}^p \binom{m}{a}\binom{n}{p-a} = \binom{m+n}{p}$ and relation \eqref{eq:QT4}:
\begin{align*}
  \bigl[X^{(+,m)}_{i,0},\, X^{(-,n)}_{j,0}\bigr]
  &= \sum_{a=0}^m \sum_{b=0}^n (-1)^{m+n-(a+b)} \binom{m}{a}\binom{n}{b}
    [X^+_{i,a},\, X^-_{j,b}] \\
  &= \delta_{ij} \sum_{p=0}^{m+n} (-1)^{m+n-p} \binom{m+n}{p}
    \frac{\Phi^+_{i,p} - \Phi^-_{i,p}}{q - q^{-1}}
  = \delta_{ij}\, H^{(m+n)}_{i,0}.
\end{align*}
 This gives $\Pi([x^+_{i,m}, x^-_{j,n}]) = \Pi(\delta_{ij} h_{i,m+n})$.

\medskip
\noindent\textbf{Step (II): $\Pi$ preserves relations \eqref{eq:Y3} and \eqref{eq:Y4}.}

Relation \eqref{eq:Y3} follows from \eqref{eq:QT3} (equivalently, from the relation $K_i X^\pm_{j,r} K_i^{-1} = q_i^{\pm a_{ij}} X^\pm_{j,r}$ noted in Remark~\ref{rem:Hernandez}) and the specialization $[n]_i \to n$,
$q - q^{-1} \to 2\hbar$ in $K^0/K$.

For \eqref{eq:Y4}, we first establish the following identity in $U_\hbar(\mathfrak{g}^{\mathrm{tor}})$:
for all $k \geq 0$,

\begin{equation}
  \label{eq:key-Phi}
  \begin{aligned}
  (q_i^{a_{ij}/2} + q_i^{-a_{ij}/2})&[\Phi^+_{i,k+1}, X^\pm_{j,l}]
  \mp (q_i^{a_{ij}/2} - q_i^{-a_{ij}/2})\{\Phi^+_{i,k+1}, X^\pm_{j,l}\} \\
  =\ &(q_i^{a_{ij}/2} + q_i^{-a_{ij}/2})[\Phi^+_{i,k}, X^\pm_{j,l+1}]
  \mp (q_i^{a_{ij}/2} - q_i^{-a_{ij}/2})\{\Phi^+_{i,k}, X^\pm_{j,l+1}\},
  \end{aligned}
\end{equation}
which is proved by commuting both sides of relation \eqref{eq:QT5} against $X^\mp_{i,0}$
and using induction on $k$. Multiplying \eqref{eq:key-Phi} by $(q-q^{-1})^{-1}$,
taking its image in $K^{m+n+1}/K^{m+n+2}$, and using $q_i^{\pm a_{ij}/2} \to 1 \pm \frac{\hbar d_i a_{ij}}{2}$
gives
\[
  \overline{\bigl[H^{(m+1)}_{i,0},\, X^{(\pm,n)}_{j,0}\bigr]}
  - \overline{\bigl[H^{(m)}_{i,0},\, X^{(\pm,n+1)}_{j,0}\bigr]}
  = \pm\frac{\hbar(\alpha_i,\alpha_j)}{2}
    \overline{\bigl\{H^{(m)}_{i,0},\, X^{(\pm,n)}_{j,0}\bigr\}},
\]
which is precisely $\Pi$ applied to relation \eqref{eq:Y4}.

\medskip
\noindent\textbf{Step (III): $\Pi$ preserves relation \eqref{eq:Y5}.}

Relation \eqref{eq:QT5} can be rewritten as
\[
  \bigl[X^{(\pm,1)}_{i,k},\, X^{(\pm,0)}_{j,l}\bigr]
  + (1 - q_i^{\pm a_{ij}}) X^{(\pm,0)}_{j,l} X^{(\pm,0)}_{i,k+1}
  = \bigl[X^{(\pm,0)}_{i,k},\, X^{(\pm,1)}_{j,l}\bigr]
  + (q_i^{\pm a_{ij}} - 1) X^{(\pm,0)}_{i,k} X^{(\pm,0)}_{j,l+1}.
\]
 A double induction on $m, n \geq 0$ extend it to the identity involving $X^{(\pm,m)}_{i,k}$
and $X^{(\pm,n)}_{j,l}$ with all terms in $K^{m+n+1}$. Taking the image in
$K^{m+n+1}/K^{m+n+2}$ and specializing $q_i^{\pm a_{ij}} \to 1 \pm \hbar d_i a_{ij}$
gives relation \eqref{eq:Y5} for $\Pi$.

\medskip
\noindent\textbf{Step (IV): $\Pi$ preserves relation \eqref{eq:Y6}.}

By the same induction on $m_1, \ldots, m_r, n \geq 0$ applied to relation \eqref{eq:QT6},
one shows that for $i \neq j$ and $t = n + \sum_{p=1}^r m_p$,
\[
  \sum_{\sigma \in S_r} \sum_{a=0}^{r} (-1)^{a} \binom{r}{a}
  X^{(\pm,m_{\sigma(1)})}_{i,0} \cdots X^{(\pm,m_{\sigma(a)})}_{i,0}\,
  X^{(\pm,n)}_{j,0}\,
  X^{(\pm,m_{\sigma(a+1)})}_{i,0} \cdots X^{(\pm,m_{\sigma(r)})}_{i,0}
  \in K^{t+1},
\]
which gives relation \eqref{eq:Y6} in $\mathrm{gr}_K\, U_\hbar(\mathfrak{g}^{\mathrm{tor}})$.

\medskip
\noindent Steps (I)--(IV) show $\Pi$ is a well-defined homomorphism.
\medskip

\noindent\textbf{Step (V): $\Pi$ is surjective.}

To prove surjectivity,
consider the map $\psi$ of Section~\ref{sec:filtration}. Since $\psi(K^m) \subseteq \kappa^m$,
it induces a surjective graded homomorphism of associated graded algebras
$\bar\psi \colon \mathrm{gr}_K\, U_\hbar(\mathfrak{g}^{\mathrm{tor}}) \twoheadrightarrow
\mathrm{gr}_\kappa\, U(\mathfrak{g}^{\mathrm{tor}})$. The images $\overline{H^{(m)}_{i,0}}$, $\overline{X^{(+,m)}_{i,0}}$, and
$\overline{X^{(-,m)}_{i,0}}$ map under $\bar\psi$ to
$d_i\,\overline{h_i^{(0,m)}}$, $\sqrt{d_j}\,\overline{e_{\alpha_i}^{(0,m)}}$,
and $-\sqrt{d_j}\,\overline{f_{\alpha_i}^{(0,m)}}$, respectively,
which generate $\mathrm{gr}_\kappa\, U(\mathfrak{g}^{\mathrm{tor}})$ by
Proposition~\ref{prop:non-quantum}. Hence the images of $\Pi$ generate
$\mathrm{gr}_K\, U_\hbar(\mathfrak{g}^{\mathrm{tor}})$.

\medskip
\noindent\textbf{Step (VI): $\Pi$ is injective.}

Equip $\mathrm{gr}_K\, U_\hbar(\mathfrak{g}^{\mathrm{tor}})$ with the same total order
$\prec$ on
$\{\overline{H^{(m)}_{i,0}},\, \overline{X^{(\pm,m)}_{\beta,0}} \mid i \in I,\,\beta \in R^+,\,m \in \mathbb{Z}_+\}$
as introduced for $B_\hbar(Y,\prec)$ in Section~\ref{sec:yangian-spanning}.
Denote by $B_K(U, \prec)$ the set of all ordered monomials in these elements.

We claim $B_K(U,\prec)$ is linearly independent over $\mathbb{C}[\hbar]$. The proof proceeds in two stages.

\smallskip
\noindent\textit{Stage 1: $\mathbb{C}$-linear independence modulo $\hbar$.}
Each monomial $\mathfrak{M} \in B_K(U,\prec)$ lying in $K^p/K^{p+1}$
maps under the graded surjection
\[
  \bar\psi \colon \mathrm{gr}_K\, U_\hbar(\mathfrak{g}^{\mathrm{tor}})
  \twoheadrightarrow \mathrm{gr}_\kappa\, U(\mathfrak{g}^{\mathrm{tor}})
  \xrightarrow[\text{Prop.~\ref{prop:non-quantum}}]{\;\sim\;}
  U(\mathfrak{g}[u])
\]
of Step~(V) to a nonzero scalar multiple of the PBW monomial of $U(\mathfrak{g}[u])$
obtained by replacing $\overline{X^{(\pm,m)}_{\beta,0}}$ by $x^\pm_\beta\, u^m$
and $\overline{H^{(m)}_{i,0}}$ by $h_i\, u^m$.
Distinct ordered monomials map to distinct PBW monomials, and the latter are
$\mathbb{C}$-linearly independent by the PBW theorem for $U(\mathfrak{g}[u])$.
Hence the elements of $B_K(U,\prec)$ are
$\mathbb{C}$-linearly independent modulo
$\hbar\,(\mathrm{gr}_K\, U_\hbar(\mathfrak{g}^{\mathrm{tor}}))$.

\smallskip
\noindent\textit{Stage 2: Lifting to $\mathbb{C}[\hbar]$-linear independence.}
Suppose
\begin{equation}
  \label{eq:lin-dep-claim}
 \sum_{\xi}\sum_{a \geq 0}  c_{\xi,a}\,\hbar^a\,\mathfrak{M}_\xi = 0
  \quad\text{in } \mathrm{gr}_K\, U_\hbar(\mathfrak{g}^{\mathrm{tor}}),
\end{equation}
where $c_{\xi,a} \in \mathbb{C}$, all but finitely many zero, and each
$\mathfrak{M}_\xi \in B_K(U,\prec) \cap (K^{p_\xi}/K^{p_\xi+1})$
is homogeneous of filtration degree $p_\xi$.
The product $\hbar^a\,\mathfrak{M}_\xi$ lies in $K^{p_\xi+a}/K^{p_\xi+a+1}$
since $\bar\hbar \in K/K^2$.

Let $T := \min\{a \geq 0 \mid c_{\xi,a} \neq 0
\text{ for some } \xi\}$.
Grouping the terms of \eqref{eq:lin-dep-claim} by total filtration degree,
for each fixed degree $p$ the terms with $a = T$ and $p_\xi = p - T$ give
\[
  \sum_{\xi:\, p_\xi = p-T} c_{\xi,T}\,\hbar^T\,\mathfrak{M}_\xi
  + \sum_{\substack{\xi,\,a > T \\ p_\xi + a = p}}
    c_{\xi,a}\,\hbar^a\,\mathfrak{M}_\xi = 0
  \quad\text{in } K^p/K^{p+1}.
\]
By Corollary~\ref{cor:hbar-nzd}, $\bar\hbar$ is a non-zero-divisor in
$\mathrm{gr}_K\, U_\hbar(\mathfrak{g}^{\mathrm{tor}})$, so we may
cancel the common factor $\hbar^T$:
\[
  \sum_{\xi:\, p_\xi = p-T} c_{\xi,T}\,\mathfrak{M}_\xi
  + \sum_{\substack{\xi,\,a > T \\ p_\xi + a = p}}
    c_{\xi,a}\,\hbar^{a-T}\,\mathfrak{M}_\xi = 0
  \quad\text{in } K^{p-T}/K^{p-T+1}.
\]
Reducing the equation modulo the ideal $\hbar\,(\mathrm{gr}_K\, U_\hbar(\mathfrak{g}^{\mathrm{tor}}))$ 
kills all terms with $a > T$,
leaving
$\sum_{\xi:\, p_\xi = p-T} c_{\xi,T}\,\mathfrak{M}_\xi = 0$.
By Stage~1, $c_{\xi,T} = 0$ for all~$\xi$ with $p_\xi = p - T$.
Since $p$ was arbitrary, $c_{\xi,T} = 0$ for all~$\xi$,
contradicting the minimality of~$T$.
\smallskip

To conclude, let $N = \sum_\xi c_\xi M_\xi$ be a nonzero element of
$\ker \Pi \subset Y_\hbar(\mathfrak{g})$, expressed as a $\mathbb{C}[\hbar]$-linear
combination of elements of $B_\hbar(Y,\prec)$.
By the definition of~$\Pi$, each $\Pi(M_\xi)$ is the corresponding ordered
monomial in $B_K(U,\prec)$. The $\mathbb{C}[\hbar]$-linear independence established above forces $c_\xi = 0$ for all $\xi$. Since $B_\hbar(Y, \prec)$ was already shown to be a spanning set (Proposition \ref{prop:spanning}), this independence implies that $\Pi$ maps a basis to a linearly independent set, hence $\Pi$ is injective.
\end{proof}

\subsection{Corollaries}
\label{sec:corollaries}

\begin{coro}
\label{cor:PBW-yangian}
The set $B_\hbar(Y, \prec)$ of Proposition~\ref{prop:spanning} forms a
$\mathbb{C}[\hbar]$-basis of $Y_\hbar(\mathfrak{g})$.
\end{coro}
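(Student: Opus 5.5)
The corollary should follow formally from Theorem~\ref{thm:main} combined with Proposition~\ref{prop:spanning}. Since spanning is already Proposition~\ref{prop:spanning}, only $\mathbb{C}[\hbar]$-linear independence of $B_\hbar(Y,\prec)$ needs proof, and I would get it by transporting the question through $\Pi$ into $\mathrm{gr}_K\,U_\hbar(\mathfrak{g}^{\mathrm{tor}})$.

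Concretely, take a finite relation $\sum_\xi c_\xi(\hbar)\,M_\xi=0$ in $Y_\hbar(\mathfrak{g})$, with $M_\xi\in B_\hbar(Y,\prec)$ and $c_\xi\in\mathbb{C}[\hbar]$. Applying $\Pi$ gives $\sum_\xi c_\xi(\bar\hbar)\,\Pi(M_\xi)=0$. The key check is that $\Pi(M_\xi)$ is the corresponding ordered monomial in $B_K(U,\prec)$.

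This is where I expect the real work to lie. Theorem~\ref{thm:main} defines $\Pi$ only on the simple generators $x^\pm_{i,m}$ and $h_{i,m}$. For a non-simple $\beta$, one needs $\Pi(x^\pm_{\beta,m})=\overline{X^{(\pm,m)}_{\beta,0}}$, up to a unit. My plan is the following.

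First, show that the Yangian braid operators $\tau_i$ and the toroidal operators $T_i$ are intertwined by $\Pi$ on the relevant graded pieces. The route would be to check that $T_i$ preserves $K$ (it preserves $\hbar U$ and lifts the classical $r_i$, which commutes with $t\mapsto1$), so $T_i$ induces an automorphism $\overline{T_i}$ of $\mathrm{gr}_K$. Then compare $\overline{T_i}\circ\Pi$ and $\Pi\circ\tau_i$ on the degree $0,1$ generators. For this comparison I would use the minimalistic presentation \cite[Theorem 2.13]{GNW18}.

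If that comparison proves too delicate, the fallback is a leading-term version. By \eqref{eq:leading-term}, $x^\pm_{\beta,m}$ agrees modulo $Y^{m-1}_\hbar$ with a nested commutator of simple generators. Its $\Pi$-image then agrees, under $\bar\psi$, with $x^\pm_\beta u^m$ up to a nonzero scalar and lower terms. That is all Stage~1 of Step~(VI) actually uses. Stage~2 uses only Corollary~\ref{cor:hbar-nzd}, so it applies unchanged to the images $\Pi(M_\xi)$.

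With this identification in place, the linear independence of $B_K(U,\prec)$ over $\mathbb{C}[\hbar]$, proved in Step~(VI), forces every $c_\xi=0$. Hence $B_\hbar(Y,\prec)$ is linearly independent, and together with Proposition~\ref{prop:spanning} it is a $\mathbb{C}[\hbar]$-basis.

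Alternatively, since $\Pi$ is an isomorphism, it is enough to verify independence of the images. I would record that this is exactly the statement of Stages~1 and~2, so the corollary is essentially a restatement of Step~(VI) once the identification of $\Pi$ on root vectors is secured.
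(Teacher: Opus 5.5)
Your proposal follows the paper's proof: spanning comes from Proposition~\ref{prop:spanning}, and independence comes from applying $\Pi$ to a relation $\sum_\xi c_\xi M_\xi=0$ and invoking the $\mathbb{C}[\hbar]$-independence established in Step~(VI) of Theorem~\ref{thm:main}. You are also right that the paper's ``by the definition of $\Pi$'' does not by itself identify $\Pi(x^\pm_{\beta,m})$ with $\overline{X^{(\pm,m)}_{\beta,0}}$ for non-simple $\beta$, and your fallback closes that point soundly: it runs Stages~1 and~2 directly on the images $\Pi(M_\xi)$, which needs only that each $\Pi(x^\pm_{\beta,m})$ is homogeneous of degree $m$ (true because $\Pi$ is graded with $\deg\hbar=1$ and the $\tau_i$ preserve this grading) and that $\bar\psi\Pi(x^\pm_{\beta,m})$ is a nonzero multiple of $x^\pm_\beta u^m$.
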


\begin{proof}
By Proposition~\ref{prop:spanning}, $B_\hbar(Y,\prec)$ spans $Y_\hbar(\mathfrak{g})$.
Linear independence follows from Step~(VI) of the proof of Theorem~\ref{thm:main}: if
$\sum_\xi c_\xi M_\xi = 0$ in $Y_\hbar(\mathfrak{g})$, then $\Pi(\sum_\xi c_\xi M_\xi) = 0$
in $\mathrm{gr}_K\, U_\hbar(\mathfrak{g}^{\mathrm{tor}})$, which forces all $c_\xi = 0$
by the linear independence of $B_K(U,\prec)$.
\end{proof}

\begin{coro}
\label{cor:classical-limit}
There is an isomorphism $\mathrm{gr}\, Y_\hbar(\mathfrak{g}) \cong U(\mathfrak{g}[u])$
of graded algebras, and $Y_\hbar(\mathfrak{g}) \cong U(\mathfrak{g}[u])[\hbar]$ as
$\mathbb{C}[\hbar]$-modules.
\end{coro}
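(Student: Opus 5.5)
The plan is to read off both statements from Corollary~\ref{cor:PBW-yangian} together with the surjection $\varpi \colon U(\mathfrak{g}[u]) \twoheadrightarrow \mathrm{gr}\, Y_\hbar(\mathfrak{g})$ of Section~\ref{sec:yangian-def}.

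First I fix the meaning of $\mathrm{gr}\, Y_\hbar(\mathfrak{g})$. As a $\mathbb{C}[\hbar]$-module it carries an $\hbar$-action, and $\varpi$ has domain $U(\mathfrak{g}[u])$, where $\hbar$ does not appear. So the statement is to be read either with $\hbar$ specialized to a nonzero constant (legitimate by Remark~\ref{rem:yangian-isom}), or as $\mathrm{gr}\, Y_\hbar(\mathfrak{g}) \cong U(\mathfrak{g}[u])[\hbar]$ with $\varpi$ extended $\mathbb{C}[\hbar]$-linearly. I will run the argument in the $\mathbb{C}[\hbar]$-linear form and specialize at the end if needed.

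Surjectivity of $\varpi$ is already known, so the content is injectivity. For that I use the PBW monomials of $U(\mathfrak{g}[u])$ in the basis $\{x^\pm_\beta u^m, h_i u^m\}$, ordered by the same total order $\prec$. By \eqref{eq:leading-term}, the monomial $(x^{\pm}_{\beta_1}u^{m_1})\cdots(x^{\pm}_{\beta_r}u^{m_r})$ of total degree $p=\sum m_s$ is sent by $\varpi$, up to a unit in $\mathbb{C}[\hbar]^\times$, to the symbol in $\mathrm{gr}^p$ of the corresponding ordered monomial in $B_\hbar(Y,\prec)$. Now suppose a $\mathbb{C}[\hbar]$-combination of such PBW monomials of degree $p$ lies in $\ker\varpi$. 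Then the matching combination of elements of $B_\hbar(Y,\prec)$ lies in $Y^{p-1}_\hbar$. By Proposition~\ref{prop:spanning}, that element can be rewritten as a combination of monomials in $B_\hbar(Y,\prec)$ of degree at most $p-1$. By Corollary~\ref{cor:PBW-yangian} the expansion in this basis is unique, so all the degree-$p$ coefficients vanish. Hence $\varpi$ is injective degree by degree.

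A subtle point is that $B_\hbar(Y,\prec)$ must be compatible with the filtration: the degree-$\le p$ members must span $Y^p_\hbar$. This follows from the proof of Proposition~\ref{prop:spanning}, since straightening only produces lower-degree error terms. Granting this, $\{\text{ordered monomials of degree}\le p\}$ is a basis of $Y^p_\hbar$. The degree-exactly-$p$ ones then give a basis of $\mathrm{gr}^p$, which yields the graded isomorphism. The main obstacle I expect is precisely this filtered-basis compatibility, including the assertion that the leading constant $c$ in \eqref{eq:leading-term} is genuinely a unit, so that $\varpi$ matches bases.

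For the module statement, Corollary~\ref{cor:PBW-yangian} gives $Y_\hbar(\mathfrak{g})$ free over $\mathbb{C}[\hbar]$ on $B_\hbar(Y,\prec)$. Sending each ordered monomial to the corresponding PBW monomial of $U(\mathfrak{g}[u])$ identifies $Y_\hbar(\mathfrak{g})$ with $U(\mathfrak{g}[u])\otimes\mathbb{C}[\hbar]=U(\mathfrak{g}[u])[\hbar]$. Equivalently, it is the $\mathbb{C}[\hbar]$-module splitting of the filtration obtained from the graded isomorphism.
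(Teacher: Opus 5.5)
Your proposal is correct and takes essentially the paper's route: the paper likewise reads the isomorphism off the PBW basis $B_\hbar(Y,\prec)$ of Corollary~\ref{cor:PBW-yangian}, noting that its monomials have definite degree, so their symbols form a basis of $\mathrm{gr}\,Y_\hbar(\mathfrak{g})$ matching the PBW basis of $U(\mathfrak{g}[u])$; your degree-by-degree injectivity argument for $\varpi$, using the filtered spanning implicit in the proof of Proposition~\ref{prop:spanning}, simply makes that sketch explicit. Your remark that, with $\hbar$ in degree $0$, the graded statement should be read as $\mathrm{gr}\,Y_\hbar(\mathfrak{g}) \cong U(\mathfrak{g}[u])[\hbar]$ (or with $\hbar$ specialized) is a fair correction of the paper's loose formulation.
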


\begin{proof}
Corollary~\ref{cor:PBW-yangian} gives a $\mathbb{C}[\hbar]$-basis of $Y_\hbar(\mathfrak{g})$
consisting of monomials of definite degree in $m$. The associated graded ring thus has
a basis in bijection with the PBW basis of $U(\mathfrak{g}[u])$ (via $x^\pm_{\alpha,m} \mapsto
x^\pm_\alpha u^m$ and $h_{i,m} \mapsto h_i u^m$), and the graded relations are exactly
those of $U(\mathfrak{g}[u])$. The module isomorphism follows immediately.
\end{proof}

\begin{remark}
Theorem~\ref{thm:main} shows that affine Yangians of all untwisted affine Kac-Moody
types arise as limit forms of the corresponding quantum toroidal algebras, in the precise
sense of associated graded algebras. A central difference from the finite-dimensional case
\cite{GM12} is that we use affine indices (i.e., the full index set $I$) throughout, rather
than restricting to $\mathring{I}$. This allows $B_\hbar(Y,\prec)$ to span all central
elements lifted from $\mathfrak{g}[u]$, including those coming from the null root directions.
The framework developed here should also serve as a template for the twisted affine cases,
which we plan to address in future work.
\end{remark}

\section*{Acknowledgments}
Iryna Kashuba acknowledges financial support from the Guangdong Basic and Applied Basic Research
Foundation (grant 2024A1515013079) and the NSFC (grant 12350710787). Hongda Lin is supported by the Postdoctoral Fellowship Program of CPSF under Grant Number GZC20252014. 

During the preparation of this work, the authors used Gemini 2.0 Flash Thinking (developed by Google) to assist in grammar and orthography correction. All mathematical proofs and theoretical derivations were performed by humans. Following the use of the tool, the authors reviewed and edited the content and assume full responsibility for the final content of this text, ensuring its scientific integrity and originality.

Data Availability: The manuscript has no associated data.

\end{document}